\numberwithin{equation}{section}
\newtheorem{theorem}{Theorem}
\newtheorem*{remark}{Remark}
\newtheorem{corollary}{Corollary}
\newtheorem{prop}{Proposition}
\def\ad{\operatorname{ad}}
\def\const{\operatorname{const}}
\def\spann{\operatorname{span}}
\def\sgn{\operatorname{sgn}}
\def\Id{\operatorname{Id}}
\def\card{\operatorname{card}}
\def\cut{\operatorname{cut}}
\def\ds{\displaystyle}
\newcommand{\eq}[1]{$(\protect\ref{#1})$}
\newcommand{\be}[1]{\begin{equation}\label{#1}}
\newcommand{\ee}{\end{equation}}
\renewcommand\Vec{\operatorname{Vec}}
\newcommand{\onefiglabelsize}[4]
{
\begin{figure}[htbp]
\begin{center}
\includegraphics[width=#4\textwidth]{#1}
\\
\parbox[t]{#4\textwidth}{\caption{#2}\label{#3}}
\end{center}
\end{figure}
}
\newcommand{\twofiglabelsize}[8]
{
  \begin{minipage}{\linewidth}
 \centering
      \begin{minipage}{0.45\linewidth}
          \begin{figure}[H]
					 \centering
              \includegraphics[width=#4\linewidth]{#1}
              \caption{#2}\label{#3}
          \end{figure}
      \end{minipage}
      \hspace{0.05\linewidth}
      \begin{minipage}{0.45\linewidth}
          \begin{figure}[H]
					 \centering
              \includegraphics[width=#8\linewidth]{#5}
              \caption{#6}\label{#7}
          \end{figure}
      \end{minipage}
 \end{minipage}
}
\title{Sub-Finsler geodesics on the Cartan group\footnote{Sections 1, 2 and 6 of the paper are written by E. Le Donne, 
and Sections 3--5 and 7, 8 are written by A. Ardentov  and Yu. Sachkov. 
The work of A. Ardentov  and Yu. Sachkov is supported by the Russian Science Foundation 
under grant 17-11-01387 and performed in Ailamazyan Program Systems Institute 
of Russian Academy of Sciences.
E. Le Donne was partially supported by the Academy of Finland (grant
288501
`\emph{Geometry of subRiemannian groups}')
and by the European Research Council
 (ERC Starting Grant 713998 GeoMeG `\emph{Geometry of Metric Groups}').}}
\author{A.Ardentov\footnote{Program Systems Institute, Pereslavl-Zalessky, Russia, \tt{aaa@pereslavl.ru}}, E. Le Donne\footnote{Department of Mathematics and Statistics, P.O. Box 35,
FI-40014, University of Jyv\"askyl\"a, Finland,
\tt{ledonne@msri.org}}, 
Yu. Sachkov\footnote{Program Systems Institute, Pereslavl-Zalessky, Russia, \tt{yusachkov@gmail.com}}}
\begin{document}

\maketitle


\begin{abstract}
This paper is a continuation of the work by the same authors on the Cartan group equipped with the sub-Finsler  $\ell_\infty$ norm.
We start by giving a  detailed presentation of the structure of bang-bang extremal trajectories.
Then we prove upper bounds on the number of switchings on bang-bang minimizers.
We prove that any normal extremal is either bang-bang, or singular, or mixed. 
Consequently, we study   mixed extremals.
In particular, we prove that every two points can be connected by a piecewise smooth
minimizer, and we give a uniform bound on the number of such pieces.
\end{abstract}

\section{Introduction}
There are several motivations for studying sub-Finsler geometry on Lie groups, especially in geometric group theory and in harmonic analysis.
We only mention the prominent articles \cite{pansu,cowlingmartini13, Breuillard-LeDonne1} and then we refer to the introductions of \cite{bbds, SFCartan1} for a broad explanation of
the reasons and for several references of the state-of-the-art.

On the one hand, as in sub-Riemannian geometry, distributions of step 2 are easier to study and there is already some good understanding of the lower dimensional cases, see  \cite{bbds}. On the other hand, sub-Finsler structures defined by smooth norms have a similar theory that in the sub-Riemannian case. For these reasons the challenge is to study step-3 sub-Finsler groups with a non-strictly convex norm. The lower dimensional examples are the Engel group and the Cartan group, which both have step 3 and rank 2. 

In this paper we study the Cartan group, since it is the  free-nilpotent group of rank 2 and step 3 (so the Engel group is a quotient of this group), equipped with the   $\ell_\infty$ sub-Finsler   structure.
In our previous paper~\cite{SFCartan1},     adopting the point of view of   time-optimal control theory, we 
  characterized extremal curves via Pontryagin maximum principle,  we described abnormal and singular arcs, and we constructed the bang-bang flow.
 
 The Cartan  distribution can be expressed   by the span of two vector fields $X_1$, $X_2$.
We consider the $\ell_\infty$   norm with respect to   $X_1$, $X_2$.  Hence, every admissible trajectory is characterized by two controls.
A summary of the results of this paper is given by the following statements.

\begin{theorem}\label{th:result}
In the $\ell_\infty$ sub-Finsler structure on the Cartan group
 the length-minimizing  trajectories are of three  not-mutually-exclusive  types:
\begin{itemize}
\item[(i)] one component of the control is constantly equal to $1$ or $-1$, 
\item[(ii)]  bang-bang trajectory,
\item[(iii)] piecewise smooth concatenation of trajectories of types (i) and (ii). 
\end{itemize}
The length-minimizers  that are of  type (ii) but not of type (i) have at most $12$ arcs.
The length-minimizers  of type (iii) have at most $14$ arcs.
All curves of type (i) are length-minimizers.
 Moreover,  for every trajectory of type (i) there exists a piecewise-smooth length-minimizing trajectory connecting the same two points and having at most $5$ smooth pieces.
\end{theorem}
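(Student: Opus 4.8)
The plan is to assemble the statement from the structural results of the paper, organised around the Pontryagin Maximum Principle (PMP) and the geometry of the $\ell_\infty$ unit ball. Recall from~\cite{SFCartan1} that the maximised Hamiltonian is $H=|h_1|+|h_2|$, where $h_i=\langle\lambda,X_i\rangle$ are the fibrewise-linear functions dual to $X_1,X_2$; along an arc-length parametrised extremal $H\equiv\const$, and for a length-minimiser this constant is positive. The switching functions obey the vertical subsystem $\dot h_1=-u_2h_3$, $\dot h_2=u_1h_3$, $\dot h_3=u_1h_4+u_2h_5$, $\dot h_4=\dot h_5=0$ (the Cartan algebra being step $3$). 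First I would prove the trichotomy: on a maximal arc, either both $h_1,h_2$ have only isolated zeros, so $u_i=\sgn h_i\in\{\pm1\}$ and the arc is bang-bang (type (ii)); or one of them, say $h_2$, vanishes identically, in which case $\dot h_2=u_1h_3\equiv 0$ forces $h_3\equiv 0$ (since $h_1\neq 0$ on a minimiser, hence $u_1=\pm1$), and then $\dot h_3=u_1h_4+u_2h_5\equiv 0$ together with $\dot h_1\equiv 0$ keeps $h_1$, hence $u_1$, constantly $\pm1$, i.e.\ the arc is of type (i). A general normal extremal is a concatenation of arcs of these two kinds, which is exactly type (iii). Combined with the fact (from~\cite{SFCartan1}, revisited below) that every length-minimiser is a normal extremal — the genuinely abnormal minimisers falling among the type (i) curves — this yields the classification (i)/(ii)/(iii).

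For the arc-count bounds I would proceed as follows. For pure bang-bang minimisers: (a) use the discrete symmetries of the problem — the four reflections of the $\ell_\infty$ square, the dilation flow, and time reversal — to normalise the initial covector and shorten the list of switching patterns; (b) integrate the bang-bang flow of Section~2 in closed form through each stratum of the switching sequence; (c) apply the two optimality tests, absence of conjugate points (second variation/Jacobi) and the Maxwell condition (no two distinct extremals of equal length meet before the terminal time), to show that a bang-bang extremal with $13$ or more arcs must carry a conjugate or Maxwell point strictly inside its interval, hence is not optimal. Bookkeeping of the admissible switching sequences then gives the sharp value $12$ for minimisers of type (ii) that are not of type (i). For type (iii), a mixed minimiser is a bang-bang trajectory into which type (i) arcs may be inserted; the same optimality analysis limits the number and location of such insertions, and combining this with the bang-bang count produces the bound $14$.

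That \emph{every} curve of type (i) is length-minimising I would establish by an explicit calibration. Take the left-invariant $1$-form $\omega$ with $\omega(X_1)=1$, vanishing on $X_2$ and on all higher brackets. Since $d\omega(Y,Z)=-\omega([Y,Z])$ for left-invariant data and every bracket of the $X_i$ lies in the span on which $\omega=0$, the form $\omega$ is closed, hence exact on the simply connected Cartan group, $\omega=df$; its dual $\ell_\infty$-norm is $\sup\{\,a:\ \max(|a|,|b|)\le 1\,\}=1$, so $f$ is $1$-Lipschitz for the sub-Finsler distance. Along any type (i) trajectory $\gamma$ with $u_1\equiv 1$ one has $\tfrac{d}{dt}f(\gamma)=\omega(X_1+u_2X_2)=1$, hence $f(\gamma(T))-f(\gamma(0))=T=\mathrm{length}(\gamma)\ge d(\gamma(0),\gamma(T))$, forcing equality; so $\gamma$ is length-minimising (the case $u_1\equiv-1$ is symmetric). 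For the reduction to few pieces, fix such a $\gamma$ with $u_1\equiv 1$ on $[0,T]$; keeping $u_1\equiv 1$ and varying only $u_2$, the endpoint on the $5$-dimensional Cartan group is governed by four scalar functionals of $u_2$: three linear ones $\int_0^T\phi_k(t)\,u_2(t)\,dt$ whose weights $\phi_k$ form a Chebyshev system built from $1,t,t^2$, and one quadratic functional coming from the bracket $[X_2,[X_1,X_2]]$. A standard moment-space (bang-bang) argument then produces a $\{\pm1\}$-valued control $\tilde u_2$ with at most four switches reproducing all four values — four switch times sufficing to satisfy the three linear equations and to absorb the remaining constraint — hence a trajectory with at most five smooth pieces reaching the same endpoint, which by the calibration above is again length-minimising. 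Assembling these items yields the theorem.

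The genuine work is in steps (b)--(c) of the arc count: the explicit integration of the bang-bang flow through all switching strata and the delicate localisation of the first conjugate and Maxwell points, which is where essentially all the technical difficulty lies; by contrast the trichotomy and the two statements about type (i) curves are comparatively soft once the PMP picture of~\cite{SFCartan1} is in hand.
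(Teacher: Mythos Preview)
Your overall architecture matches the paper's, and your calibration proof that type~(i) curves are minimising is correct (it is essentially Lemma~2 of~\cite{SFCartan1}); your moment-space sketch for the $5$-piece reduction is also on the right track. There is, however, a genuine gap in the bang-bang arc count. You assert that any bang-bang extremal with $13$ or more arcs carries a conjugate or Maxwell point and is therefore non-optimal. This is false as stated: in the low-energy regime $\min(-|h_4|,-|h_5|)<E\le\max(-|h_4|,-|h_5|)$ the bang-bang trajectories are optimal for \emph{all} time, with arbitrarily many switchings (Theorem~\ref{th:opt_low_en}). What rescues the theorem is that these low-energy bang-bang trajectories are simultaneously of type~(i) (one control component stays identically $+1$), so they are excluded by the clause ``type~(ii) but not of type~(i)''. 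The paper therefore splits the analysis: low-energy bang-bang are handled as type~(i); only in the complementary regime $E>\max(-|h_4|,-|h_5|)$ is a second-order test applied. That test is not a generic conjugate-point or Maxwell argument but the Agrachev--Gamkrelidze necessary condition for bang-bang extremals (Theorem~\ref{th:agr_gam}), whose hypothesis---uniqueness of the extremal lift up to scale---fails precisely in the low-energy case (see the remark after Theorem~\ref{th:opt_low_en}), so the split is not optional. The bound itself then comes from explicit symbolic computation of the quadratic form $Q|_W$ in each stratum of Section~\ref{sec:structure}, checking via minors (Theorem~\ref{th:gant}) that it is not negative-semidefinite once enough switchings are present.

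A secondary gap: your trichotomy argument does not establish that a normal extremal decomposes into \emph{finitely} many bang and singular arcs; you pass directly from ``on a maximal arc'' to ``a general normal extremal is a concatenation''. The paper (Theorem~\ref{th:gen_controls}) rules out accumulation of switching times using the uniform positive lower bound $\tau_i=\tau_i(h_4,h_5,E)$ on interior bang-arc durations (Corollary~\ref{cor:tau}), which in turn is extracted from the exhaustive case analysis of Section~\ref{sec:structure}. Without that ingredient the finiteness of the concatenation, and hence the very definition of type~(iii), is not secured.
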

 
As a corollary, we deduce that
any pair of points  
can  be connected by an optimal piecewise-smooth trajectory with at most 14 arcs.

The paper has the following structure.
In Sec. 2 we recall the problem statement and the main results on it obtained in previous paper~\cite{SFCartan1}.
Section 3 is devoted to detailed study of structure of bang-bang extremal trajectories implied by Pontryagin Maximum Principle.
In Sec. 4 we prove upper bounds on the number of switchings on bang-bang minimizers. In Sec. 5 we prove that any normal extremal is either bang-bang, or singular, or mixed. Further, Sec. 6 is devoted to the study of mixed extremals, including upper bound on the number of switchings. Finally, in Sec. 7 we obtain a uniform bound on the number of smooth pieces on minimizers connecting arbitrary points in the Cartan group.

\section{Problem statement and previous results}\label{sec:problem}
Consider the 5-dimensional free nilpotent  Lie algebra with 2 generators, of step 3. 
There exists a basis 
$L = \spann (X_1, \dots, X_5)$ in which the product rule in $L$ takes the form
$$
[X_1, X_2] = X_3, \quad [X_1, X_3] = X_4, \quad [X_2, X_3] = X_5, \quad \ad X_4 = \ad X_5 = 0.
$$
The Lie algebra $L$ is
called the Cartan algebra, and the corresponding 
 connected simply connected Lie group  $M$ is called the Cartan group. We will use the following model:
$$ M = \mathbb{R}^5_{x,y,z,v,w}, $$
with the Lie algebra $L$ modeled by left-invariant vector fields on $\mathbb{R}^5$
\begin{align*}
&X_1 = \frac{\partial}{\partial x} - \frac y2 \frac {\partial}{\partial z} - \frac{x^2 + y^2}{2} \frac{\partial}{\partial w}, \\
&X_2 = \frac{\partial}{\partial y} + \frac x2 \frac {\partial}{\partial z} + \frac{x^2 + y^2}{2} \frac{\partial}{\partial v}, \\
&X_3 = \frac{\partial}{\partial z} + x \frac{\partial}{\partial v} + y \frac{\partial}{\partial w}, \\
&X_4 = \frac{\partial}{\partial v}, \\
&X_5 = \frac{\partial}{\partial w}.
\end{align*}
The product rule in the Cartan group $M$ in this model is given in \cite{dido_exp}.

Left-invariant $\ell_{\infty}$ sub-Finsler problem on the Cartan group is stated as the following time-optimal problem:
\begin{align}
&\dot{q} = u_1 X_1 + u_2 X_2, \quad q \in M, \quad u \in U = \{ u \in \mathbb{R}^2 \mid  {\lVert u \rVert}_\infty \le 1 \}, \label{sys}\\
\nonumber
&\lVert u \rVert_\infty = \max (|u_1|, |u_2|), \\
&q(0) = q_0 = \Id = (0, \dots, 0), \quad q(T) = q_1, \label{bound}\\
&T \to \min. \label{T}
\end{align}

Problem~\eq{sys}--\eq{T} was considered first in paper~\cite{SFCartan1}. We recall the main results of that paper.
 
Existence of optimal controls follows from Rashevsky-Chow and  Filippov theorem \cite{notes}.

Pontryagin Maximum Principle implies that optimal abnormal controls are constant.

Introduce linear-on-fibers Hamiltonians $h_i (\lambda) = \langle \lambda, X_i \rangle$, $\lambda \in T^*M$, $i = 1, \dots, 5$.
A normal extremal arc $\lambda_t, t \in I = (\alpha, \beta) \subset [0, T]$ is called:
\begin{itemize}
\item a bang-bang arc if
$$ \card \{ t \in I \mid h_1 h_2 (\lambda_t) = 0 \} < \infty, $$
\item a singular arc if one of the condition holds:
\begin{align*}
&h_1 (\lambda_t) \equiv 0, \quad t \in I \quad (\textrm{$h_1$-singular arc}), \text{ or}\\
&h_2 (\lambda_t) \equiv 0, \quad t \in I \quad (\textrm{$h_2$-singular arc}),
\end{align*}
\item a mixed arc if it consists of a finite number of bang-bang and singular arcs.
\end{itemize}
 
Singular controls have one of components constantly equal to 1 or $-1$, thus they are optimal. The fix-time attainable set along singular trajectories was explicitly described and was shown to be semi-algebraic.

Bang-bang extremal trajectories satisfy the Hamiltonian system 
with the Hamiltonian function $H = |h_1| + |h_2|$:
\begin{equation}\label{Ham_bang}
\begin{cases} 
\dot{h}_1 = -s_2 h_3, \\
\dot{h}_2 = s_1 h_3, \\
\dot{h}_3 = s_1 h_4 + s_2 h_5, \\
\dot{h}_4 = \dot{h}_5 = 0, \\
\dot{q} = s_1 X_1 + s_2 X_2.
\end{cases}
\end{equation}

The dual of the Lie algebra $L^* = T_{\Id}^*M$ has Casimir functions $h_4$, $h_5$, $E = \frac{h_3^2}{2} + h_1 h_5 - h_2 h_4$, thus Hamiltonian system \eqref{Ham_bang} has integrals $h_4$, $h_5$, $E$, and $H$.

The mapping $(\lambda, q) \mapsto (k\lambda, q), k> 0$, preserves extremal trajectories, thus  we can consider only the reduced case 
\begin{gather*}
H(\lambda) \equiv 1.
\end{gather*} 

With the use of the coordinate 
$\theta \in S^1 = \mathbb{R}/2\pi \mathbb{Z}$:
$$h_1 = \sgn (\cos \theta) \cos^2 \theta, \quad h_2 = \sgn (\sin \theta) \sin^2 \theta,$$
the vertical part of Hamiltonian system \eq{Ham_bang} reduces to the following system:
\begin{equation}\label{Hamtheta}
\begin{cases} 
\dot{\theta} = \frac{h_3}{|\sin 2\theta|}, \quad \theta \ne \frac{\pi n}{2}, \\
\dot{h}_3 = s_1 h_4 +s_2 h_5, \quad s_1 = \sgn \cos \theta, \quad s_2  = \sgn \sin \theta.
\end{cases}
\end{equation}

System~\eq{Hamtheta} is preserved by the group of symmetries of the square 
$\{ (h_1, h_2) \in \mathbb{R}^2  \mid  |h_1| + |h_2| = 1 \}$.  
Thus in the study of system \eq{Hamtheta} we can restrict ourselves by the case $h_4 \geq h_5 \geq 0$. This case obviously decomposes into the following sub-cases:
\begin{enumerate}
\item[$1)$] $h_4 > h_5 >0$,
\item[$2)$] $h_4 > h_5 = 0$,
\item[$3)$] $h_4 = h_5 > 0$,
\item[$4)$] $h_4 = h_5 = 0$.
\end{enumerate}

\section{Structure of bang-bang trajectories}\label{sec:structure}
In this section we consider, case by case, the structure of bang-bang trajectories implied by Pontryagin Maximum Principle.

\subsection{Case $1)$}
Let $h_4 > h_5 > 0$. Then system~\eq{Hamtheta} has the phase portrait given in Fig.~\ref{fig:1)h3th}, see Subsubsec.~7.2.1~\cite{SFCartan1}.

The domain $\{ \lambda \in C \mid h_4 > h_5 > 0 \}$ of the cylinder $C = L^* \cap \{H = 1\}$ admits a decomposition defined by the energy integral $E$:
\begin{align*}
&\{ \lambda \in C \mid h_4 > h_5 > 0 \} = \cup_{i=1}^8 C_i, \\
&C_1 = E^{-1} (-h_4),  \qquad C_2 = E^{-1} (-h_4, -h_5), \qquad C_3 = E^{-1} (-h_5), \qquad C_4 = E^{-1} (-h_5, h_5),\\ 
&C_5 = E^{-1} (h_5),  \qquad C_6 = E^{-1} (h_5, h_4), \qquad C_7 = E^{-1} (h_4), \qquad C_8 = E^{-1} (h_4, +\infty).
\end{align*}

\onefiglabelsize
{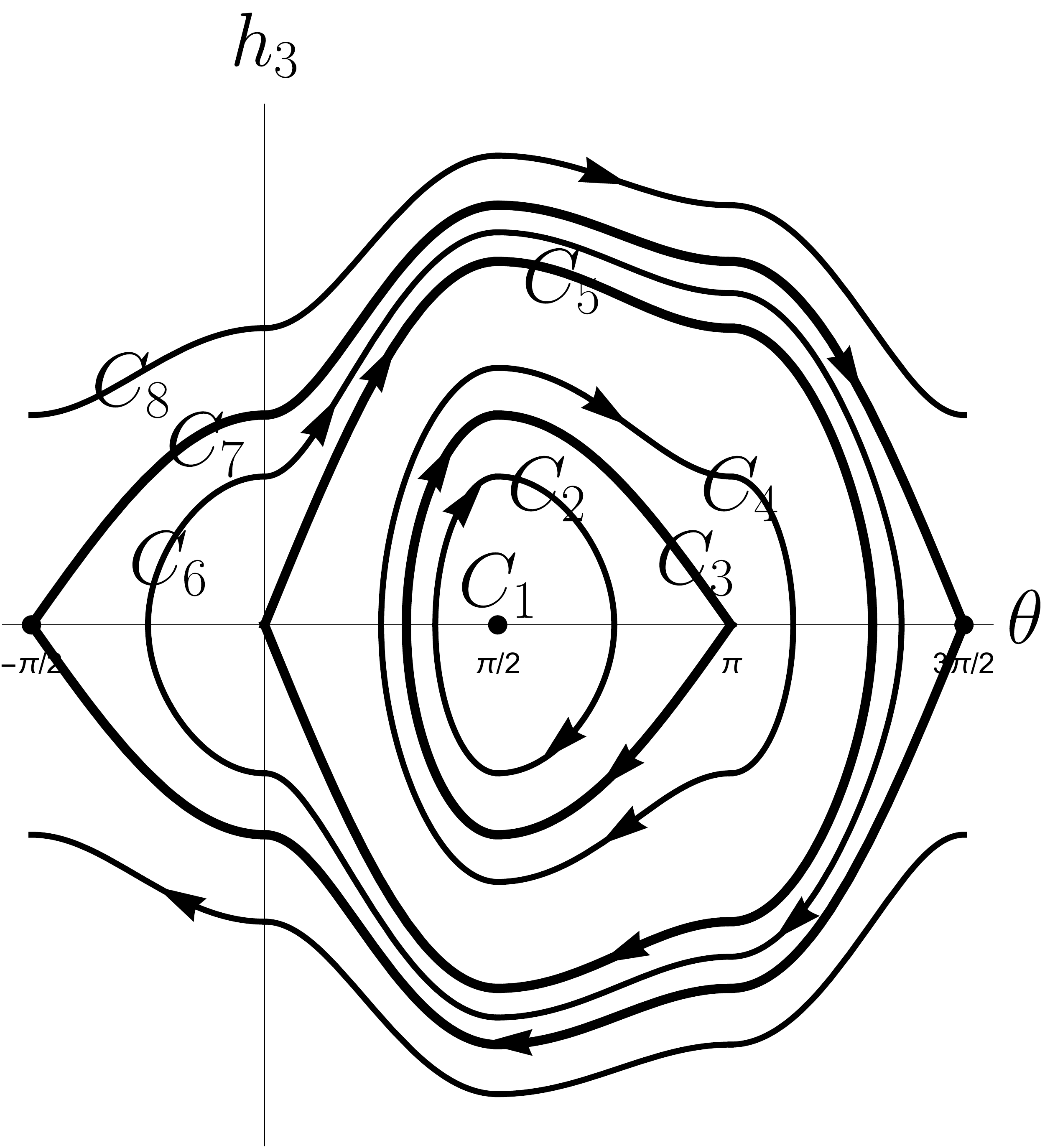}{Phase portrait of system \eq{Hamtheta} in case 1)}{fig:1)h3th}{0.3}

\subsubsection{Case $1)$, level set $C_1$}
Let $h_4 > h_5 > 0$, $E = -h_4$. Then $\theta \equiv \frac{\pi}{2}, h_3 \equiv 0$. Thus $h_1 (\lambda_t) \equiv 0$, $h_2 (\lambda_t) \equiv 1$, i.e., the extremal $\lambda_t$ is $h_1$-singular and not bang-bang.

\subsubsection{Case $1)$, domain $C_2$} \label{subsec:1)I1}
Let $h_4 > h_5 > 0$, $E \in (-h_4, -h_5)$. The corresponding bang-bang extremal is shown in Figs.~\ref{fig:1)I1}, \ref{fig:xy1)C2}.

\twofiglabelsize
{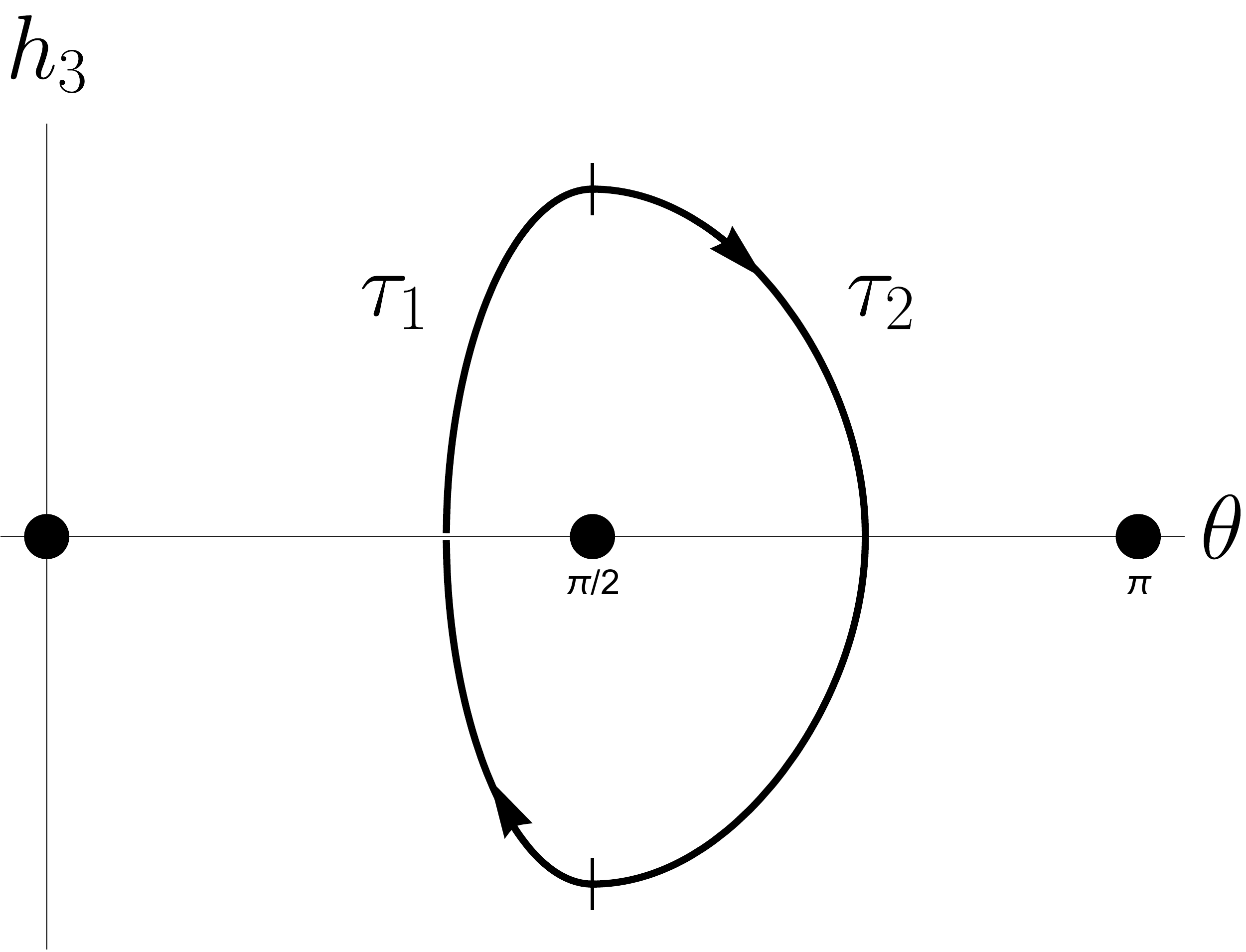}{$(\theta(t), h_3(t))$: Case 1), domain $C_2$}{fig:1)I1}{0.6}
{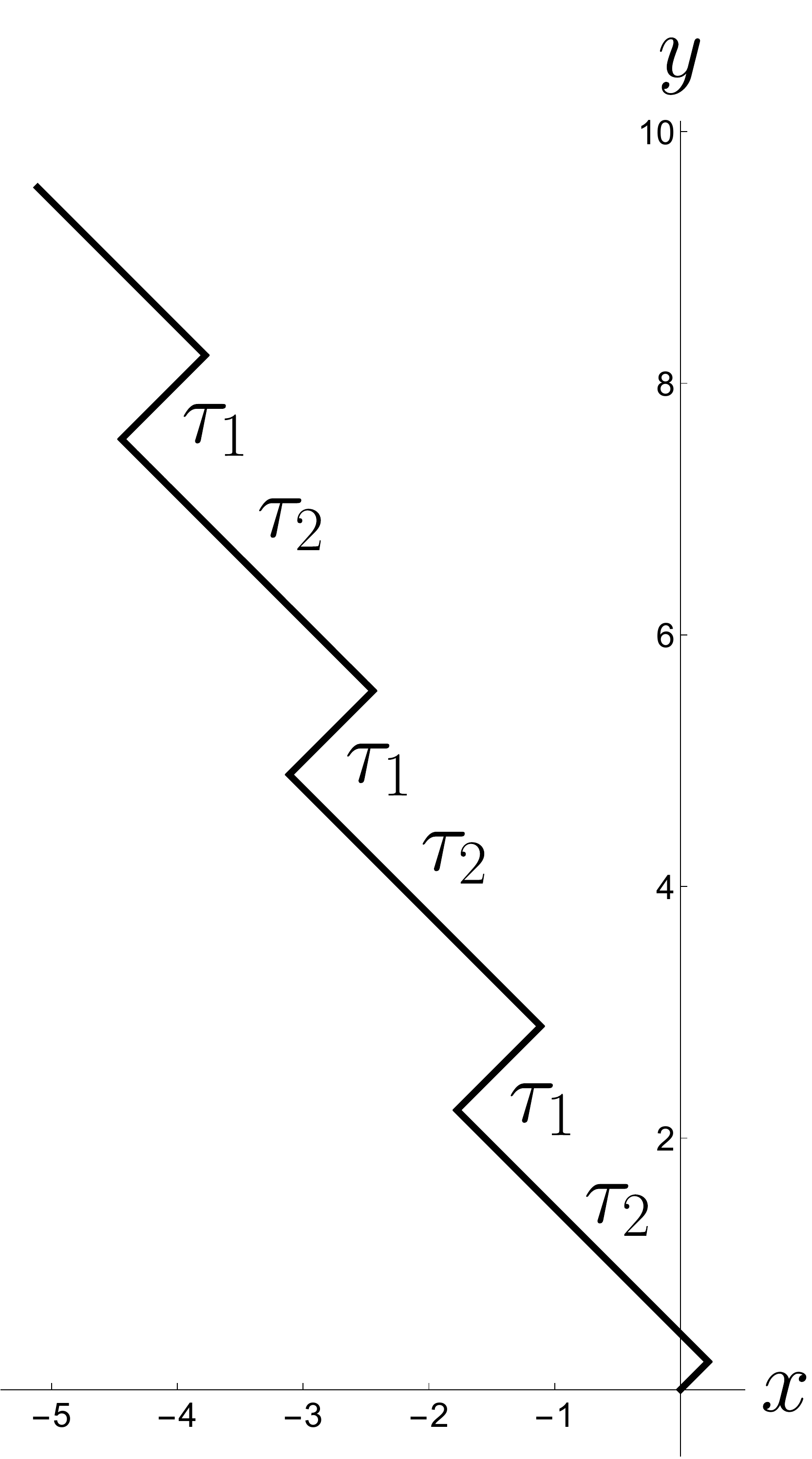}{$(x(t), y(t))$: Case 1), domain $C_2$}{fig:xy1)C2}{0.4}

Denote by $\tau_1, \tau_2$, the time intervals
between successive switchings of control: 
\begin{gather*}
\theta (0) = \frac{\pi}{2}, \quad \theta (t) \in \left(0, \frac{\pi}{2}\right) \quad \textrm{for} \quad t \in (0, \tau_1), \quad \theta (\tau_1) = \frac{\pi}{2}, \\
\theta (0) = \frac{\pi}{2}, \quad \theta (t) \in \left(\frac{\pi}{2}, \pi\right) \quad \textrm{for} \quad t \in (0, \tau_2), \quad \theta (\tau_2) = \frac{\pi}{2}.
\end{gather*}
Compute the interval $\tau_1$ via the Casimirs $E$, $h_4$, $h_5$:
\begin{align*}
&\theta \in \left[0, \frac{\pi}{2}\right] \Rightarrow E = \frac{h_3^2}{2} + h_5 \cos^2 \theta - h_4 \sin^2 \theta, \\
&\theta = \frac{\pi}{2} \Rightarrow E = \frac{h_3^2}{2} - h_4, \quad h_3 = \pm \sqrt{2(E+h_4)},\\
&h_3(0) = -\sqrt{2(E+h_4)}, \quad h_3 (\tau_1) = \sqrt{2(E+h_4)}, \\
&h_3(t) = h_3(0) + (h_4 + h_5)t,\\
&\tau_1 = \frac{h_3(\tau_1) - h_3(0)}{h_4 + h_5} = \frac{2\sqrt{2(E+h_4)}}{h_4 + h_5}.
\end{align*}
We compute similarly $\ds\tau_2 = \frac{2\sqrt{2(E+h_4)}}{h_4 - h_5} > \tau_1$. The rule of switchings of controls $u_i (t) = s_i = \sgn h_i (t)$, $i = 1,2$, follows from the phase portrait in Fig.~\ref{fig:1)I1}. Thus we obtain a general form of
a bang-bang trajectory in the case 1), $C_2$.

\begin{prop}
In case $1)$, domain $C_2: h_4 > h_5 > 0, E \in (-h_4, -h_5)$, a bang-bang control is obtained by choosing a finite segment of the following two-side infinite periodic sequence:
\begin{table}[h]
\begin{center}
\begin{tabular}{cccccccc}
$(u_1, u_2):$ & $\quad$ & $\dots$ & $(+, +)$ & $(-, +)$ & $(+, +)$ & $(-, +)$ & \dots \\
& & & $\tau_1$ & $\tau_2$ & $\tau_1$ & $\tau_2$ &
\end{tabular}
\end{center}
\end{table}

Duration of all segments of constancy of controls (except the first and the last ones) is equal to
$$ \tau_1 = \frac{2\sqrt{2(E+h_4)}}{h_4 + h_5}, \qquad \tau_2 = \frac{2\sqrt{2(E+h_4)}}{h_4 - h_5} > \tau_1.$$
The first and the last segments may take arbitrary values in the corresponding intervals $(0, \tau_i]$.
\end{prop}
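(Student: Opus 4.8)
The plan is to obtain the statement by reading it off the phase portrait of the reduced vertical subsystem \eqref{Hamtheta} in case~1), domain $C_2$ (recalled from \cite{SFCartan1}), combined with the one-line integration of $h_3$ carried out just before the statement. First I would use that $E$ is an integral of \eqref{Ham_bang}: a bang-bang extremal with parameters $h_4>h_5>0$, $E\in(-h_4,-h_5)$ stays on one connected component of the level set $\{E=\const\}$ in the cylinder $C$, and by the phase portrait (Fig.~\ref{fig:1)h3th}, Fig.~\ref{fig:1)I1}) this component is a single closed curve around $(\theta,h_3)=(\pi/2,0)$ lying strictly in the strip $\{0<\theta<\pi\}$; indeed its turning points, where $h_3=0$, satisfy $\sin^2\theta=(h_5-E)/(h_4+h_5)$ on the side $\theta<\pi/2$ and $\sin^2\theta=-(E+h_5)/(h_4-h_5)$ on the side $\theta>\pi/2$, both in $(0,1)$ for $E\in(-h_4,-h_5)$, so the orbit never reaches $\theta\in\{0,\pi\}$. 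Consequently $s_2=\sgn\sin\theta\equiv+1$ along the whole extremal, while $s_1=\sgn\cos\theta$ is $+1$ on the sub-arc $\theta\in(0,\pi/2)$ and $-1$ on the sub-arc $\theta\in(\pi/2,\pi)$; the control switches precisely at the transversal, finite-time crossings of $\{\theta=\pi/2\}=\{h_1=0\}$, which occur twice per period, alternating between the two sub-arcs. This forces the control to be a finite piece of the pattern $\dots(+,+)(-,+)(+,+)(-,+)\dots$.

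Next I would pin the durations. On a maximal $(+,+)$ sub-arc, $\dot h_3=h_4+h_5>0$, so $h_3$ is strictly increasing; at its two endpoints $\theta=\pi/2$, where the integral gives $h_3=\pm\sqrt{2(E+h_4)}$, and monotonicity forces $h_3$ to run from $-\sqrt{2(E+h_4)}$ to $+\sqrt{2(E+h_4)}$, so the sub-arc lasts exactly $\tau_1=2\sqrt{2(E+h_4)}/(h_4+h_5)$ — this is the computation already displayed. Symmetrically, on a maximal $(-,+)$ sub-arc $\dot h_3=-(h_4-h_5)<0$, $h_3$ runs from $+\sqrt{2(E+h_4)}$ down to $-\sqrt{2(E+h_4)}$, so the sub-arc lasts $\tau_2=2\sqrt{2(E+h_4)}/(h_4-h_5)$, and $0<h_4-h_5<h_4+h_5$ gives $\tau_2>\tau_1$. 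Since a $(+,+)$ block is always the $\theta\in(0,\pi/2)$ sub-arc and a $(-,+)$ block the $\theta\in(\pi/2,\pi)$ sub-arc, the durations align with the pattern as tabulated.

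For the first and last blocks I would observe that the extremal may be started at an arbitrary point $\lambda_0$ of the closed orbit: if $\lambda_0$ lies on the $(+,+)$ sub-arc, the first block is $(+,+)$ and its length is the time from $\lambda_0$ to the next crossing of $\{h_1=0\}$, which sweeps out all of $(0,\tau_1]$ as $\lambda_0$ ranges over the orbit (the value $\tau_1$ corresponding to $\lambda_0$ on $\{h_1=0\}$), and analogously for a start on the $(-,+)$ sub-arc and for the terminal block. Conversely, any finite segment of the pattern with interior durations $\tau_1,\tau_2$ and first/last durations in $(0,\tau_i]$ is realised by integrating \eqref{Ham_bang} from a suitable $\lambda_0$ on the orbit, so the description is sharp in both directions.

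I do not expect a genuine obstacle: the proposition is a bookkeeping consequence of the already-established phase portrait together with the integration of $h_3$. The only points needing a little care are (i) verifying that the $C_2$-orbit stays away from $\theta\in\{0,\pi\}$, so that $u_2$ never switches and the control alphabet is exactly two letters; (ii) invoking from \cite{SFCartan1} that the bang-bang flow \eqref{Ham_bang} crosses the switching surface $\{h_1=0\}$ in finite time and transversally, even though the reduced field \eqref{Hamtheta} is singular there, which is what makes "the time between consecutive switchings" meaningful; and (iii) being precise about the half-open intervals $(0,\tau_i]$ — open at $0$ because a zero-length block is not a block, closed at $\tau_i$ because $\lambda_0$ is allowed to sit on the switching surface.
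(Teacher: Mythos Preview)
Your proposal is correct and follows essentially the same approach as the paper: read the switching pattern off the phase portrait of~\eqref{Hamtheta} (the $C_2$ orbit encircles $(\theta,h_3)=(\pi/2,0)$ and stays in $\{0<\theta<\pi\}$, so $u_2\equiv+1$ and only $u_1$ switches), and obtain the durations by integrating $\dot h_3=s_1h_4+s_2h_5$ between the values $h_3=\pm\sqrt{2(E+h_4)}$ at $\theta=\pi/2$. You are simply more explicit than the paper about the turning-point verification and the boundary cases for the first/last blocks.
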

The subsequent analysis of the structure of bang-bang trajectories is completely analogous to the preceding one, thus we omit analogous computations and arguments in the following subsubsections.

\subsubsection{Case $1)$, level line $C_3$}
\label{subsec:1)C2}

Let $h_4 > h_5 > 0$, $E = -h_5$, the corresponding extremal is shown in Figs.~\ref{fig:1)C2}, \ref{fig:xy1)C3}.

\twofiglabelsize
{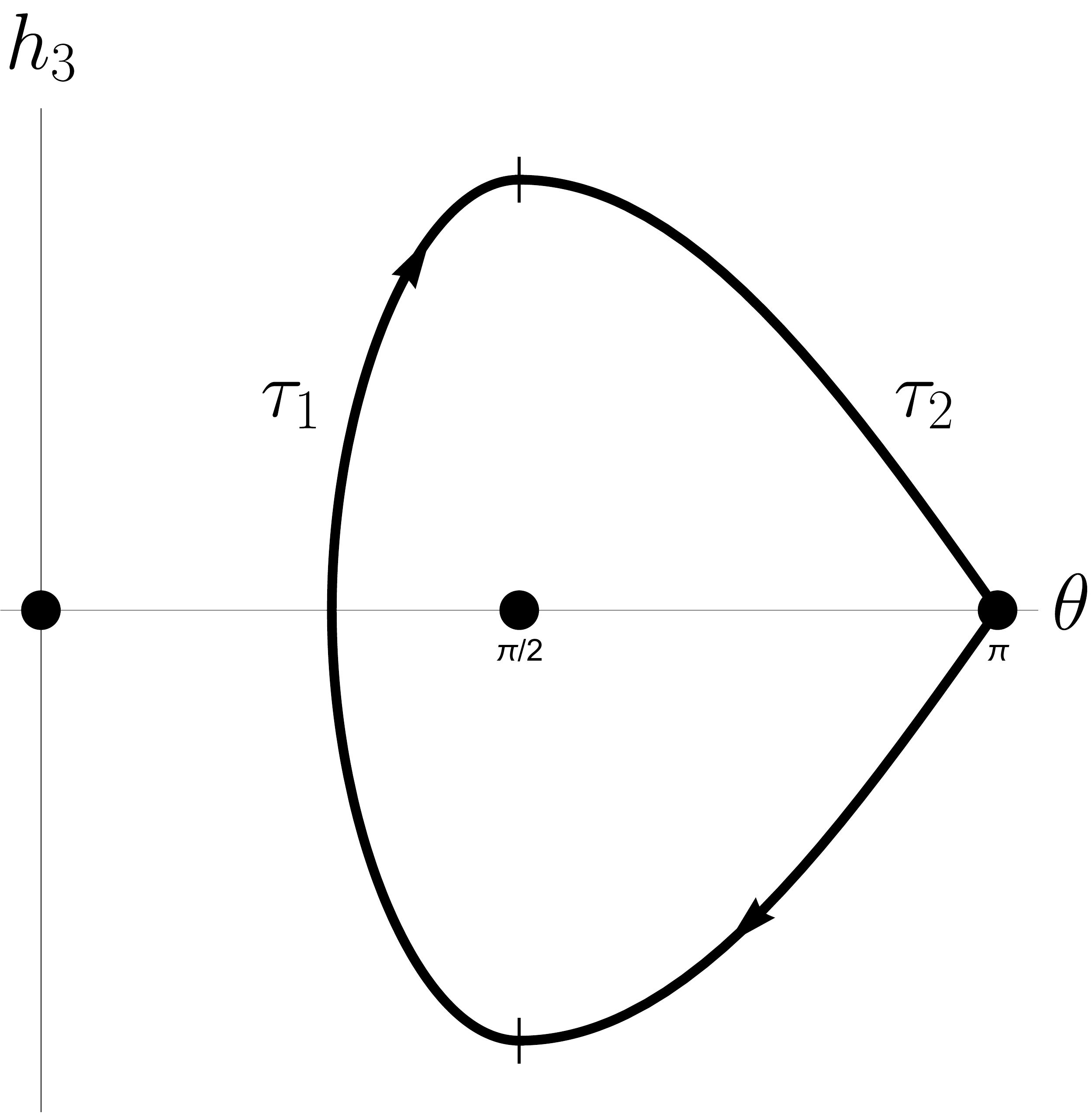}{$(\theta(t), h_3(t))$: Case 1), level line $C_3$}{fig:1)C2}{0.6}
{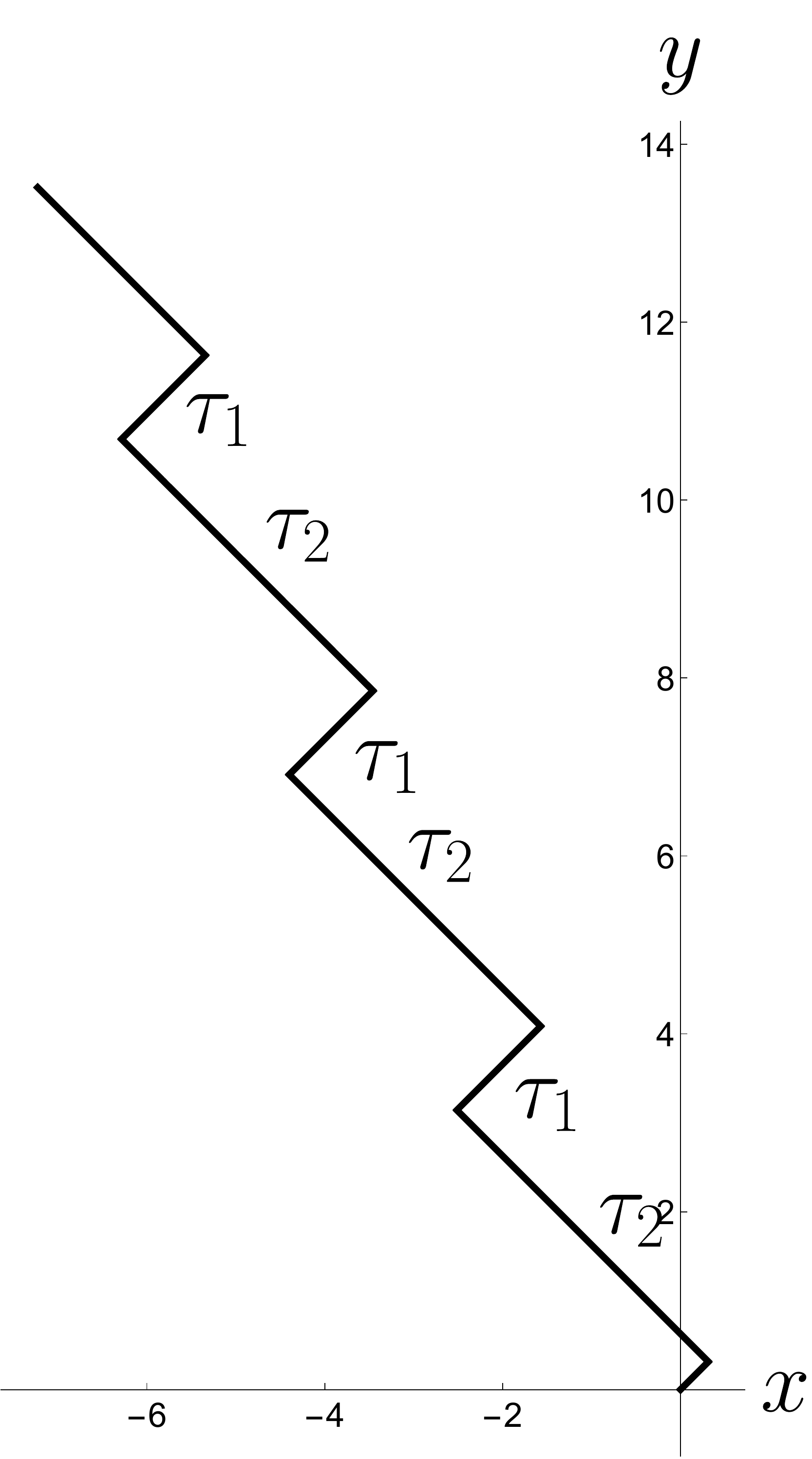}{$(x(t), y(t))$: Case 1), level line $C_3$}{fig:xy1)C3}{0.4}

Then the control has the form 
\begin{table}[H]
\begin{center}
\begin{tabular}{cccccccc}
$(u_1, u_2):$ & $\quad$ & $\dots$ & $(+, +)$ & $(-, +)$ & $(+, +)$ & $(-, +)$ & \dots \\
& & & $\tau_1$ & $\tau_2$ & $\tau_1$ & $\tau_2$ &
\end{tabular}
\end{center}
\end{table}
with
$$ \tau_1 = \frac{2\sqrt{2(h_4 - h_5)}}{h_4 + h_5}, \qquad \tau_2 = \frac{2\sqrt{2(h_4 - h_5)}}{h_4 - h_5} = 2 \sqrt{\frac{2}{h_4 - h_5}} > \tau_1.$$
Notice that despite the fact that $h_2(\lambda_t)$ vanishes when $\theta = \pi, h_3 = 0$, the control $u_2(t)$ does not switch at such points since  $h_2(\lambda_t)$ preserves sign near these points.

\subsubsection{Case $1)$, domain $C_4$}
\label{subsec:1)I2}
Let $h_4 > h_5 > 0, E \in (-h_5, h_5)$, the corresponding trajectory is shown in Figs.~\ref{fig:1)I2}, \ref{fig:xy1)I2}.

\twofiglabelsize
{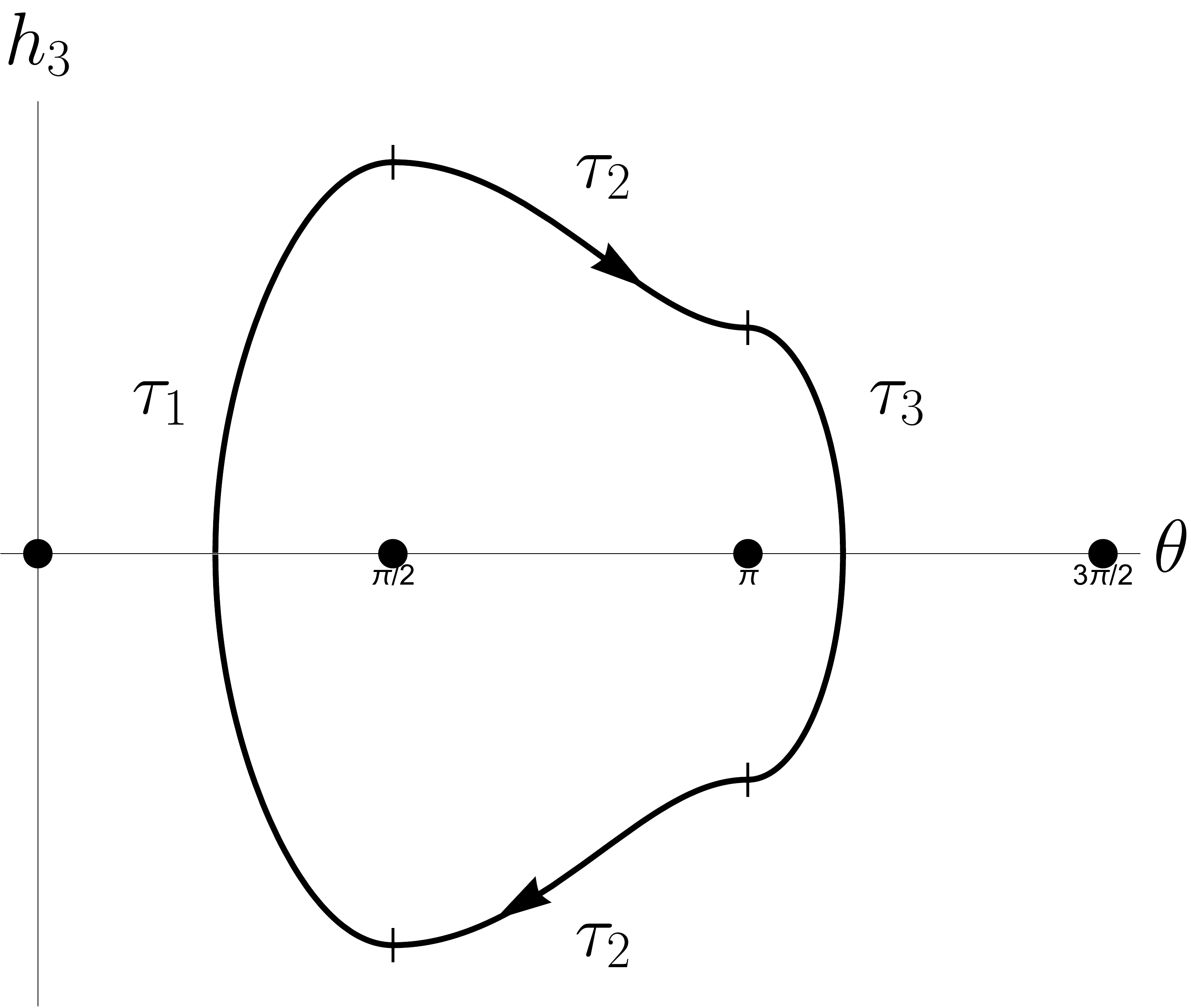}{$(\theta(t), h_3(t))$: Case $1)$, domain $C_4$}{fig:1)I2}{0.7}
{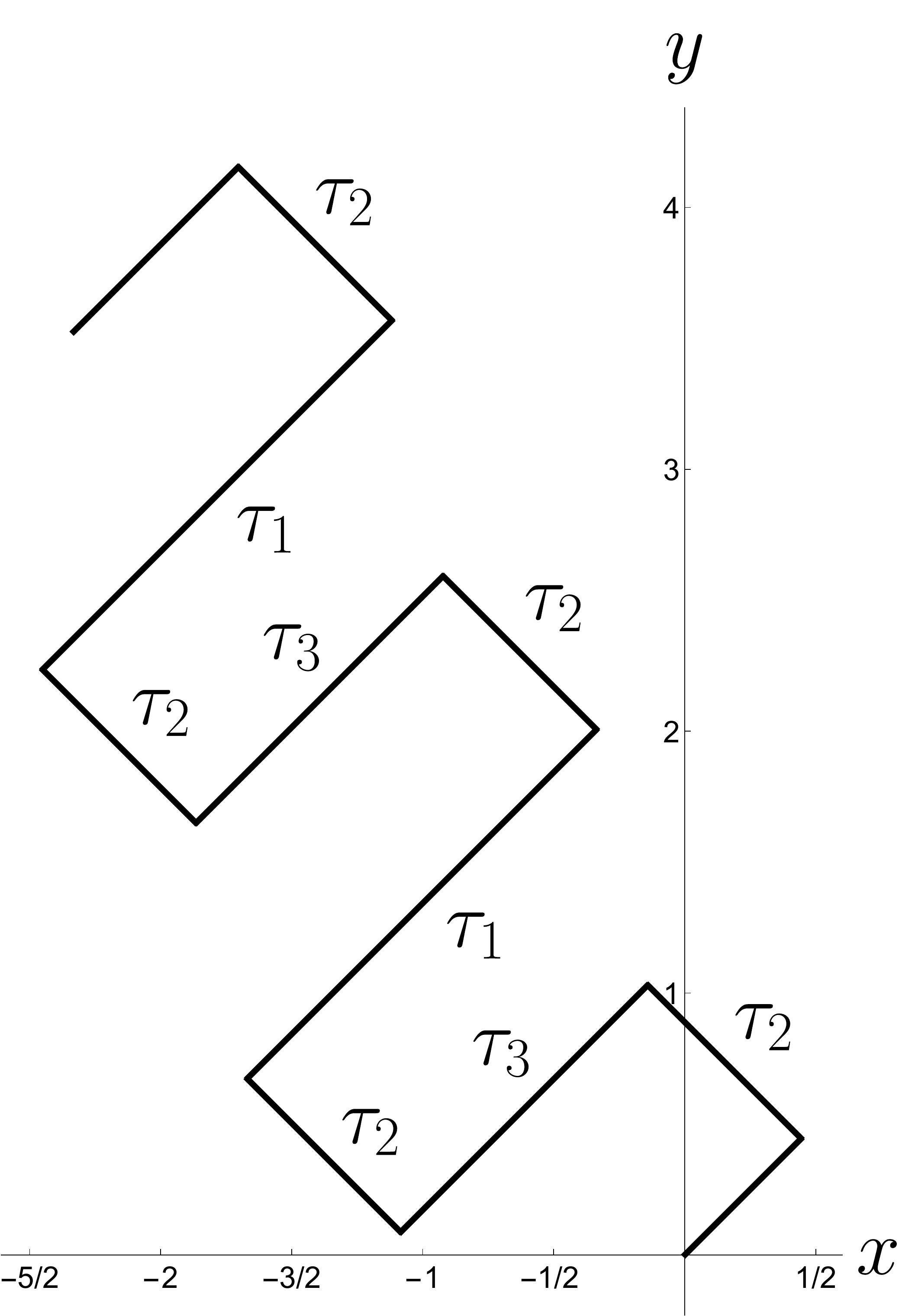}{$(x(t), y(t))$: Case $1)$, domain $C_4$}{fig:xy1)I2}{0.5}

Then the control has the form:
\begin{table}[H]
\begin{center}
\begin{tabular}{ccccccccc}
$(u_1, u_2):$ & $\quad$ & $\dots$ & $(+, +)$ & $(-, +)$ & $(-, -)$ & $(-, +)$ & $(+, +)$ &\dots \\
& & & $\tau_1$ & $\tau_2$ & $\tau_3$ & $\tau_2$ & $\tau_1$ &
\end{tabular}
\end{center}
\end{table}
with
\begin{gather*}
\tau_1 = \frac{2\sqrt{2(E+h_4)}}{h_4 + h_5}, \qquad \tau_2 = \frac{\sqrt{2(E+h_4)} - \sqrt{2(E+h_5)}}{h_4 - h_5} = \frac{2}{\sqrt{2(E+h_4)} + \sqrt{2(E+h_5)}},\\
\tau_3 = \frac{2\sqrt{2(E+h_5)}}{h_4 + h_5} < \tau_1.
\end{gather*}

\subsubsection{Case $1)$, level line $C_5$}
Let $h_4 > h_5 > 0$, $E = h_5$,  the corresponding trajectory is shown in Figs.~\ref{fig:1)C3}, \ref{fig:xy1)C5}.

\twofiglabelsize
{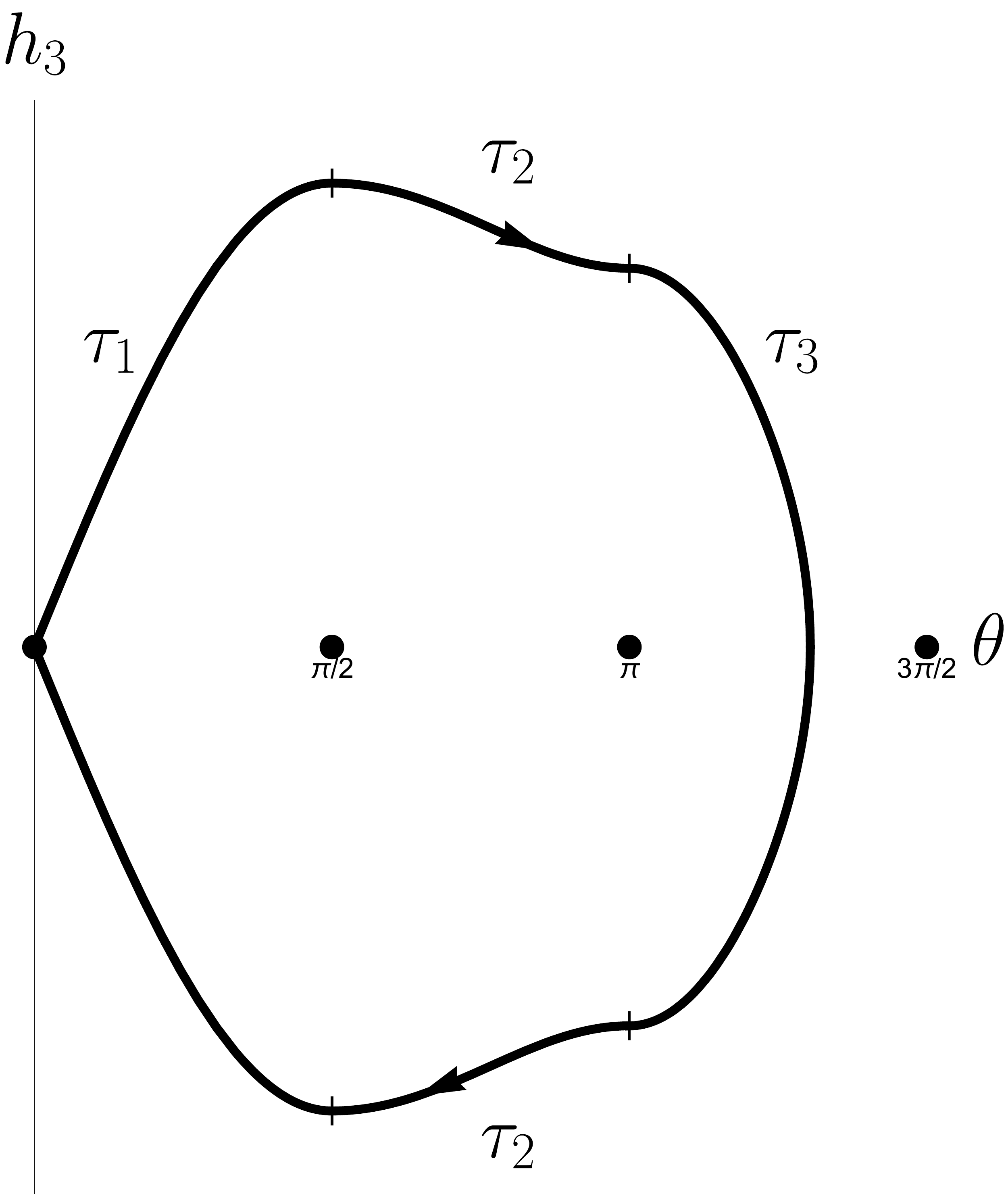}{$(\theta_t, h_3(t))$: Case 1), level line $C_5$}{fig:1)C3}{0.6}
{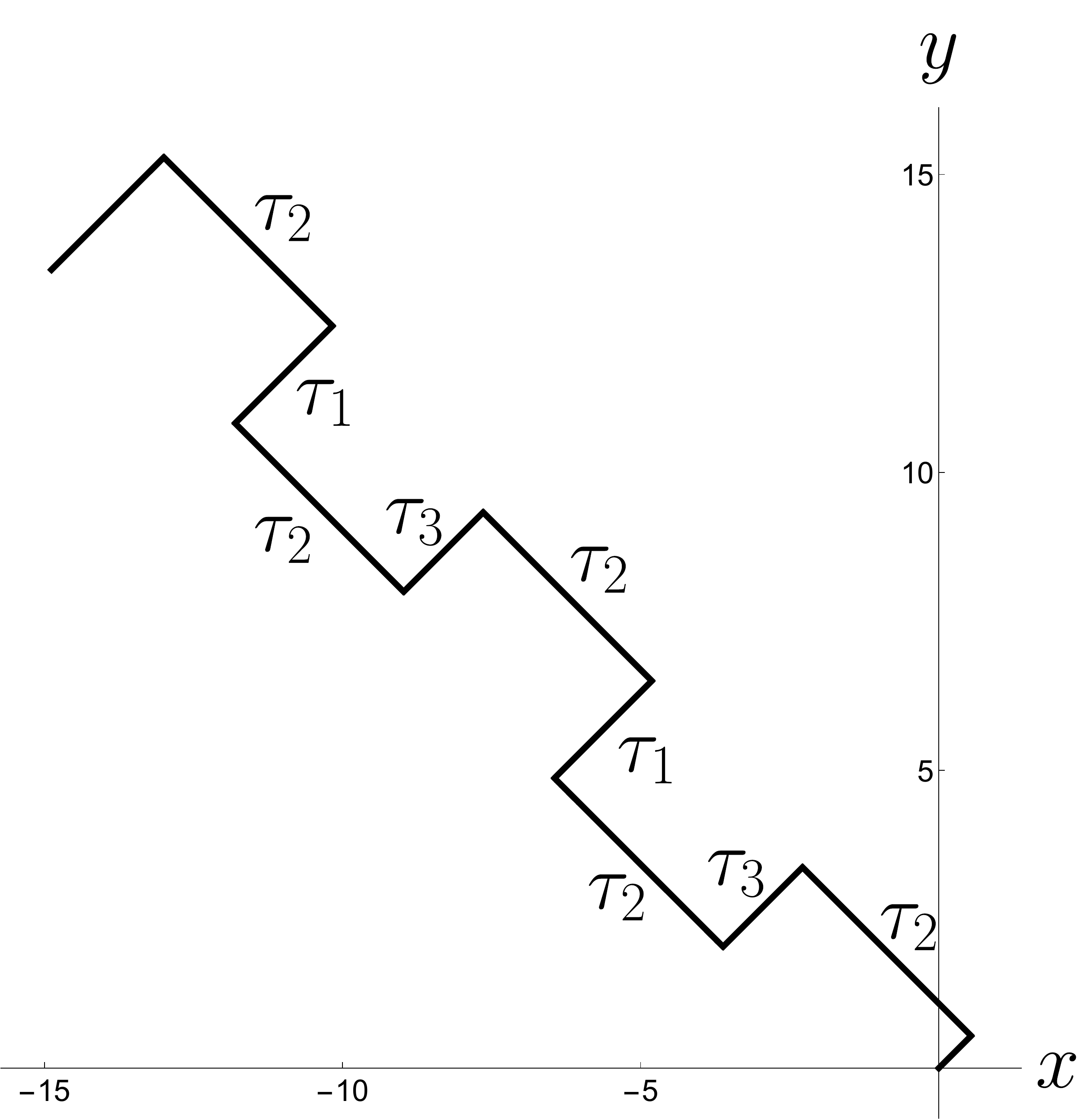}{$(x(t), y(t))$: Case 1), level line $C_5$}{fig:xy1)C5}{0.6}

Then the control has the form:
\begin{table}[H]
\begin{center}
\begin{tabular}{ccccccccc}
$(u_1, u_2):$ & $\quad$ & $\dots$ & $(+, +)$ & $(-, +)$ & $(-, -)$ & $(-, +)$ & $(+, +)$ &\dots \\
& & & $\tau_1$ & $\tau_2$ & $\tau_3$ & $\tau_2$ & $\tau_1$ &
\end{tabular}
\end{center}
\end{table}
with
\begin{gather*}
\tau_1 = 2\sqrt{\frac{2}{h_4 + h_5}}, \qquad  \tau_2 = \frac{\sqrt{2(h_4 + h_5)} - 2\sqrt{h_5}}{h_4 - h_5}, \qquad \tau_3 = \frac{4\sqrt{h_5}}{h_4 + h_5}.
\end{gather*}
The control $u_1 (t)$ does not switch when $\theta = 0$, $h_3 = 0$.

\subsubsection{Case $1)$, domain $C_6$}
Let $h_4 > h_5 > 0$, $E \in (h_5, h_4)$,  the corresponding trajectory is shown in Figs.~\ref{fig:1)I3}, \ref{fig:xy1)C6}.

\twofiglabelsize
{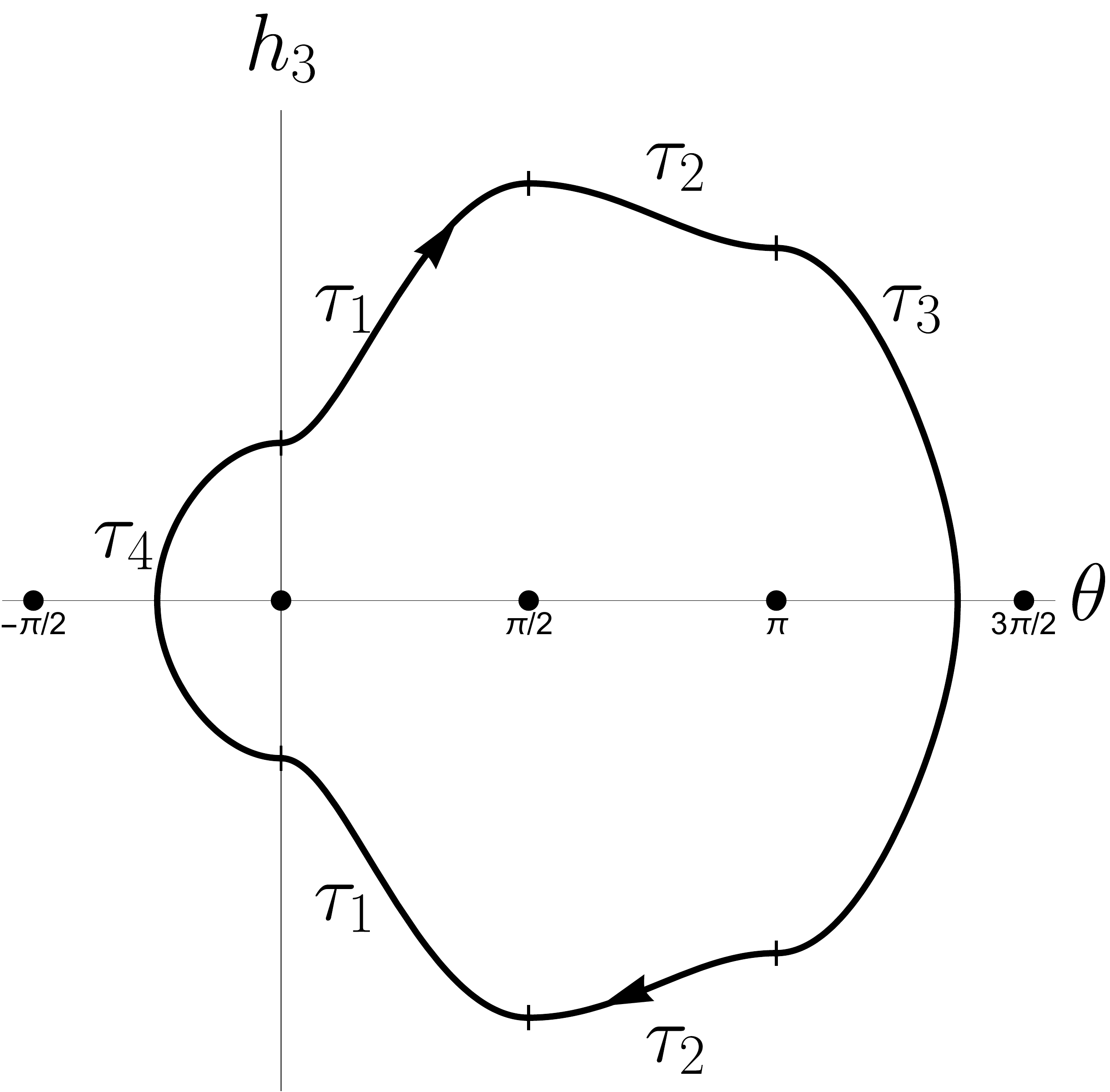}{$(\theta(t), h_3(t))$: Case 1), domain $C_6$}{fig:1)I3}{0.6}
{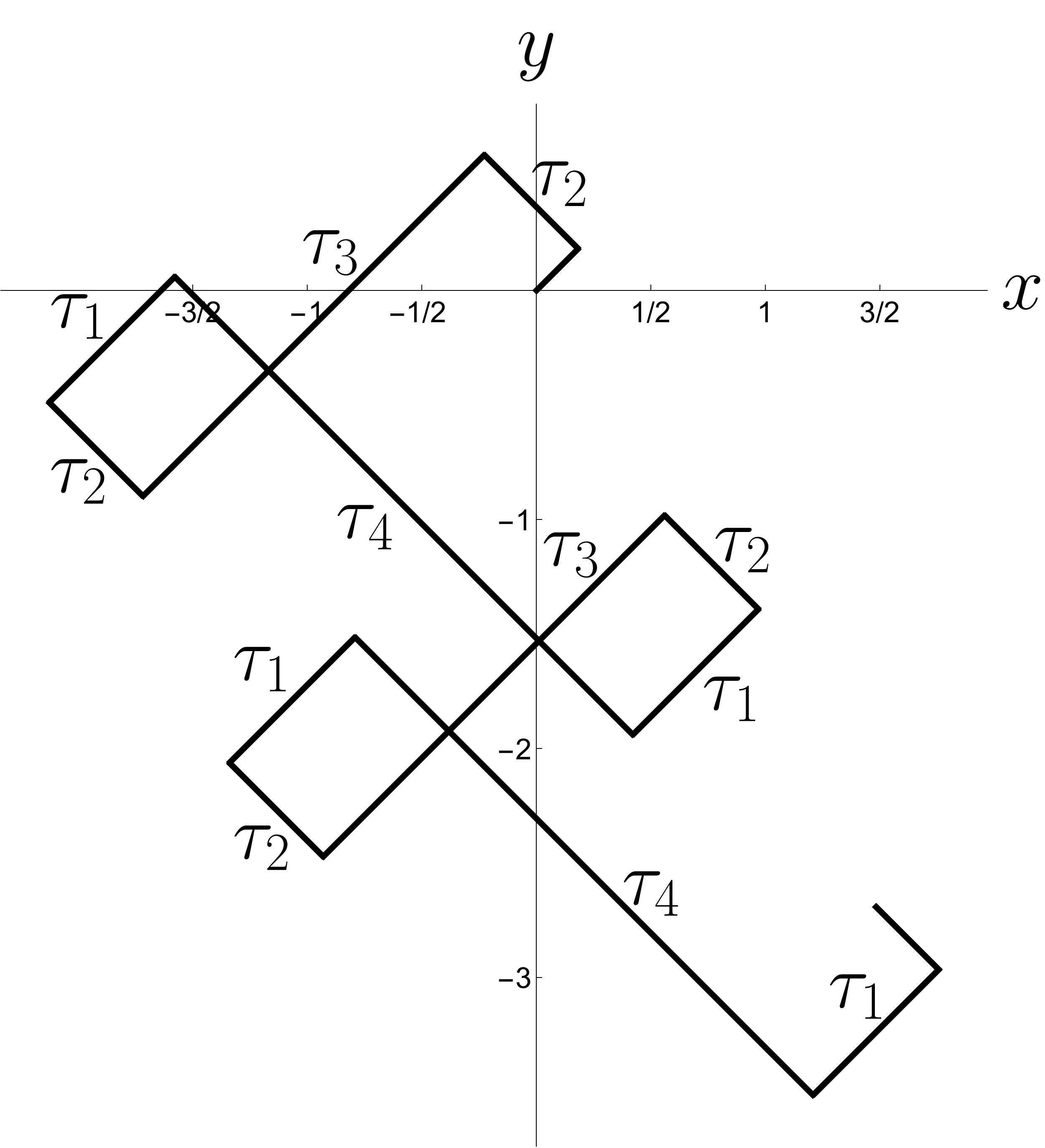}{$(x(t), y(t))$: Case 1), domain $C_6$}{fig:xy1)C6}{0.6}

Then the control has the form:
\begin{table}[H]
\begin{center}
\begin{tabular}{ccccccccccc}
$(u_1, u_2):$ & $\quad$ & $\dots$ & $(+, +)$ & $(-, +)$ & $(-, -)$ & $(-, +)$ & $(+, +)$ & $(+, -)$ & $(+, +)$ &\dots \\
& & & $\tau_1$ & $\tau_2$ & $\tau_3$ & $\tau_2$ & $\tau_1$ & $\tau_4$ & $\tau_1$&
\end{tabular}
\end{center}
\end{table}

with
\begin{gather*}
\tau_1 = \frac{\sqrt{2(E + h_4)} - \sqrt{2(E - h_5)}}{h_4 + h_5} = \frac{2}{\sqrt{2(E+h_4)} + \sqrt{2(E-h_5)}}, \\
\tau_2 = \frac{\sqrt{2(E + h_4)} - \sqrt{2(E + h_5)}}{h_4 - h_5} = \frac{2}{\sqrt{2(E+h_4)} + \sqrt{2(E+h_5)}}, \\
\tau_3 = \frac{2\sqrt{2(E + h_5)}}{h_4 + h_5}, \quad \tau_4 = \frac{2\sqrt{2(E - h_5)}}{h_4 - h_5}.
\end{gather*}

\subsubsection{Case $1)$, level line $C_7$}
Let $h_4 > h_5 > 0$, $E = h_4$,  the corresponding phase portrait is shown in Fig.~\ref{fig:1)C4}.

\twofiglabelsize
{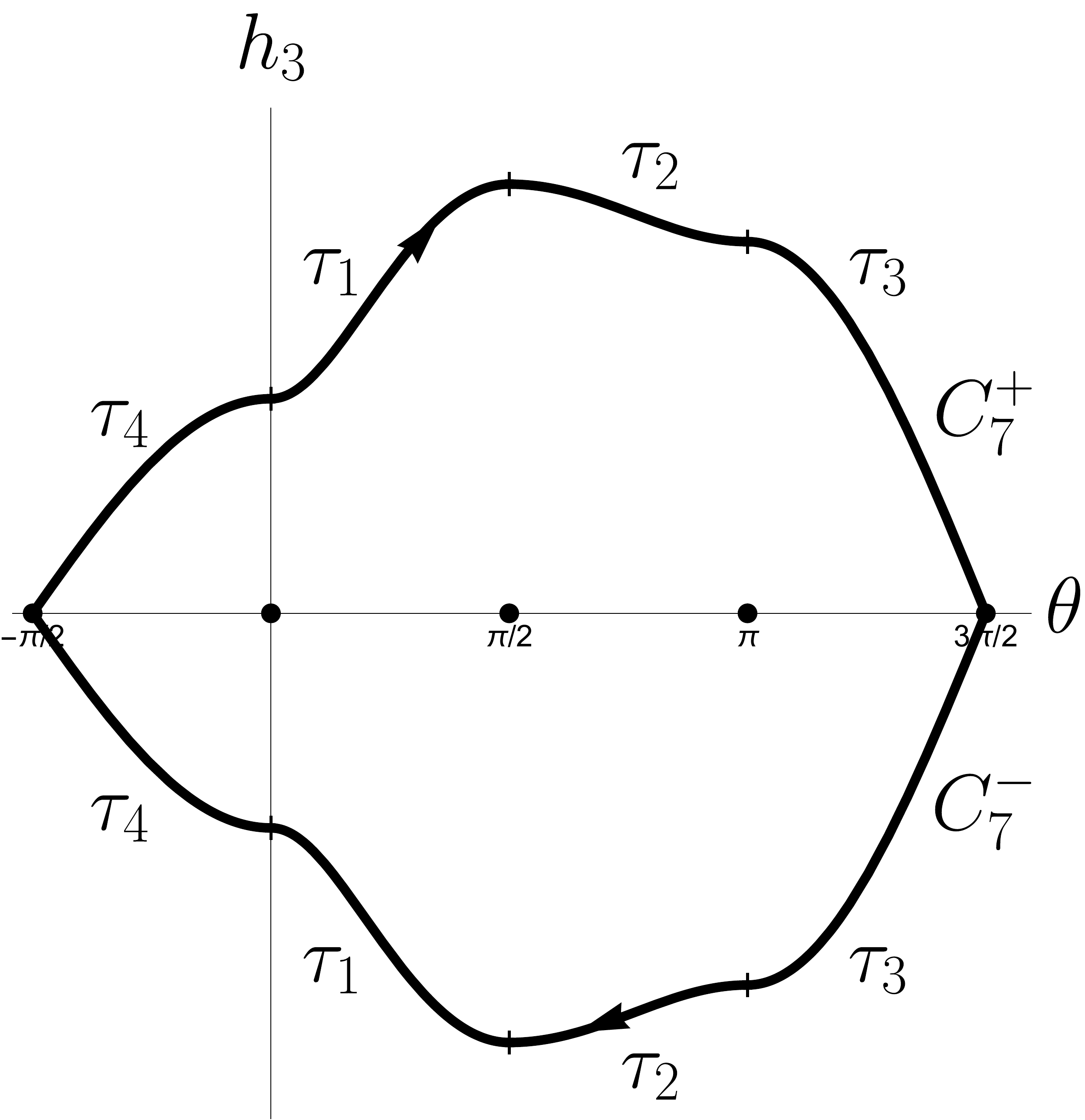}{$(\theta(t), h_3(t))$: Case 1), level line $C_7$}{fig:1)C4}{0.6}
{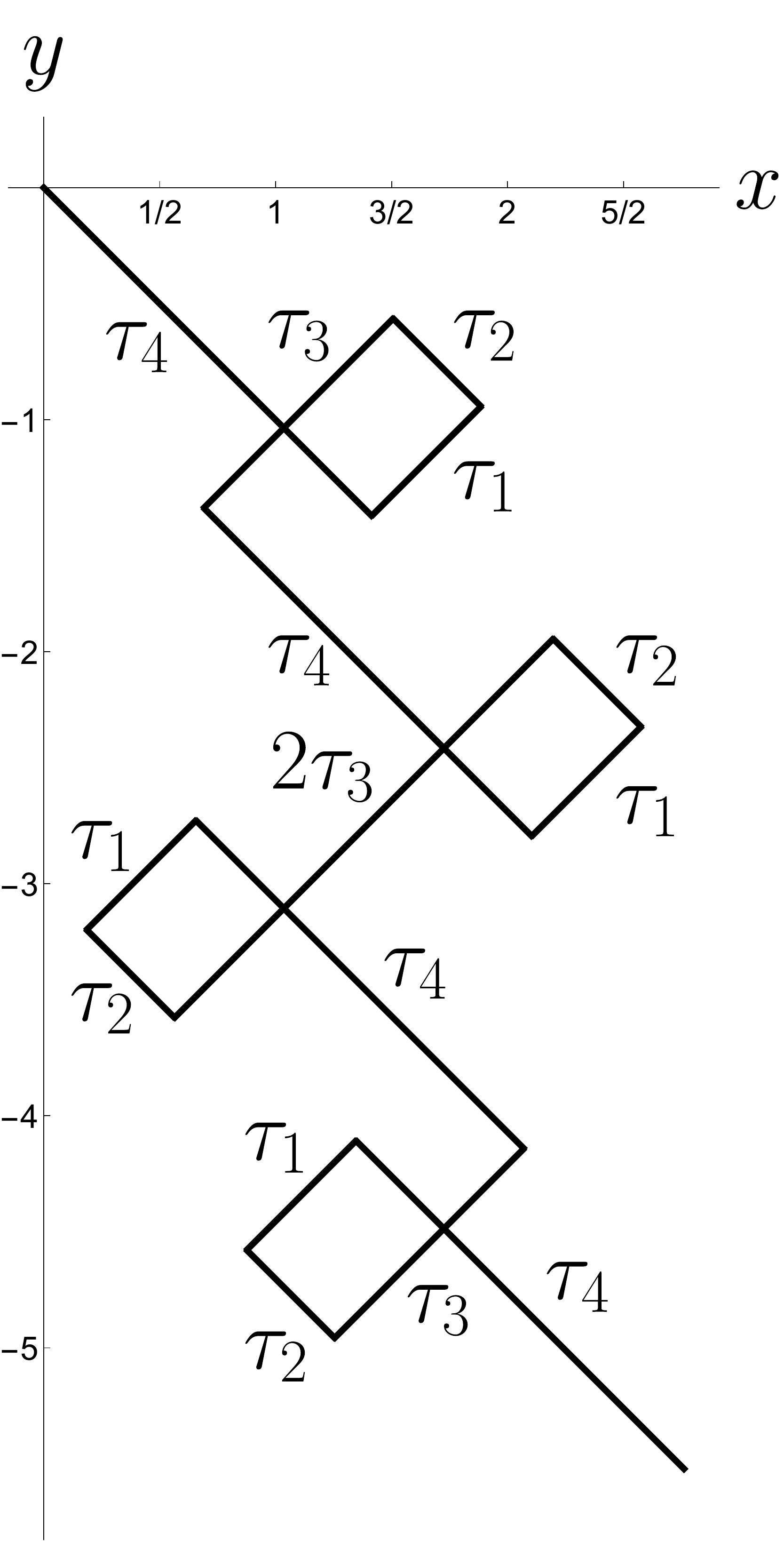}{$(x(t), y(t))$:  Case 1), level line $C_7$}{fig:xy1)C7}{0.4}

The  level line $C_7$ is defined in the domains $\{ \theta \in (-\frac{\pi}{2}, 0) \}$ and $\{ \theta \in (\pi, \frac{3\pi}{2}) \}$ by the equations 
$$h_3 = \pm \sqrt{2(h_4 - h_5)} \cos \theta \text{ and } h_3 = \pm \sqrt{2(h_4 + h_5)} \cos 2\theta.
$$ 
Thus the level line $C_7$ is homeomorphic to figure 8, with self-intersection at the point 
$(\theta, h_3) = (\frac{3\pi}{2}, 0)$. The intersections $C_7^+ = C_7 \cap \{ h_3 \ge 0 \}, C_7^- = C_7 \cap \{ h_3 \le 0 \}$ are continuous curves homeomorphic to $S^1$, with the only singularity --- the corner point $(\theta, h_3) = (\frac{3\pi}{2}, 0)$.

Each bang-bang control is obtained by choosing a finite segment from the following infinite periodic graph:
\medskip
$$
\xymatrix{
&&\tau_1 & \tau_2 & \tau_3 & \tau_4 & \tau_1  & \\
C_7^+ & \cdots \ar@{>}[r] & (+,+) \ar@{>}[r]& (-,+) \ar@{>}[r]& (-,-)\ar@{>}[r] \ar@{>}[dr]& (+,-)\ar@{>}[r]& (+,+)\ar@{>}[r]& \cdots  \\
C_7^- & \cdots \ar@{>}[r] & (-,+) \ar@{>}[r]& (+,+) \ar@{>}[r]& (+,-) \ar@{>}[r] \ar@{>}[ur] & (-,-) \ar@{>}[r]& (-,+)\ar@{>}[r]&\cdots  \\
&&\tau_2 & \tau_1 & \tau_4 & \tau_3 & \tau_2  & 
}
$$
\medskip

We have
\begin{gather*}
\tau_1 = \frac{2\sqrt{h_4} - \sqrt{2(h_4 - h_5)}}{h_4 + h_5}, \qquad \tau_2 = \frac{2\sqrt{h_4} - \sqrt{2(h_4 + h_5)}}{h_4 - h_5}, \\
\tau_3 =  \sqrt{\frac{2}{h_4 + h_5}}, \qquad  \tau_4 =  \sqrt{\frac{2}{h_4 - h_5}}.
\end{gather*}

The  curves $(x(t), y(t))$ corresponding to the curves $C_7^+$ and $C_7^-$ are shown in Figs.~\ref{fig:xy1)C7+} and  \ref{fig:xy1)C7-} respectively. An example of curve $(x(t), y(t))$ corresponding to two curves $C_7^+$ and two curves  $C_7^-$ is given in Fig.~\ref{fig:xy1)C7}.

\twofiglabelsize
{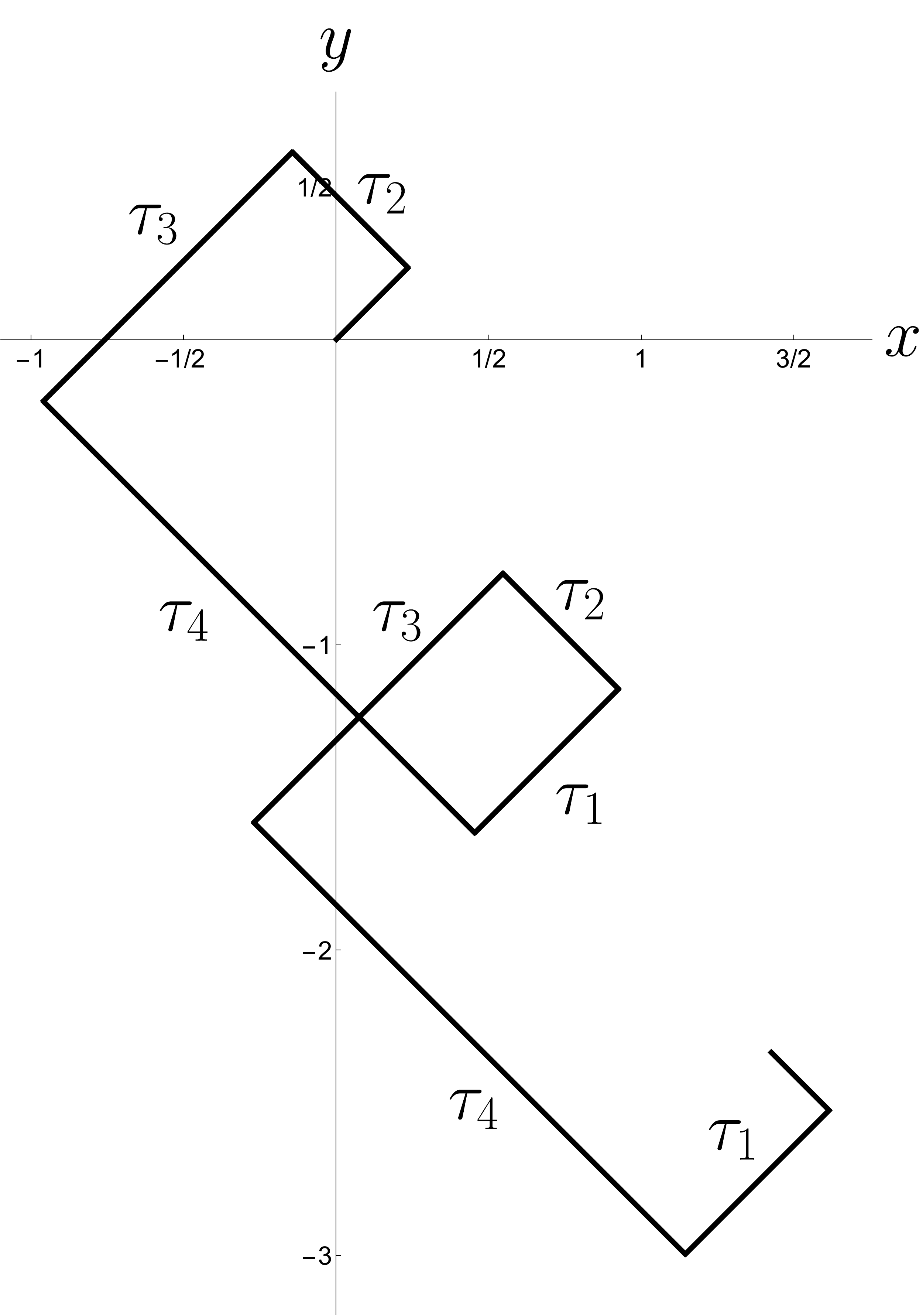}{$(x(t), y(t))$: Case 1), curve $C_7^+$}{fig:xy1)C7+}{0.6}
{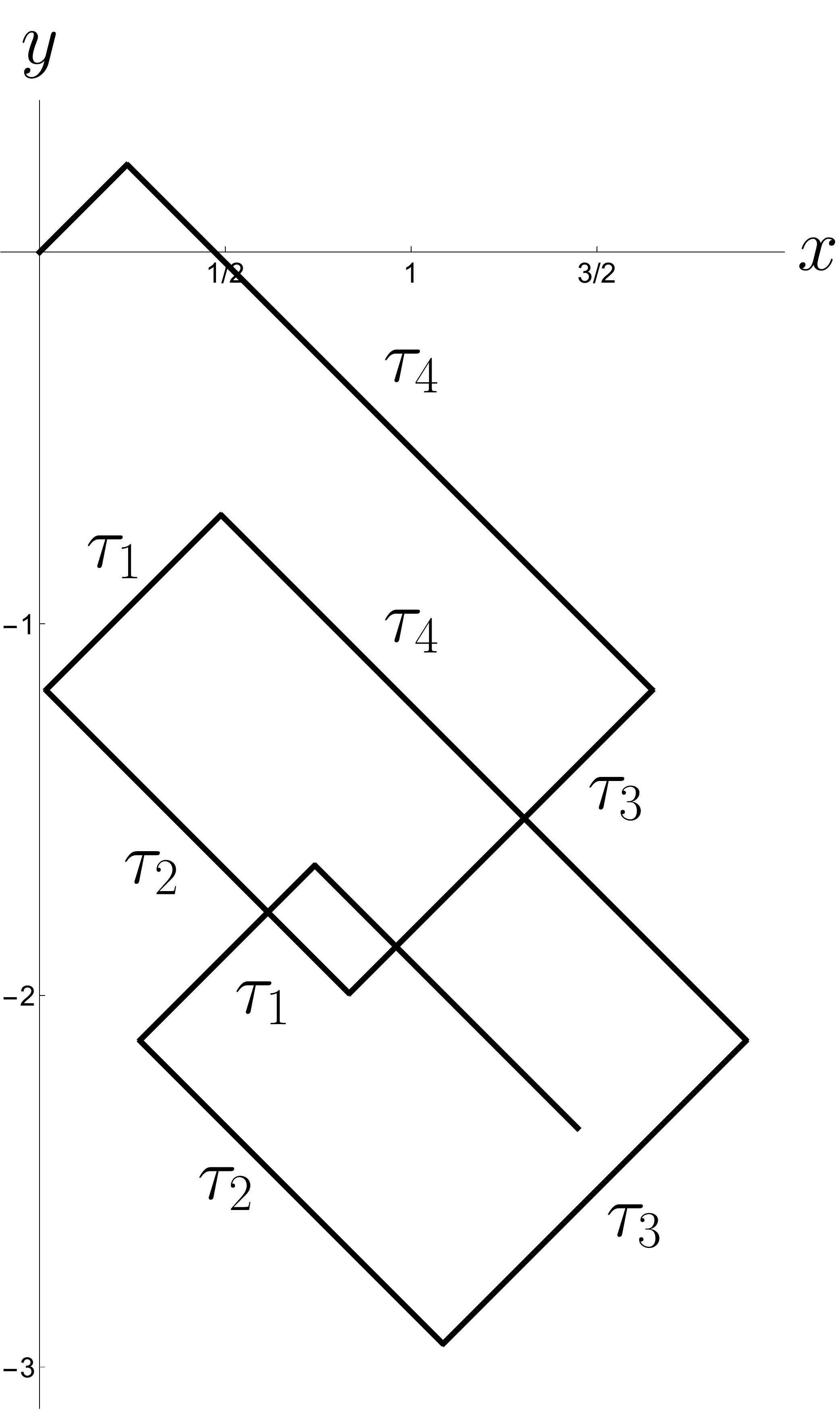}{$(x(t), y(t))$:  Case 1), curve $C_7^-$}{fig:xy1)C7-}{0.5}

\subsubsection{Case $1)$, domain $C_8$}
Let $h_4 > h_5 > 0$, $E > h_4$,  the corresponding trajectory is shown in Figs.~\ref{fig:1)C8}, \ref{fig:xy1)C8}.

\twofiglabelsize
{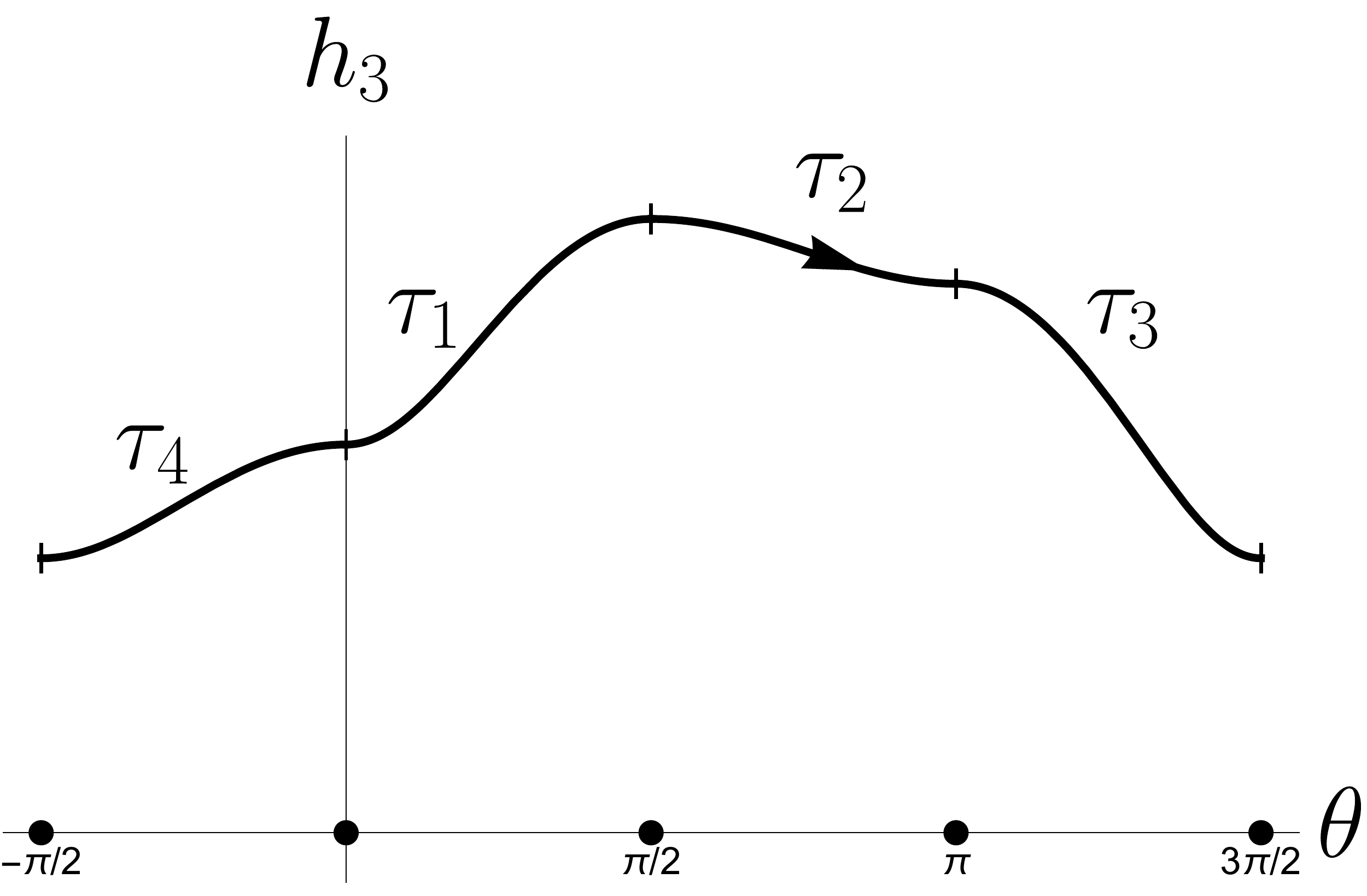}{$(\theta(t), h_3(t))$: Case 1), domain $C_8$}{fig:1)C8}{0.6}
{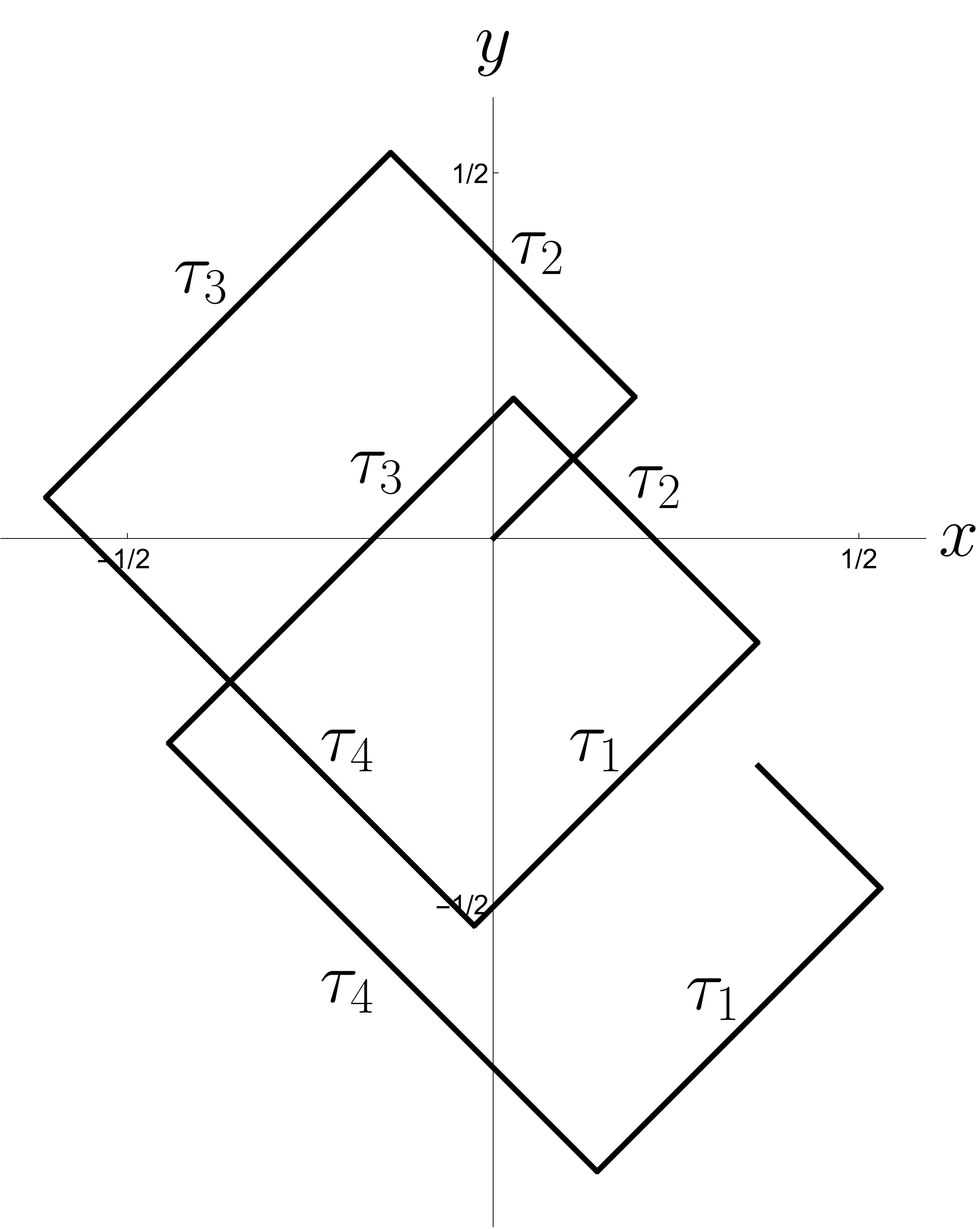}{$(x(t), y(t))$: Case 1), domain $C_8$}{fig:xy1)C8}{0.6}

In the case $h_3 > 0$ the bang-bang control has the form 
\begin{table}[H]
\begin{center}
\begin{tabular}{ccccccccc}
$(u_1, u_2):$ & $\quad$ & $\dots$ & $(+, +)$ & $(-, +)$ & $(-, -)$ & $(+, -)$ & $(+, +)$ &\dots \\
& & & $\tau_1$ & $\tau_2$ & $\tau_3$ & $\tau_4$ & $\tau_1$ &
\end{tabular}
\end{center}
\end{table}
In the case $h_3 < 0$ the order of switchings is opposite. We have
\begin{gather*}
\tau_1 = \frac{\sqrt{2(E + h_4)} - \sqrt{2(E - h_5)}}{h_4 + h_5} = \frac{2}{\sqrt{2(E+h_4)} + \sqrt{2(E-h_5)}}, \\
\tau_2 = \frac{\sqrt{2(E + h_4)} - \sqrt{2(E + h_5)}}{h_4 - h_5} = \frac{2}{\sqrt{2(E+h_4)} + \sqrt{2(E+h_5)}}, \\
\tau_3 = \frac{\sqrt{2(E + h_5)} - \sqrt{2(E - h_4)}}{h_4 + h_5} =  \frac{2}{\sqrt{2(E+h_5)} + \sqrt{2(E-h_4)}}, \\
\tau_4 = \frac{\sqrt{2(E - h_5)} - \sqrt{2(E - h_4)}}{h_4 - h_5} = \frac{2}{\sqrt{2(E-h_5)} + \sqrt{2(E-h_4)}}.
\end{gather*}

\subsection{Case $2)$}
Let $h_4 > h_5 = 0$. Then system~\eq{Hamtheta} has the phase portrait given in Fig.~\ref{fig:2)h3th}, see Subsubsec.~7.2.2~\cite{SFCartan1}.

\onefiglabelsize
{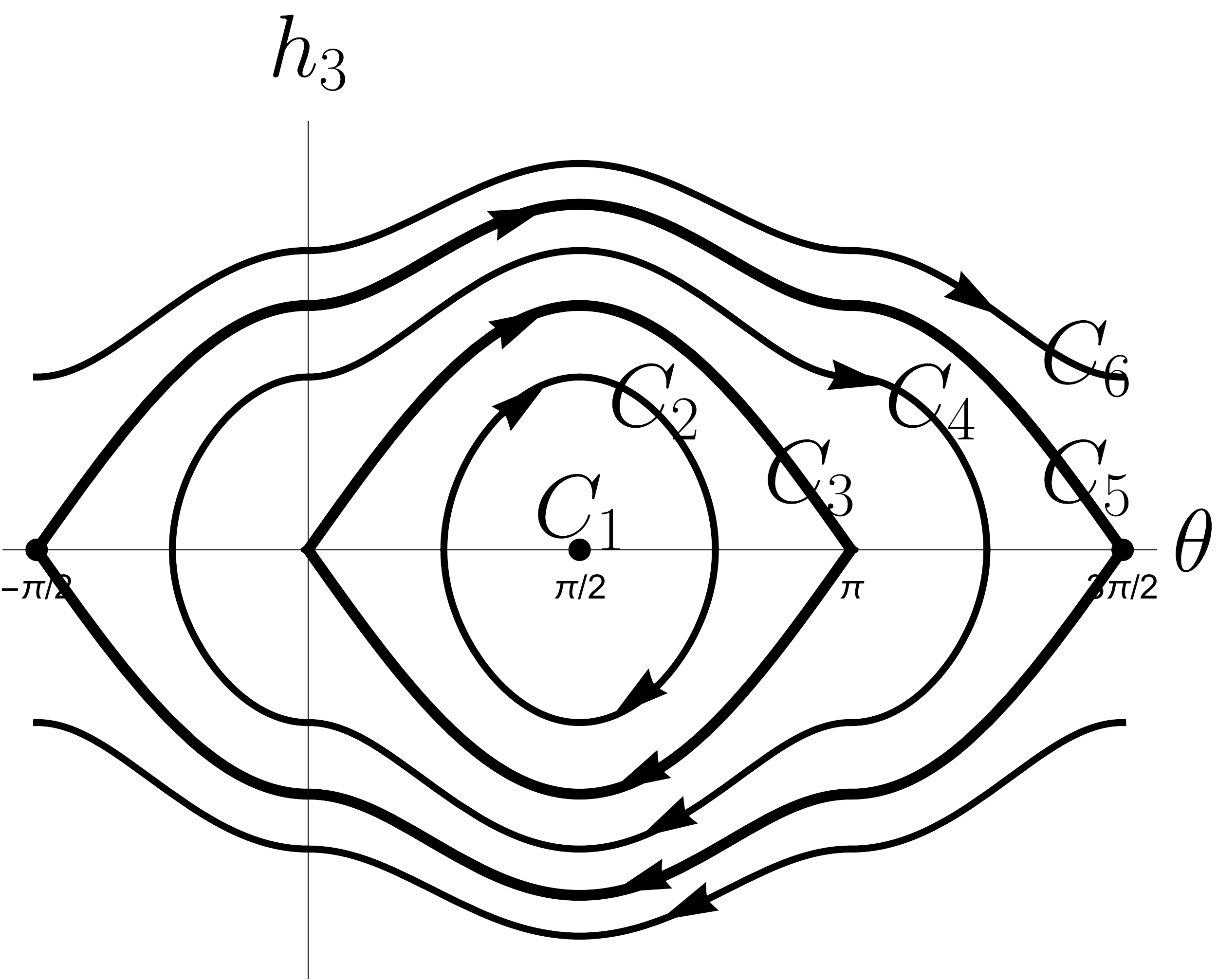}{Phase portrait of system \eq{Hamtheta} in case 2)}{fig:2)h3th}{0.3}

The domain $\{ \lambda \in C \mid h_4 > h_5 = 0 \}$ of the cylinder $C = L^* \cap \{H = 1\}$ admits a decomposition defined by the energy integral $E$:
\begin{align*}
&\{ \lambda \in C \mid h_4 > h_5 = 0 \} = \cup_{i=1}^6 C_i,\\
& C_1 = E^{-1} (-h_4), \qquad C_2 = E^{-1} (-h_4, 0), \qquad C_3 = E^{-1} (0), \\
&C_4 = E^{-1} (0, h_4), \qquad C_5 = E^{-1} (h_4),  \qquad C_6 = E^{-1} (h_4, +\infty). 
\end{align*}

\subsubsection{Case $2)$, level set $C_1$}
Let $h_4 > h_5 = 0$, $E = -h_4$. Then $\theta \equiv \frac{\pi}{2}$, $h_3 \equiv 0$, and the corresponding trajectory is $h_1$-singular.

\subsubsection{Case $2)$, domain $C_2$}
\label{subsec:2)I1}
We have $h_4 > h_5 = 0$, $E \in (-h_4, 0)$,  the corresponding trajectory is shown in Figs.~\ref{fig:2)I1}, \ref{fig:xy2)C2}.

\twofiglabelsize
{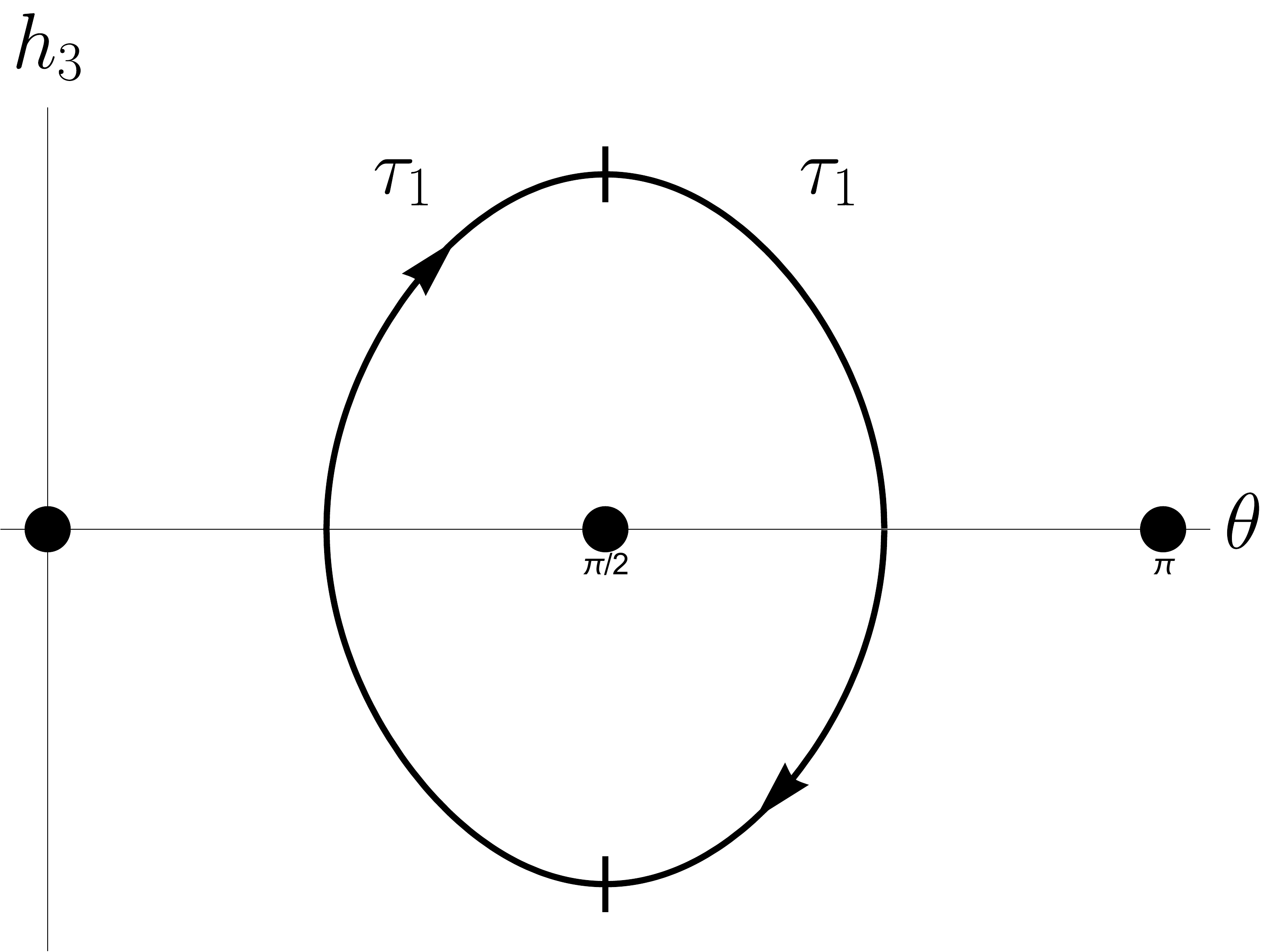}{$(\theta(t), h_3(t))$: Case 2), domain $C_2$}{fig:2)I1}{0.7}
{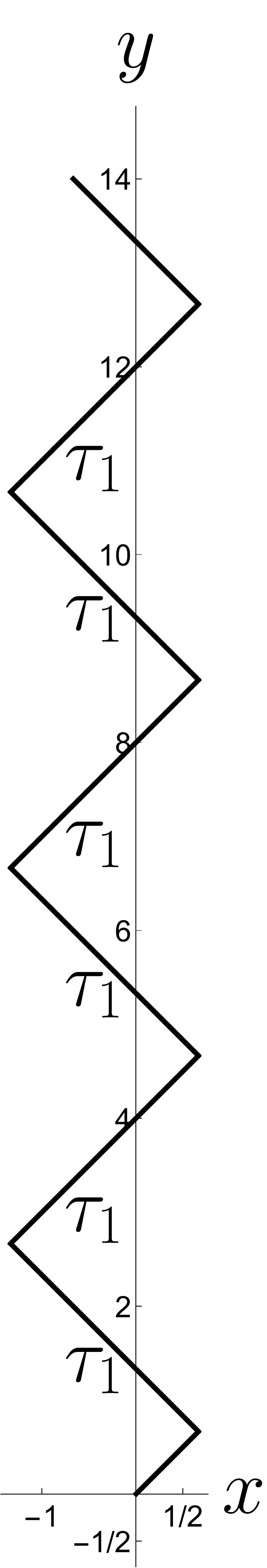}{$(x(t), y(t))$: Case 2), domain $C_2$}{fig:xy2)C2}{0.15}

The control has the form
\begin{table}[H]
\begin{center}
\begin{tabular}{ccccccc}
$(u_1, u_2):$ & $\quad$ & $\dots$ & $(+, +)$ & $(-, +)$ & $(+, +)$ & \dots \\
& & & $\tau_1$ & $\tau_1$ & $\tau_1$ & 
\end{tabular}
\end{center}
\end{table}
with
\begin{gather*}
\tau_1 = \frac{2\sqrt{2(E + h_4)}}{h_4}. 
\end{gather*}

\subsubsection{Case $2)$, level line $C_3$}
\label{subsec:2)C2}
We have $h_4 > h_5 = 0$, $E = 0$,  the corresponding trajectory is shown in Figs.~\ref{fig:2)C2}, \ref{fig:xy2)C3}.

\twofiglabelsize
{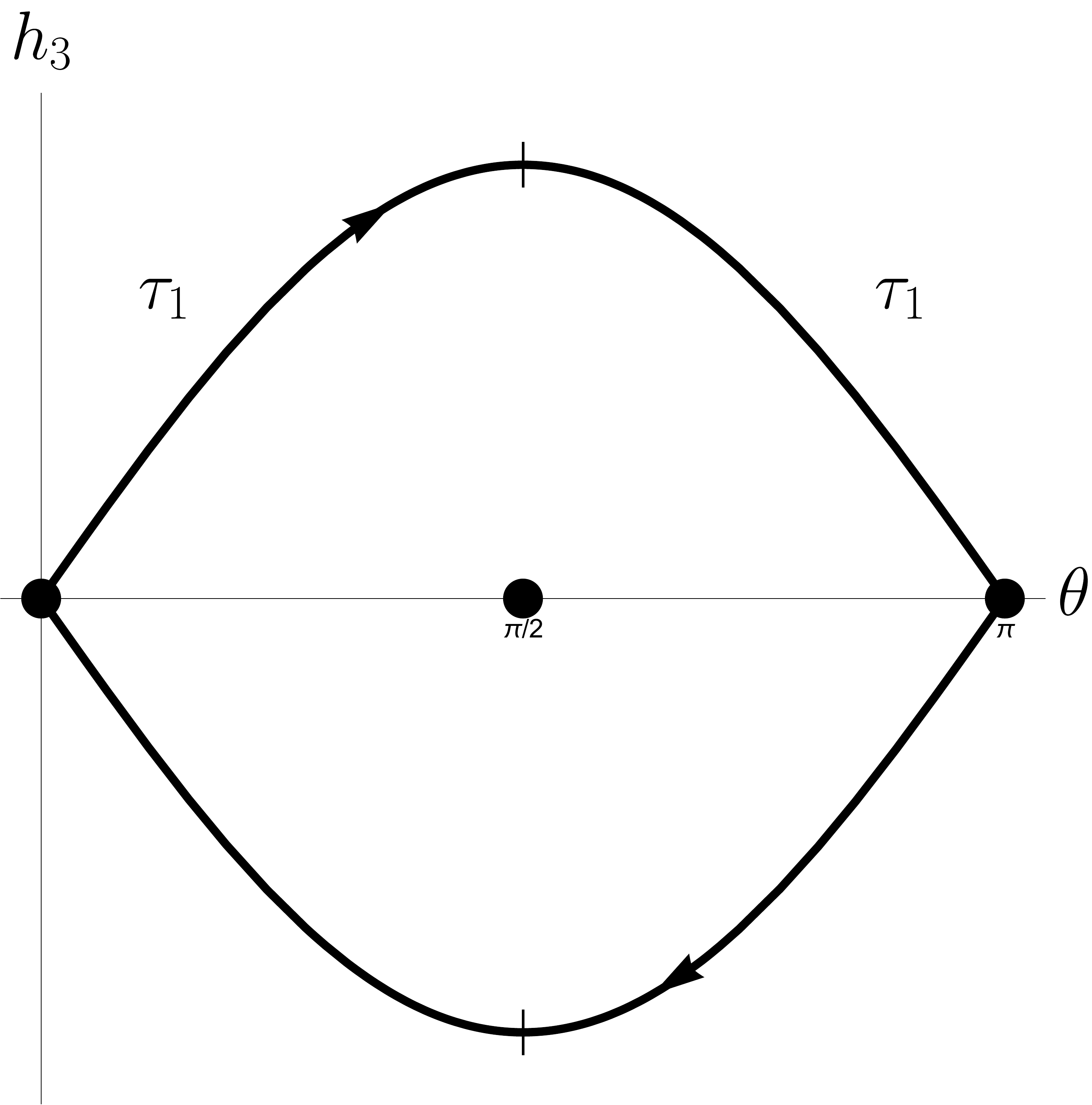}{$(\theta(t), h_3(t))$: Case 2), level line $C_3$}{fig:2)C2}{0.7}
{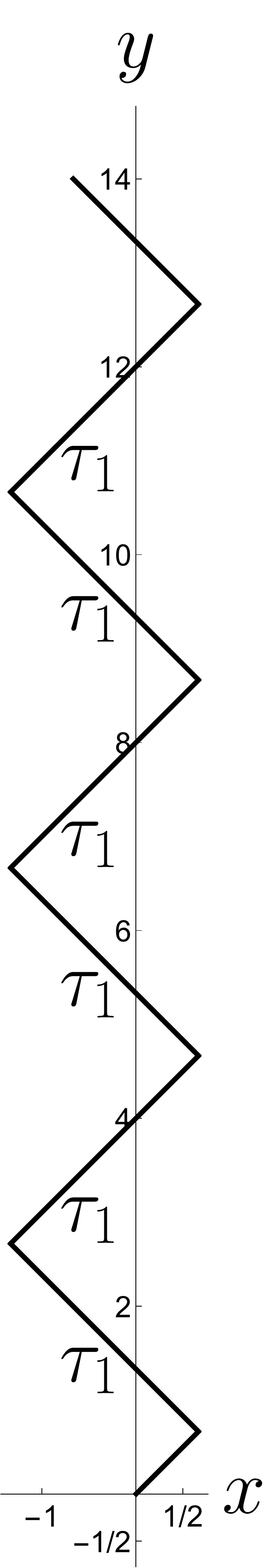}{$(x(t), y(t))$: Case 2), level line $C_3$}{fig:xy2)C3}{0.15}

The control has the form
\begin{table}[H]
\begin{center}
\begin{tabular}{ccccccc}
$(u_1, u_2):$ & $\quad$ & $\dots$ & $(+, +)$ & $(-, +)$ & $(+, +)$ & \dots \\
& & & $\tau_1$ & $\tau_1$ & $\tau_1$ & 
\end{tabular}
\end{center}
\end{table}
with
\begin{gather*}
\tau_1 = 2 \sqrt{\frac{2}{h_4}}.
\end{gather*}

\subsubsection{Case $2)$, domain $C_4$}
We have $h_4 > h_5 = 0$, $E \in (0, h_4)$,  the corresponding trajectory is shown in Figs.~\ref{fig:2)I2}, \ref{fig:xy2)C4}.

\twofiglabelsize
{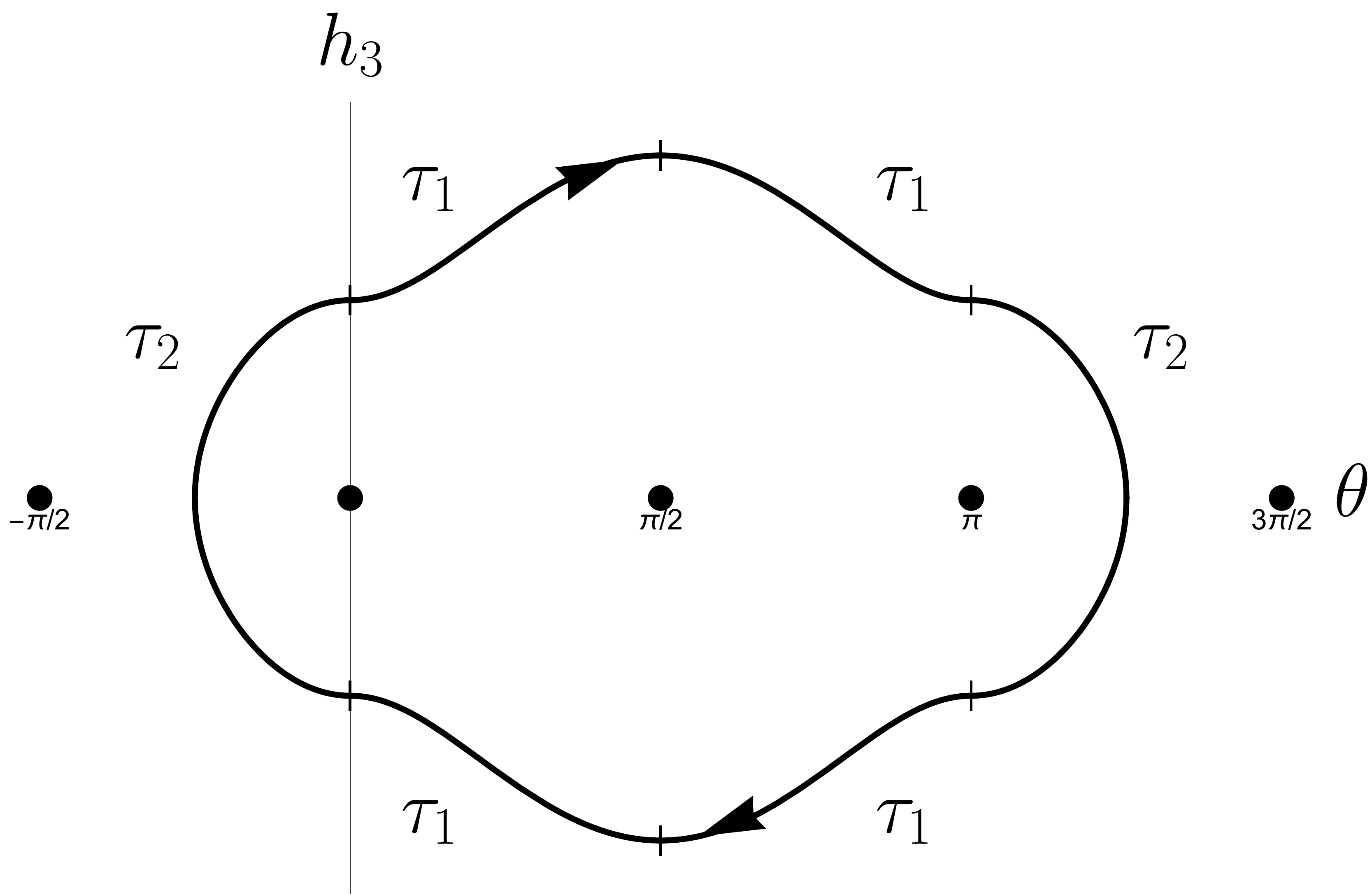}{$(\theta(t), h_3(t))$: Case 2), domain $C_4$}{fig:2)I2}{0.8}
{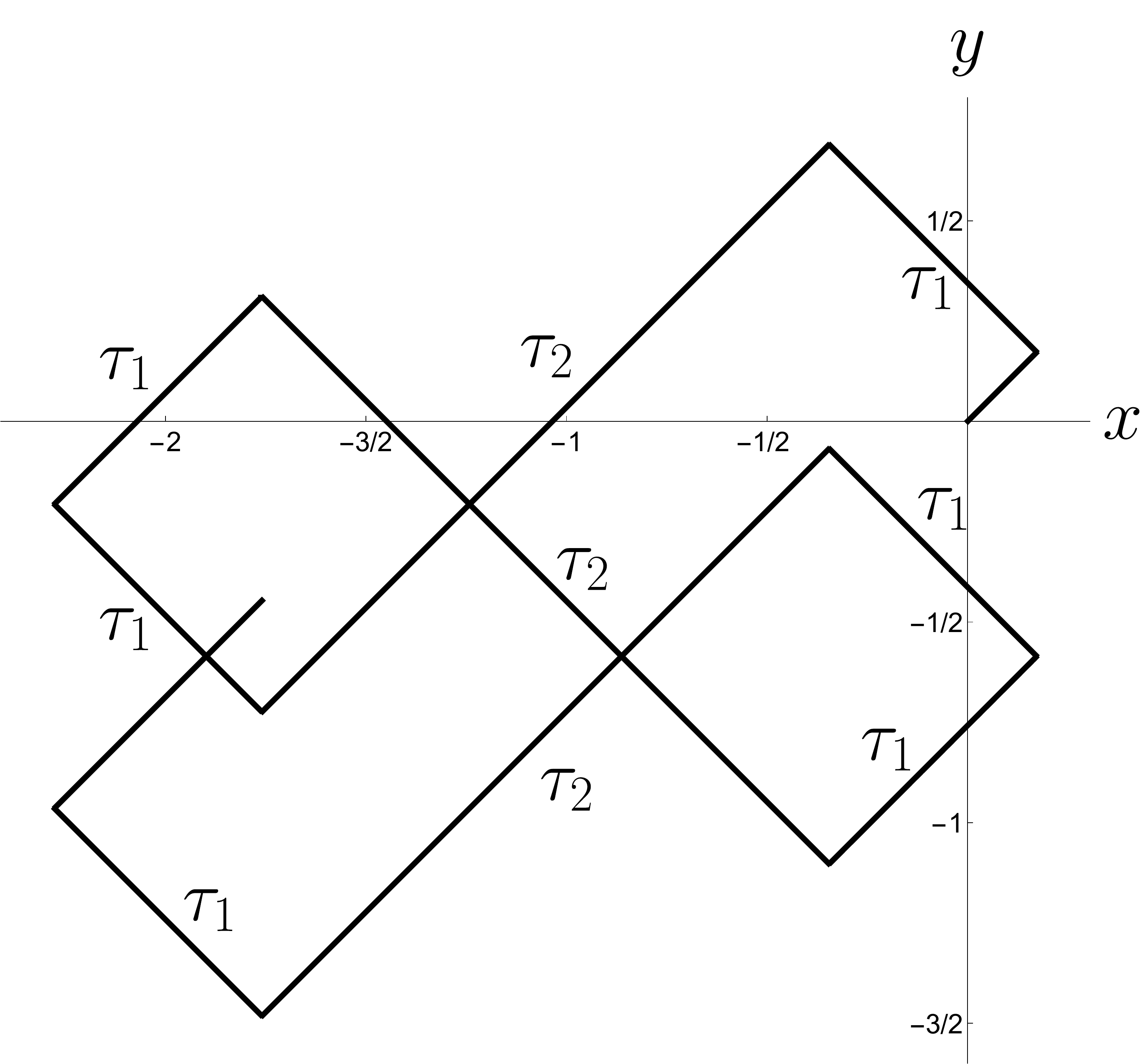}{$(x(t), y(t))$: Case 2), domain $C_4$}{fig:xy2)C4}{0.7}

The control has the form
\begin{table}[H]
\begin{center}
\begin{tabular}{ccccccccccc}
$(u_1, u_2):$ & $\quad$ & $\dots$ & $(+, +)$ & $(-, +)$ & $(-, -)$ & $(-, +)$ & $(+, +)$ & $(+, -)$ & $(+, +)$ &\dots \\
& & & $\tau_1$ & $\tau_1$ & $\tau_2$ & $\tau_1$ & $\tau_1$ & $\tau_2$ & $\tau_1$&
\end{tabular}
\end{center}
\end{table}
with
\begin{gather*}
\tau_1 = \frac{\sqrt{2(E + h_4)} - \sqrt{2E}}{h_4} =\frac{2}{\sqrt{2(E + h_4)} + \sqrt{2E}}, \quad \tau_2 = \frac{2\sqrt{2E}}{h_4}.
\end{gather*}

\subsubsection{Case $2)$, level line $C_5$}
We have $h_4 > h_5 = 0$, $E = h_4$, the corresponding level line is shown in Fig.~\ref{fig:2)C3}.

\twofiglabelsize
{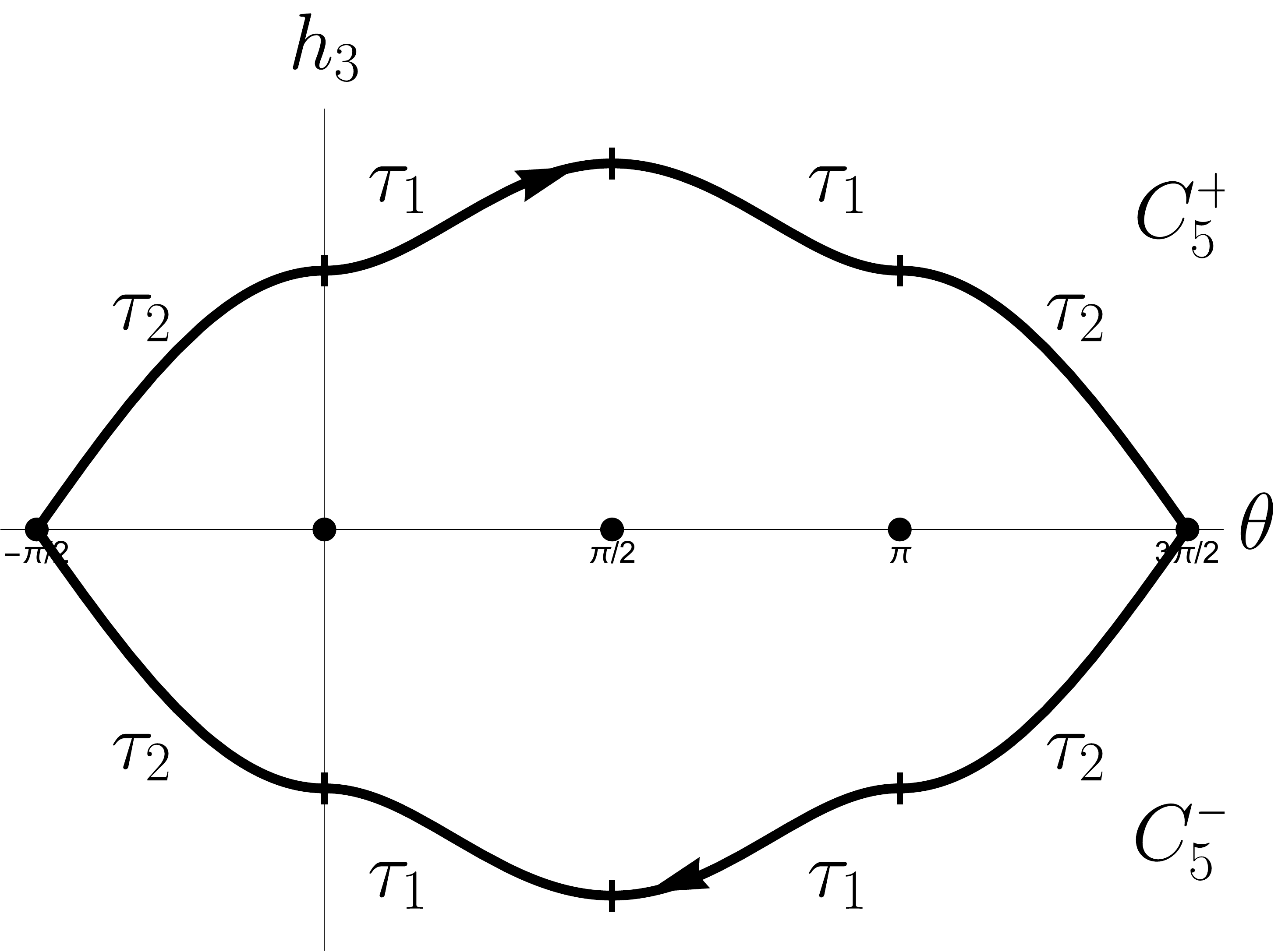}{Case 2), level line $C_5$}{fig:2)C3}{0.7}
{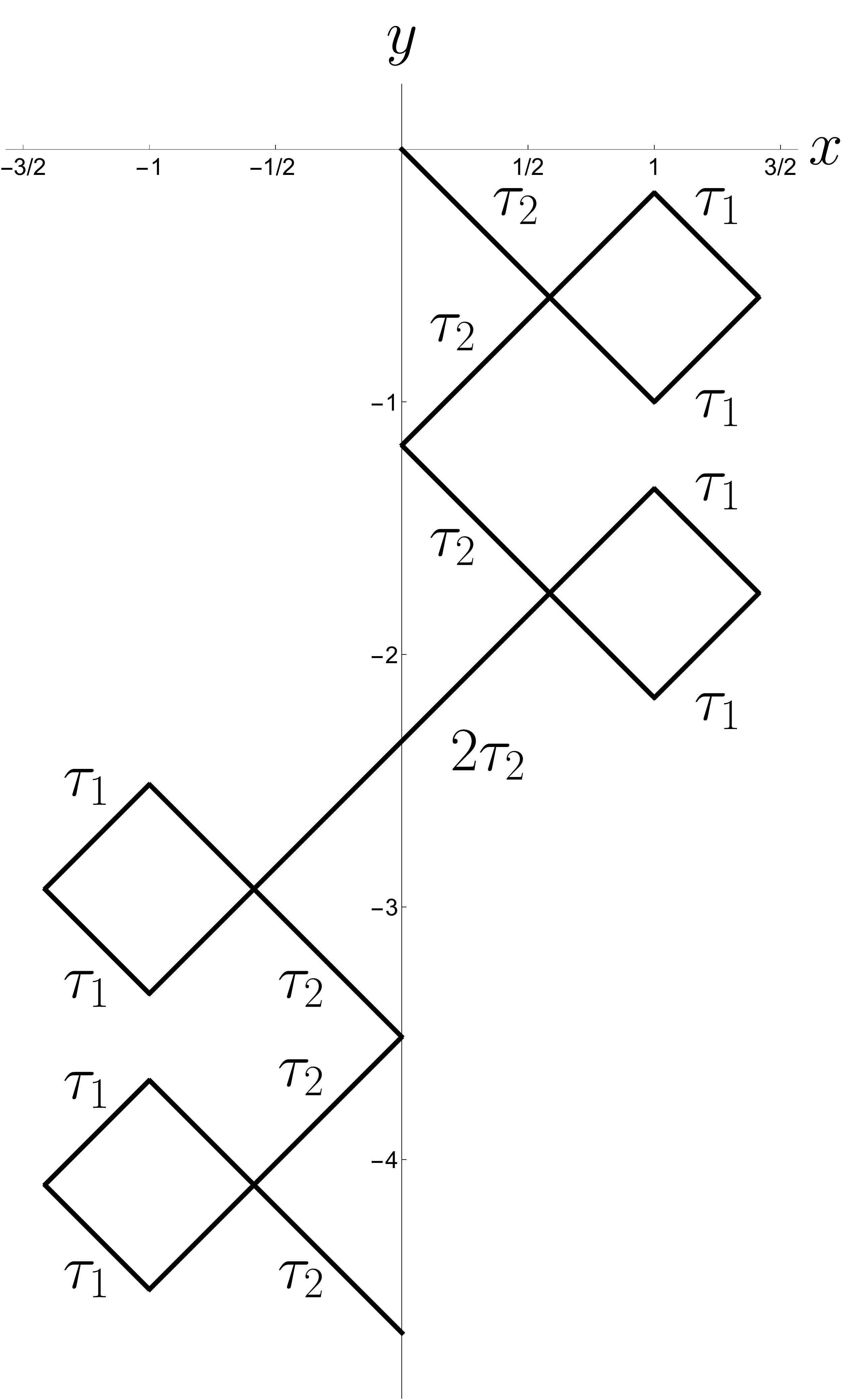}{$(x(t), y(t))$: Case 2), level line $C_5$}{fig:xy2)C5}{0.6}

There are decompositions
\begin{gather*}
C_5 = C_5^+ \cup C_5^-, \quad C_5^+ = C_5 \cap \{ h_3 \ge 0 \}, \quad C_5^- = C_5 \cap \{ h_3 \le 0 \}.
\end{gather*}
The curves $C_5^+$, $C_5^-$ are homeomorphic to $S^1$, with the only singularity --- a corner point $(\theta, h_3) = (\frac{3\pi}{2}, 0)$.

Bang-bang controls are obtained by choosing a finite segment of the following periodic graph:
\medskip
$$
\xymatrix{
&&\tau_1 & \tau_1 & \tau_2 & \tau_2 & \tau_1  & \\
C_5^+ & \cdots \ar@{>}[r] & (+,+) \ar@{>}[r]& (-,+) \ar@{>}[r]& (-,-)\ar@{>}[r] \ar@{>}[dr]& (+,-)\ar@{>}[r]& (+,+)\ar@{>}[r]& \cdots  \\
C_5^- & \cdots \ar@{>}[r] & (-,+) \ar@{>}[r]& (+,+) \ar@{>}[r]& (+,-) \ar@{>}[r] \ar@{>}[ur] & (-,-) \ar@{>}[r]& (-,+)\ar@{>}[r]&\cdots  \\
&&\tau_1 & \tau_1 & \tau_2 & \tau_2 & \tau_1  & 
}
$$
\medskip

We have
\begin{gather*}
\tau_1 = \frac{2 - \sqrt{2}}{\sqrt{h_4}}, \qquad \tau_2 = \sqrt{\frac{2}{h_4}}.
\end{gather*}

The  curves $(x(t), y(t))$ corresponding to the curves $C_5^+$ and $C_5^-$ are shown in Figs.~\ref{fig:xy2)C5+} and  \ref{fig:xy2)C5-} respectively. An example of curve $(x(t), y(t))$ corresponding to two curves $C_5^+$ and two curves  $C_5^-$ is given in Fig.~\ref{fig:xy2)C5}.

\twofiglabelsize
{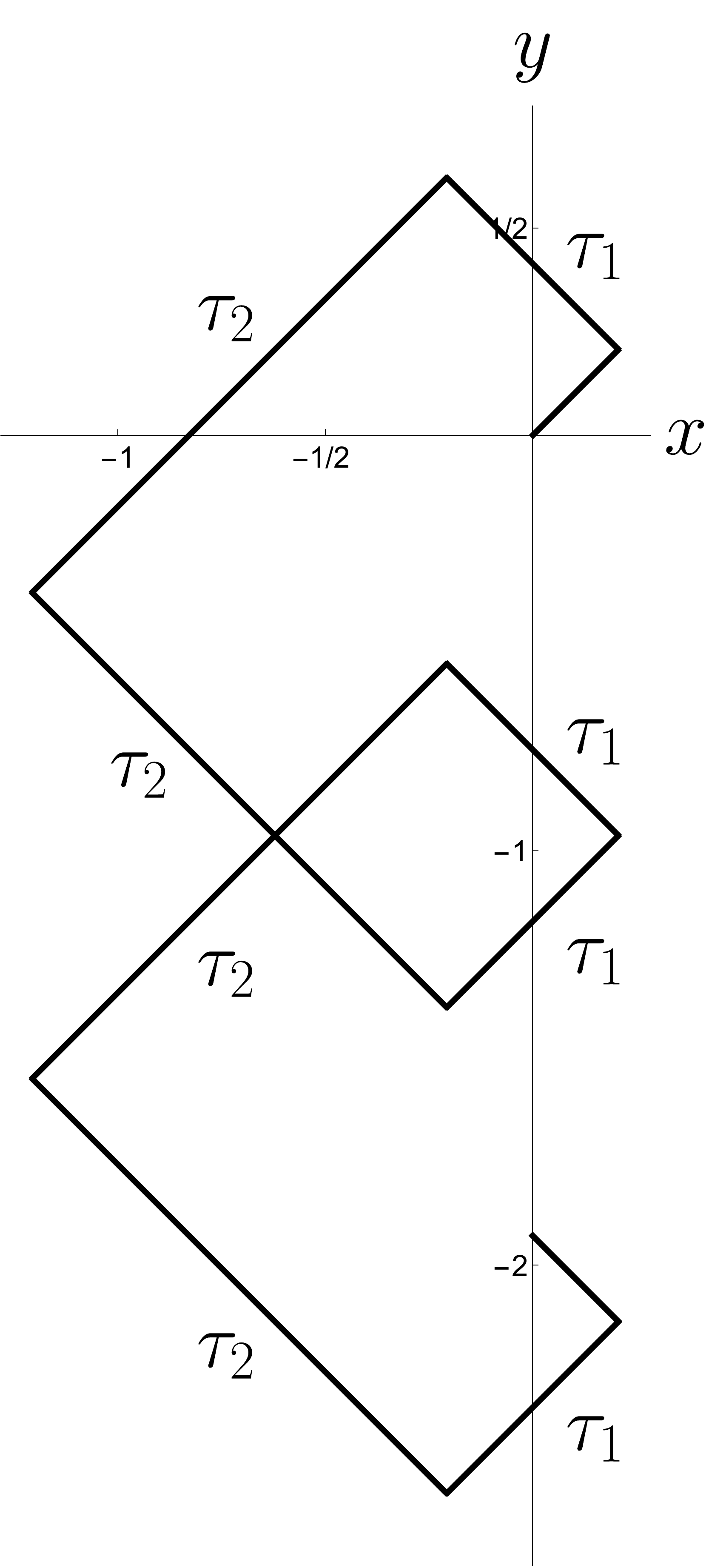}{$(x(t), y(t))$: Case 2), level line $C_5^+$}{fig:xy2)C5+}{0.4}
{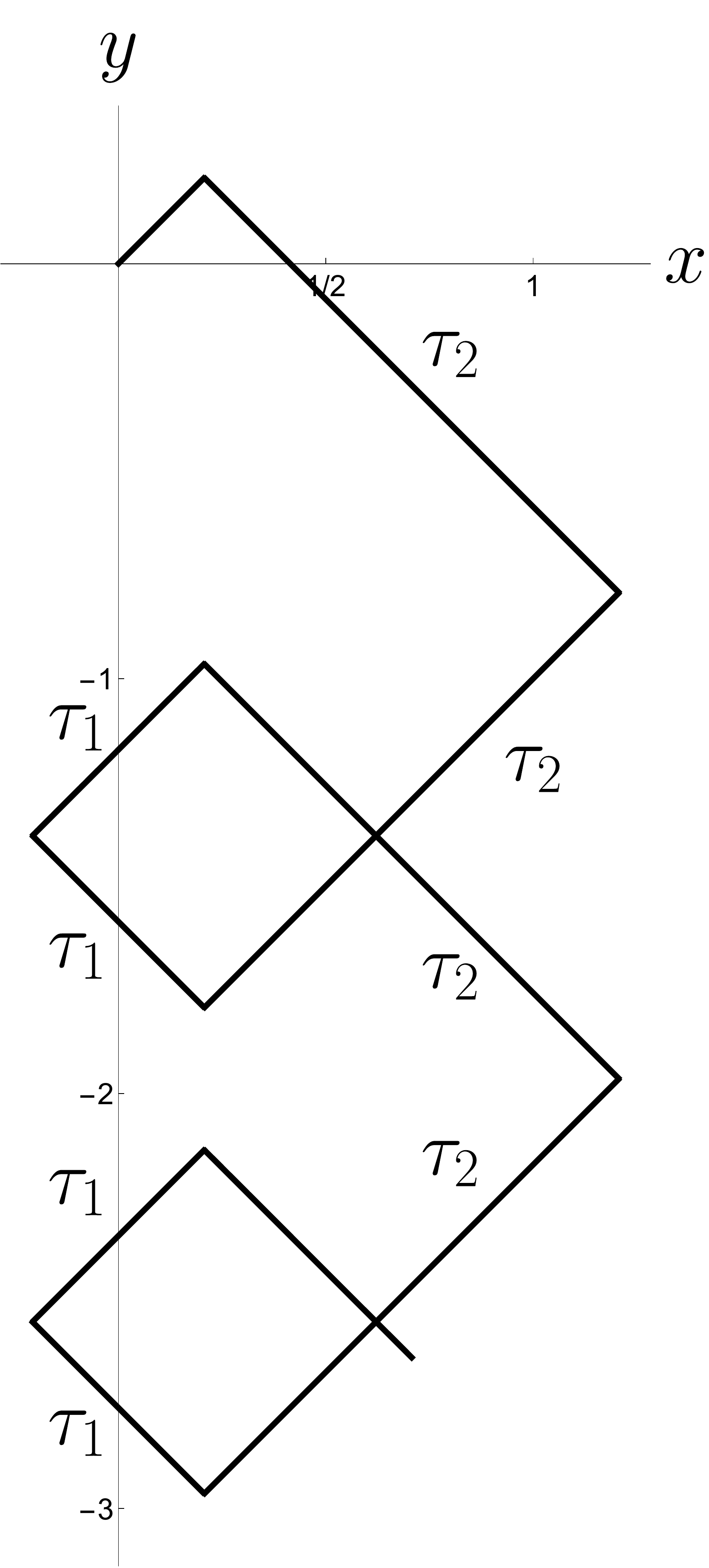}{$(x(t), y(t))$: Case 2), level line $C_5^-$}{fig:xy2)C5-}{0.4}

\subsubsection{Case $2)$, domain $C_6$}
We have $h_4 > h_5 = 0$, $E > h_4$,  the corresponding trajectory is shown in Figs.~\ref{fig:2)N}, \ref{fig:xy2)C6}.

\twofiglabelsize
{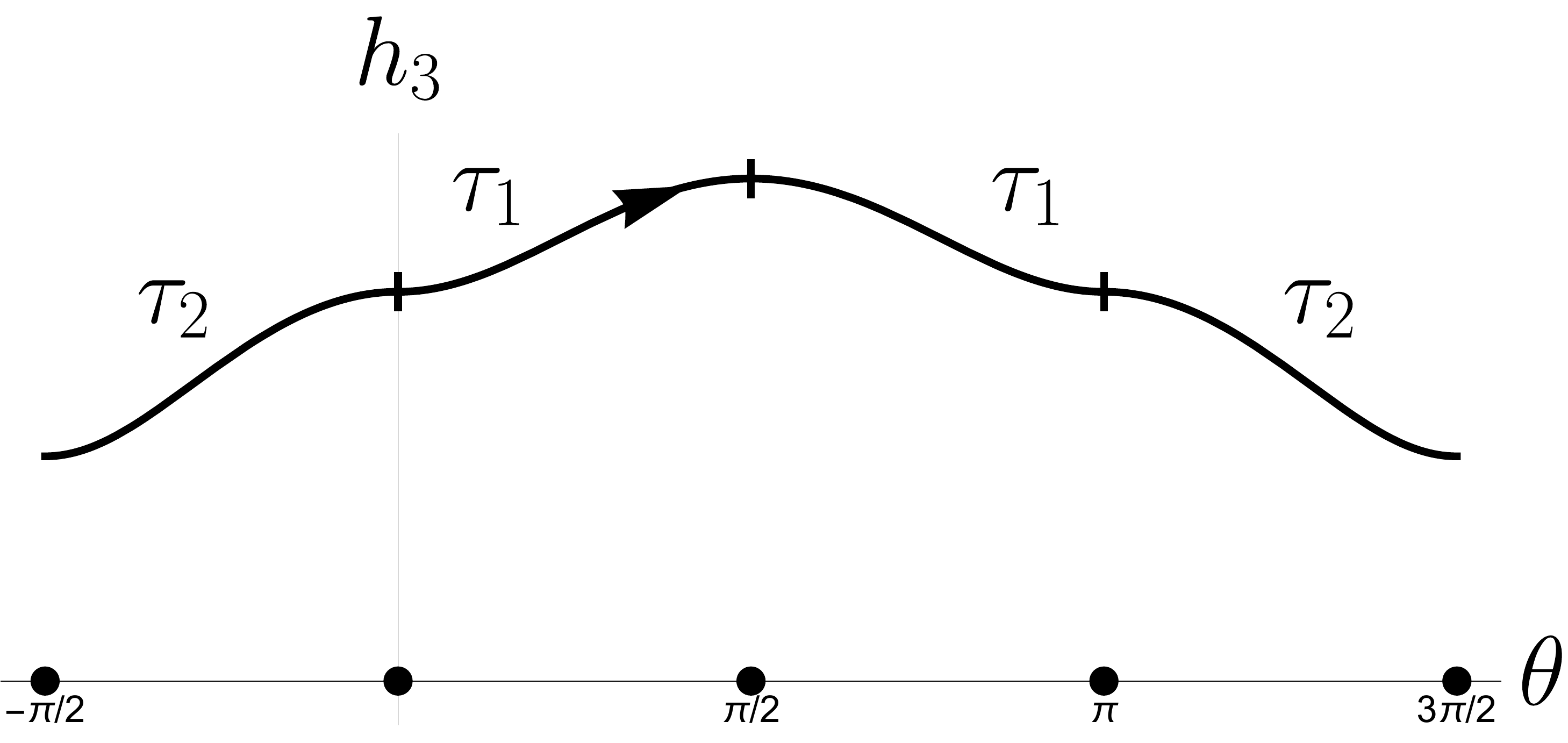}{$(\theta(t), h_3(t))$: Case 2), domain $C_6$}{fig:2)N}{0.7}
{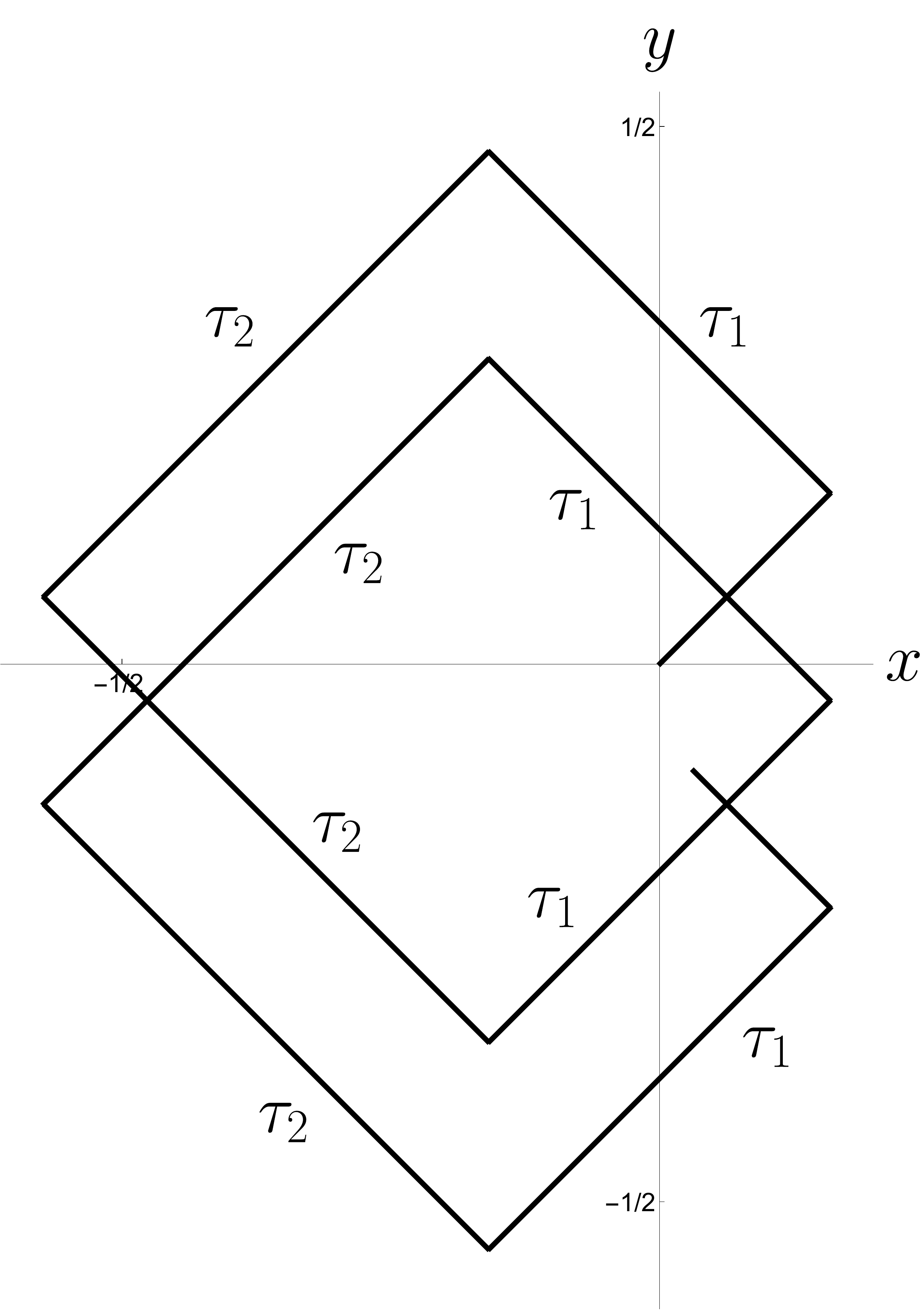}{$(x(t), y(t))$: Case 2), domain $C_6$}{fig:xy2)C6}{0.6}

If $h_3 > 0$, then the control is given by 
\begin{table}[H]
\begin{center}
\begin{tabular}{ccccccccc}
$(u_1, u_2):$ & $\quad$ & $\dots$ & $(+, +)$ & $(-, +)$ & $(-, -)$ & $(+, -)$ & $(+, +)$  &\dots \\
& & & $\tau_1$ & $\tau_1$ & $\tau_2$ & $\tau_2$ & $\tau_1$ &
\end{tabular}
\end{center}
\end{table}
If $h_3 < 0$, then the order of switchings is opposite. We have
\begin{gather*}
\tau_1 = \frac{\sqrt{2(E + h_4)} - \sqrt{2E}}{h_4} =\frac{2}{\sqrt{2(E + h_4)} + \sqrt{2E}}, \\
\tau_2 = \frac{\sqrt{2E} - \sqrt{2(E - h_4)}}{h_4} = \frac{2}{\sqrt{2E} + \sqrt{2(E - h_4)}}.
\end{gather*}

\subsection{Case $3)$}
Let $h_4 = h_5 > 0$. Then system~\eq{Hamtheta} has the phase portrait given in Fig.~\ref{fig:3)h3th}, see Subsubsec.~7.2.3~\cite{SFCartan1}.

\onefiglabelsize
{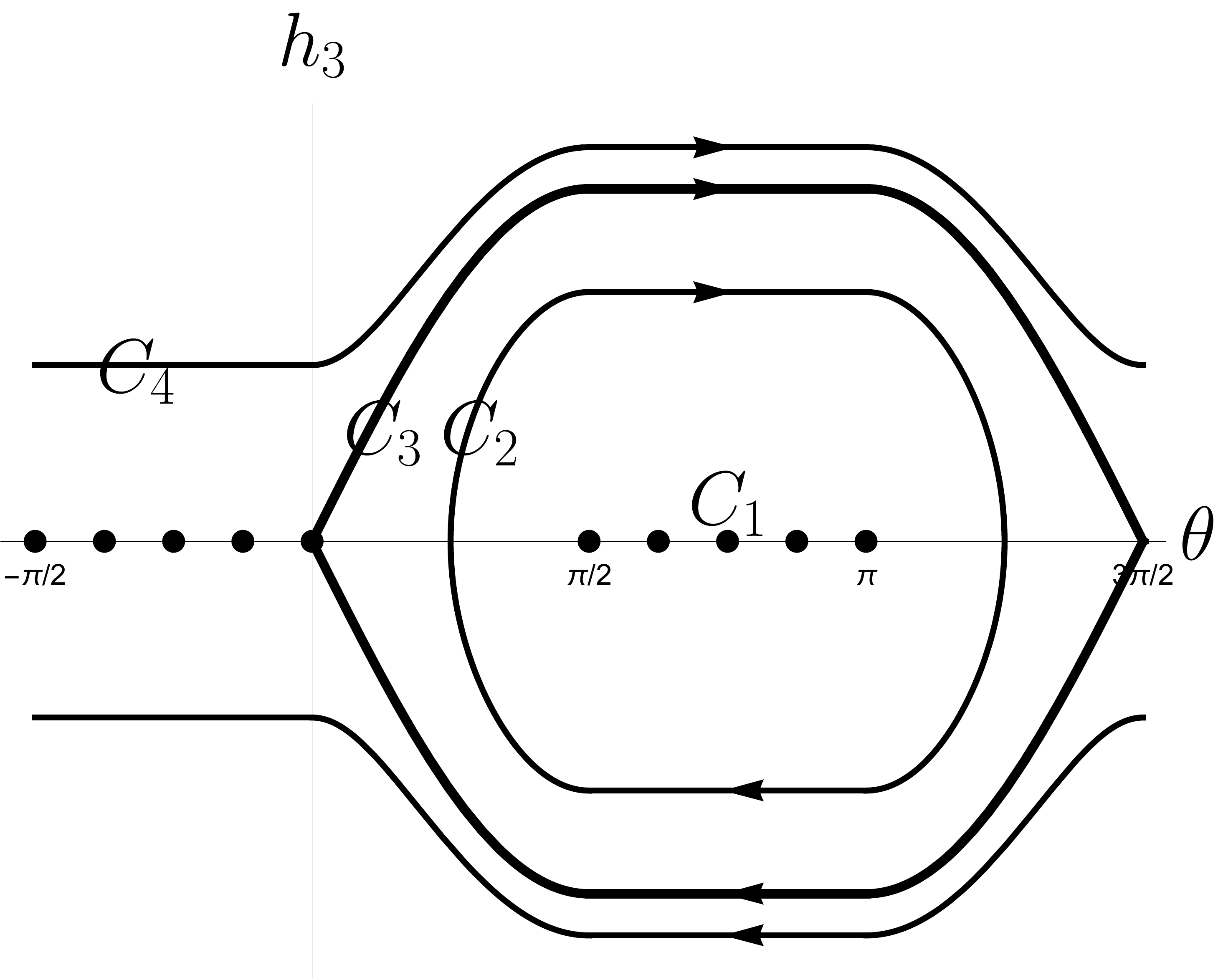}{Phase portrait of system \eq{Hamtheta} in case 3)}{fig:3)h3th}{0.35}

The domain $\{ \lambda \in C \mid h_4 = h_5 > 0 \}$ of the cylinder $C = L^* \cap \{H = 1\}$ admits a decomposition defined by the energy integral $E$:
\begin{align*}
&\{ \lambda \in C \mid h_4 = h_5 > 0 \} = \cup_{i=1}^4 C_i, \\
&C_1 = E^{-1} (-h_4),   \qquad C_2 = E^{-1} (-h_4, 0), \qquad C_3 = E^{-1} (h_4),   \qquad C_4 = E^{-1}   (h_4, +\infty).
\end{align*}

\subsubsection{Case $3)$, level set $C_1$}
Let $h_4 = h_5 > 0$, $E = - h_4$. Then $\theta \equiv \const \in [\pi/2,\pi]$, $h_3 \equiv 0$.

If $\theta \in (\pi/2, \pi)$, then $(x(t), y(t)) = (-t, t)$. 
And if $\theta = \pi/2$ ($\theta = \pi$), we get an $h_1$-singular (resp. $h_2$-singular) trajectory.

\subsubsection{Case $3)$, domain $C_2$}
We have $h_4 = h_5 > 0$, $E \in (-h_4, h_4)$,  the corresponding trajectory is shown in Figs.~\ref{fig:3)I}, \ref{fig:xy3)C2}.

\twofiglabelsize
{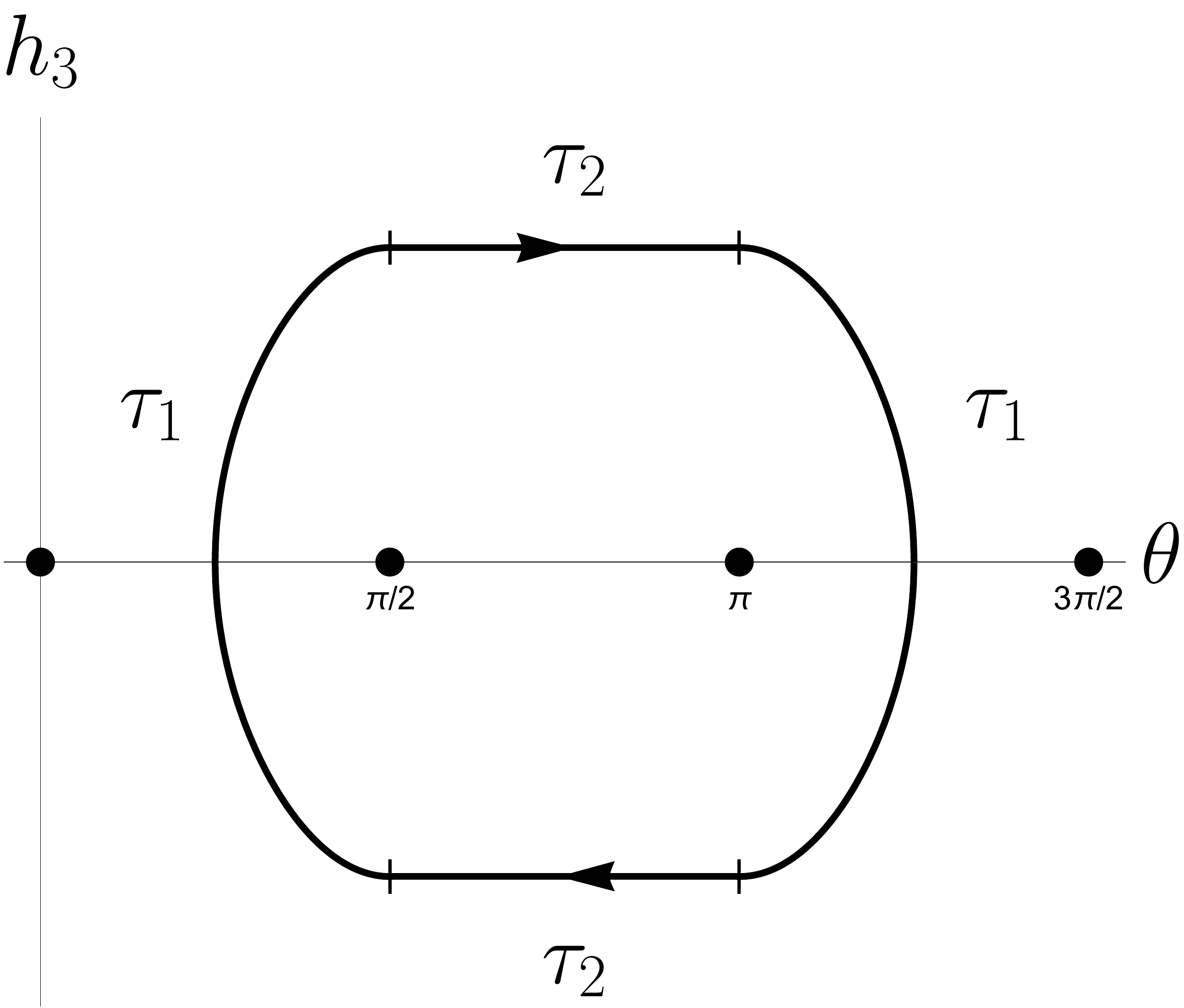}{$(\theta(t), h_3(t))$: Case 3), domain $C_2$}{fig:3)I}{0.6}
{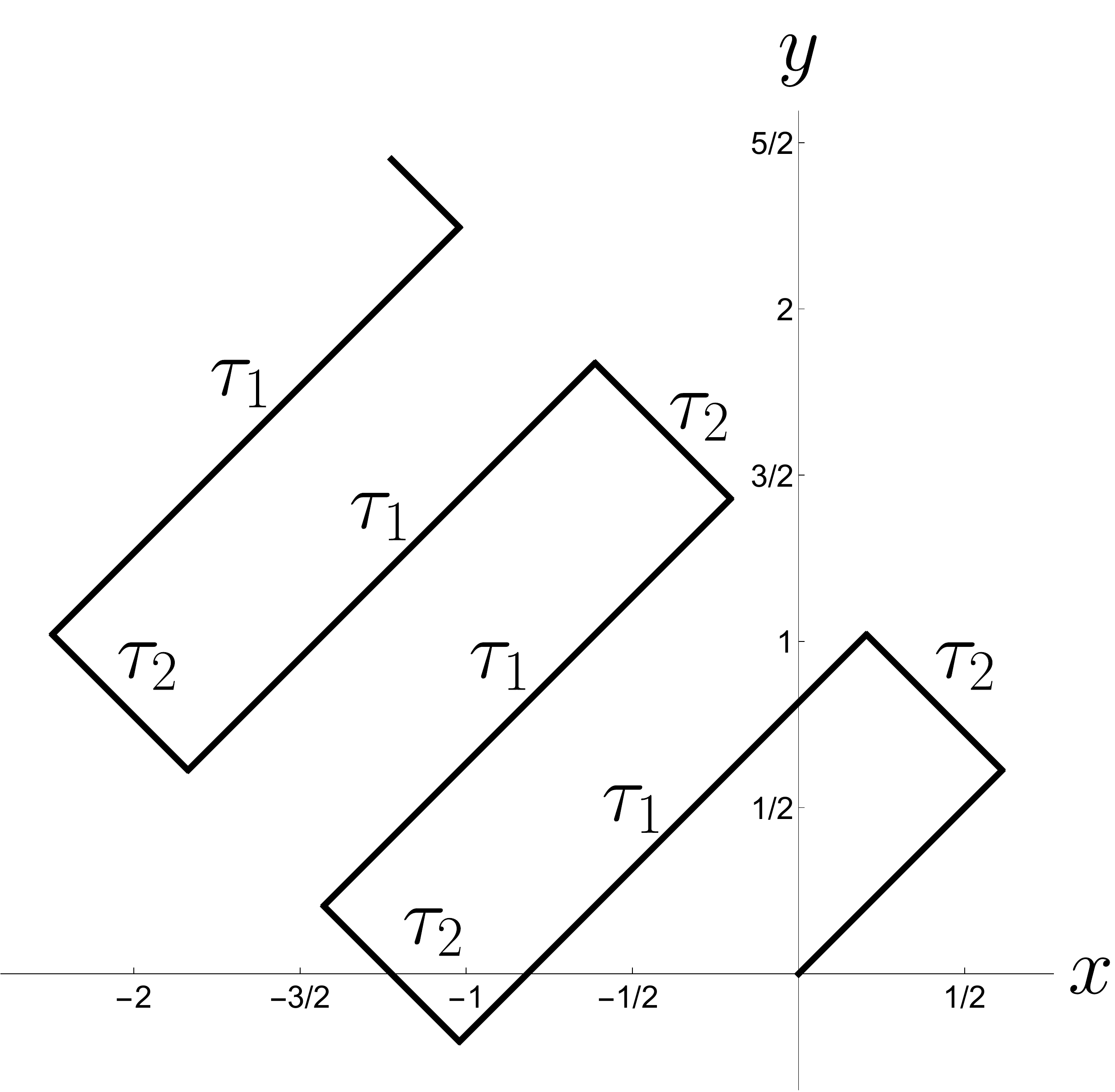}{$(x(t),y(t))$: Case 3), domain $C_2$}{fig:xy3)C2}{0.6}

Then the control has the form
\begin{table}[H]
\begin{center}
\begin{tabular}{ccccccccc}
$(u_1, u_2):$ & $\quad$ & $\dots$ & $(+, +)$ & $(-, +)$ & $(-, -)$ & $(-, +)$ & $(+, +)$  &\dots \\
& & & $\tau_1$ & $\tau_2$ & $\tau_1$ & $\tau_2$ & $\tau_1$ &
\end{tabular}
\end{center}
\end{table}
with
\begin{gather*}
\tau_1 = \frac{\sqrt{2(E + h_4)}}{h_4}, \quad \tau_2 = \frac{1}{\sqrt{2(E + h_4)}}.
\end{gather*}

\subsubsection{Case $3)$, level line $C_3$}
We have $h_4 = h_5 > 0$, $E = h_4$,  the corresponding trajectory is shown in Figs.~\ref{fig:3)C2}, \ref{fig:xy3)C3}.

\twofiglabelsize
{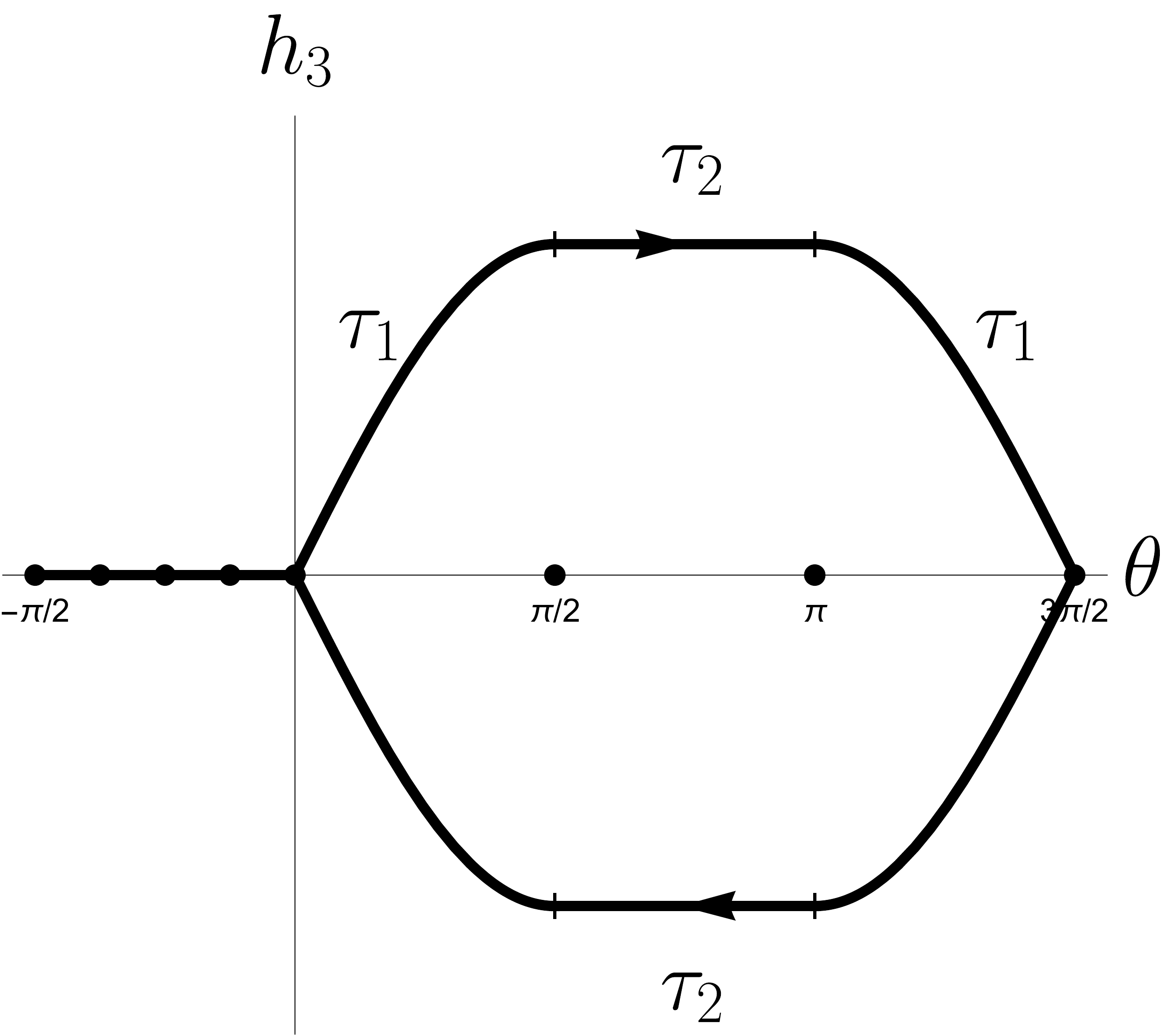}{$(\theta(t), h_3(t))$: Case 3), level line $C_3$}{fig:3)C2}{0.6}
{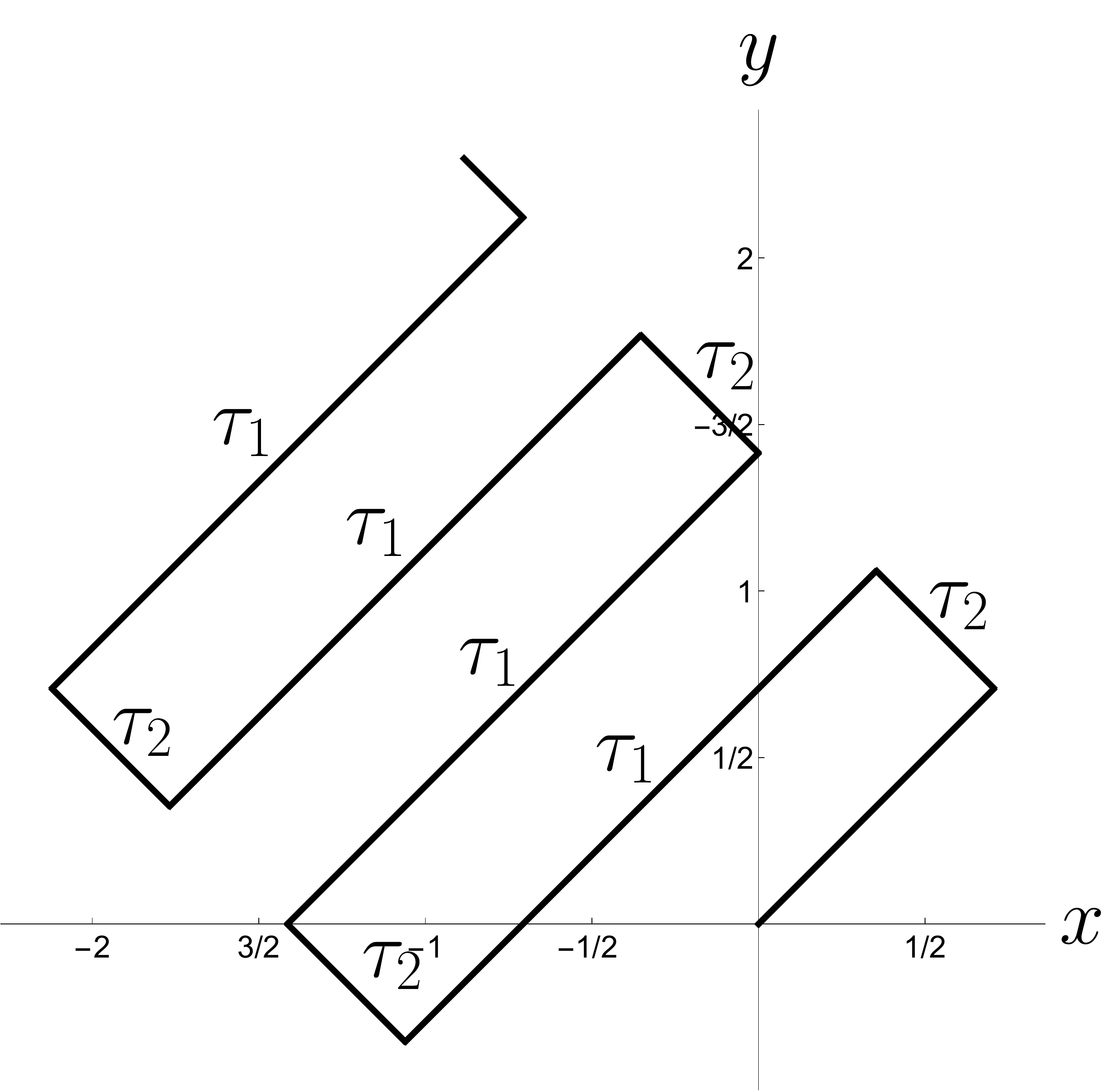}{$(x(t), y(t))$: Case 3), level line $C_3$}{fig:xy3)C3}{0.6}

The level line $C_3$ consists of the segment $\{ \theta \in [-\frac{\pi}{2}, 0], h_3 = 0 \}$ and the curve $\{ (\theta, h_3) \mid h_3 = \pm 2\sqrt{h_4} \sin \theta, \theta \in [0, \frac{3\pi}{2}] \}$ homeomorphic to $S^1$, with two singularities --- corner points $(\theta, h_3) = (0, 0)$ and $(\theta, h_3) = (\frac{3\pi}{2}, 0)$.

The bang-bang control is given by:
\begin{table}[H]
\begin{center}
\begin{tabular}{ccccccccc}
$(u_1, u_2):$ & $\quad$ & $\dots$ & $(+, +)$ & $(-, +)$ & $(-, -)$ & $(-, +)$ & $(+, +)$  &\dots \\
& & & $\tau_1$ & $\tau_2$ & $\tau_1$ & $\tau_2$ & $\tau_1$ &
\end{tabular}
\end{center}
\end{table}
with
\begin{gather*}
\tau_1 = \frac{2}{\sqrt{h_4}}, \quad \tau_2 = \frac{1}{2\sqrt{h_4}}.
\end{gather*}

\subsubsection{Case $3)$, domain $C_4$}
We have $h_4 = h_5 > 0$, $E > h_4$,  the corresponding trajectory is shown in Figs.~\ref{fig:3)N}, \ref{fig:xy3)C4}.

\twofiglabelsize
{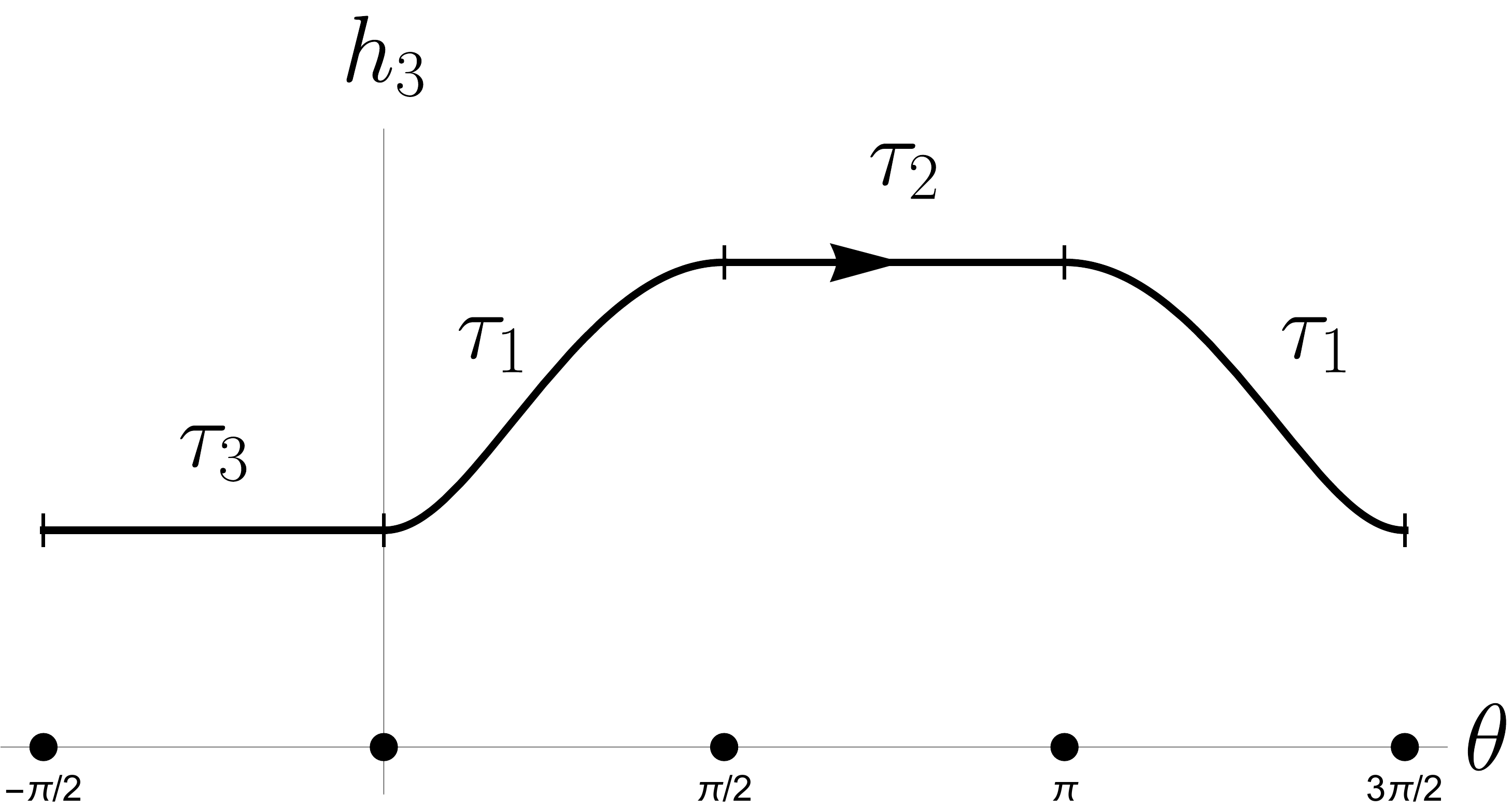}{$(\theta(t), h_3(t))$: Case 3), domain $C_4$}{fig:3)N}{0.7}
{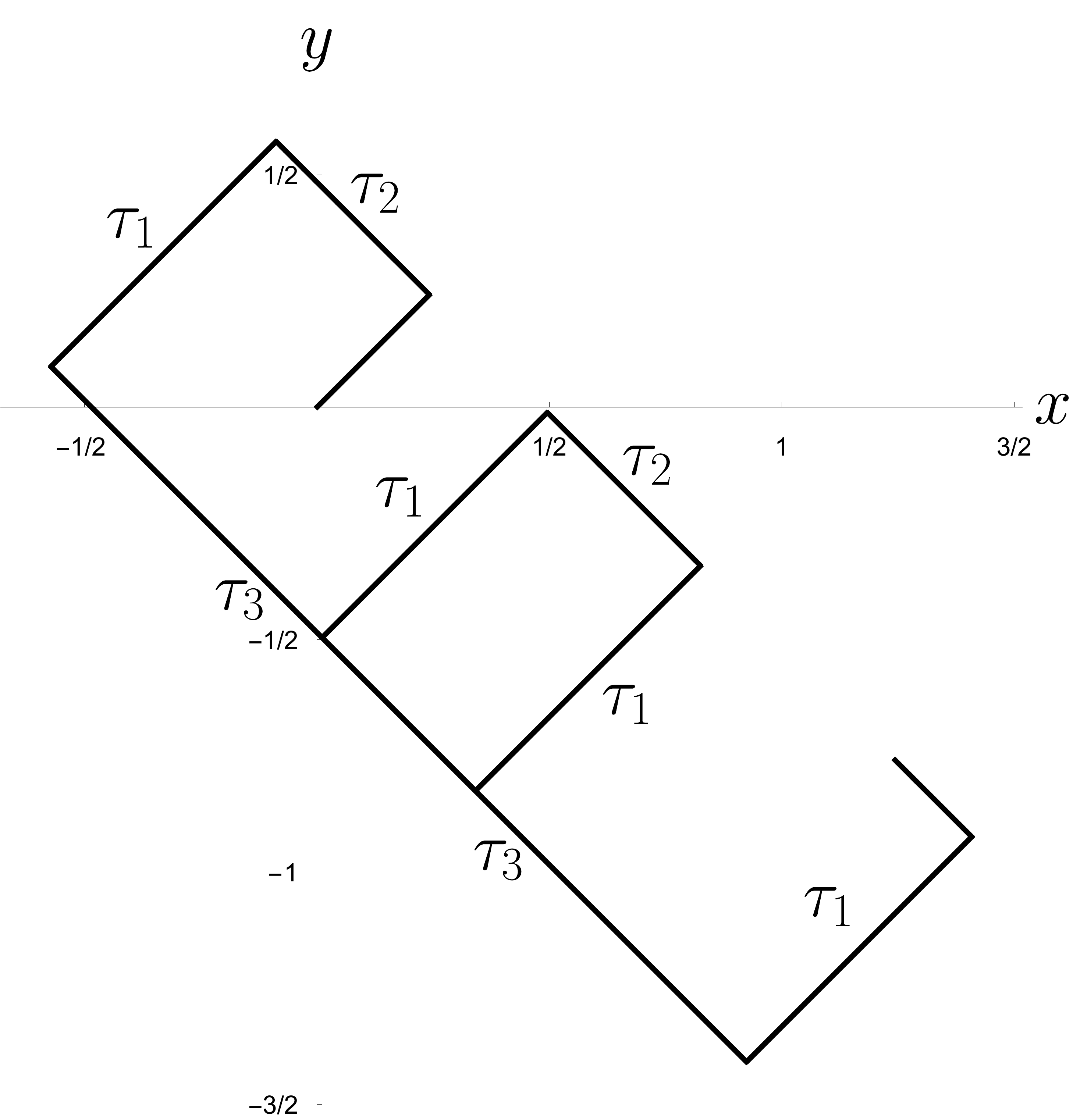}{$(x(t), y(t))$: Case 3), domain $C_4$}{fig:xy3)C4}{0.7}

If $h_3 > 0$, then the control is given by: 
\begin{table}[H]
\begin{center}
\begin{tabular}{ccccccccc}
$(u_1, u_2):$ & $\quad$ & $\dots$ & $(+, +)$ & $(-, +)$ & $(-, -)$ & $(+, -)$ & $(+, +)$  &\dots \\
& & & $\tau_1$ & $\tau_2$ & $\tau_1$ & $\tau_3$ & $\tau_1$ &
\end{tabular}
\end{center}
\end{table}
If $h_3 < 0$, then the order of switchings is opposite. We have
\begin{gather*}
\tau_1 = \frac{\sqrt{2(E + h_4)} - \sqrt{2(E - h_4)}}{2h_4} =\frac{2}{\sqrt{2(E + h_4)} + \sqrt{2(E - h_4)}}, \\
\tau_2 = \frac{1}{\sqrt{2(E + h_4)}}, \quad \tau_3 = \frac{1}{\sqrt{2(E - h_4)}}.
\end{gather*}

\subsection{Case $4)$}\label{subsec:4)}
Let $h_4 = h_5 = 0$. Then system~\eq{Hamtheta} has the phase portrait given in Fig.~\ref{fig:4)h3th}, see Subsubsec.~7.2.4~\cite{SFCartan1}.

\onefiglabelsize
{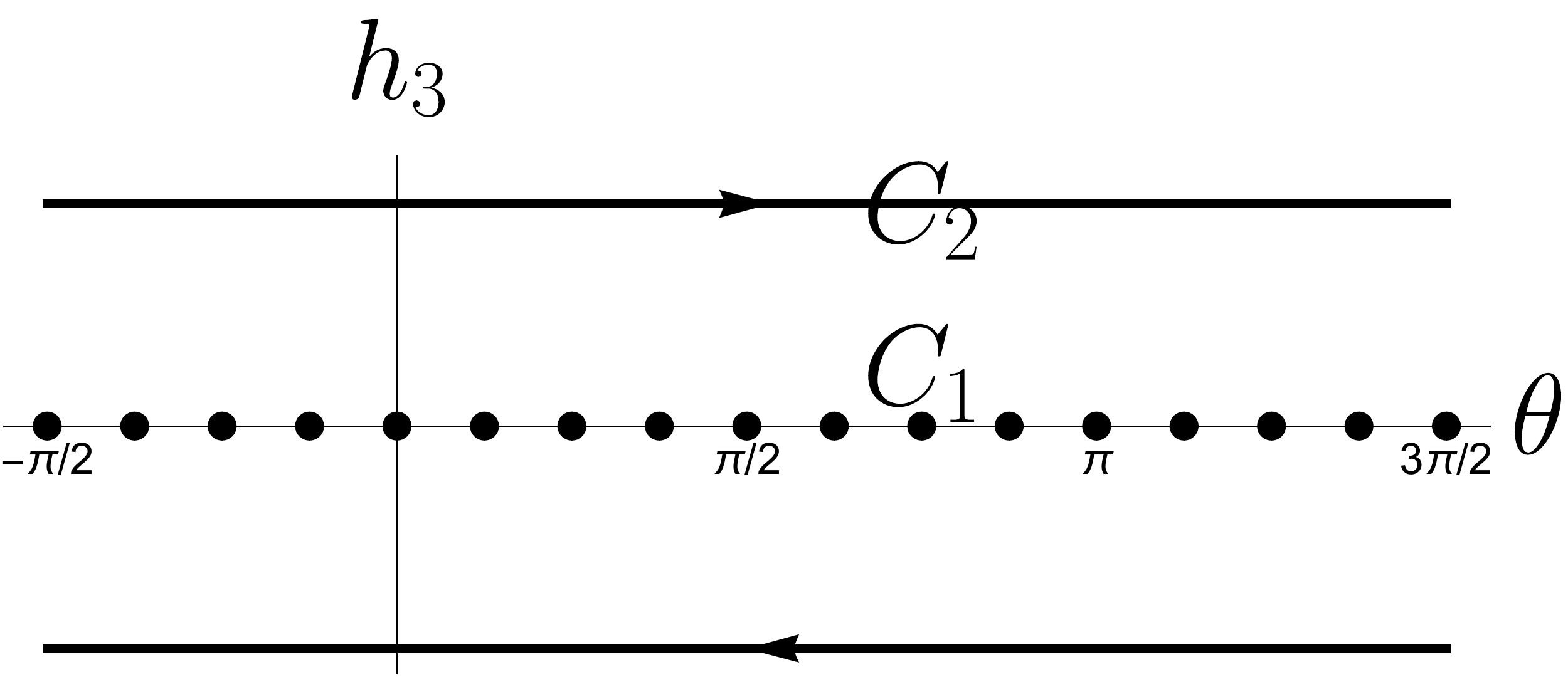}{Phase portrait of system \eq{Hamtheta} in case 4)}{fig:4)h3th}{0.3}

The domain $\{ \lambda \in C \mid h_4 = h_5 = 0 \}$ of the cylinder $C = L^* \cap \{H = 1\}$ admits a decomposition defined by the energy integral $E$:
\begin{align*}
&\{ \lambda \in C \mid h_4 = h_5 = 0 \} = C_1 \cup C_2, \\
&C_1 = E^{-1}(0), \qquad C_2 = E^{-1}(0, +\infty).
\end{align*}

\subsubsection{Case $4)$, level set $C_1$}
We have $h_4 = h_5 = 0$, $E = 0$.
The level set $C_1$ consists of fixed points $(\theta, h_3)$, $h_3 = 0$, $\theta \ne \frac{\pi n}{2}$ that correspond to bang trajectories (which are simultaneously abnormal), and fixed points $(\theta, h_3) = (\frac{\pi n}{2}, 0)$ that correspond to singular trajectories (which are simultaneously abnormal as well).

\subsubsection{Case $4)$, domain $C_2$}
We have $h_4 = h_5 = 0$, $E > 0$,  the corresponding trajectory is shown in Figs.~\ref{fig:4)N}, \ref{fig:xy4)C2}.

\twofiglabelsize
{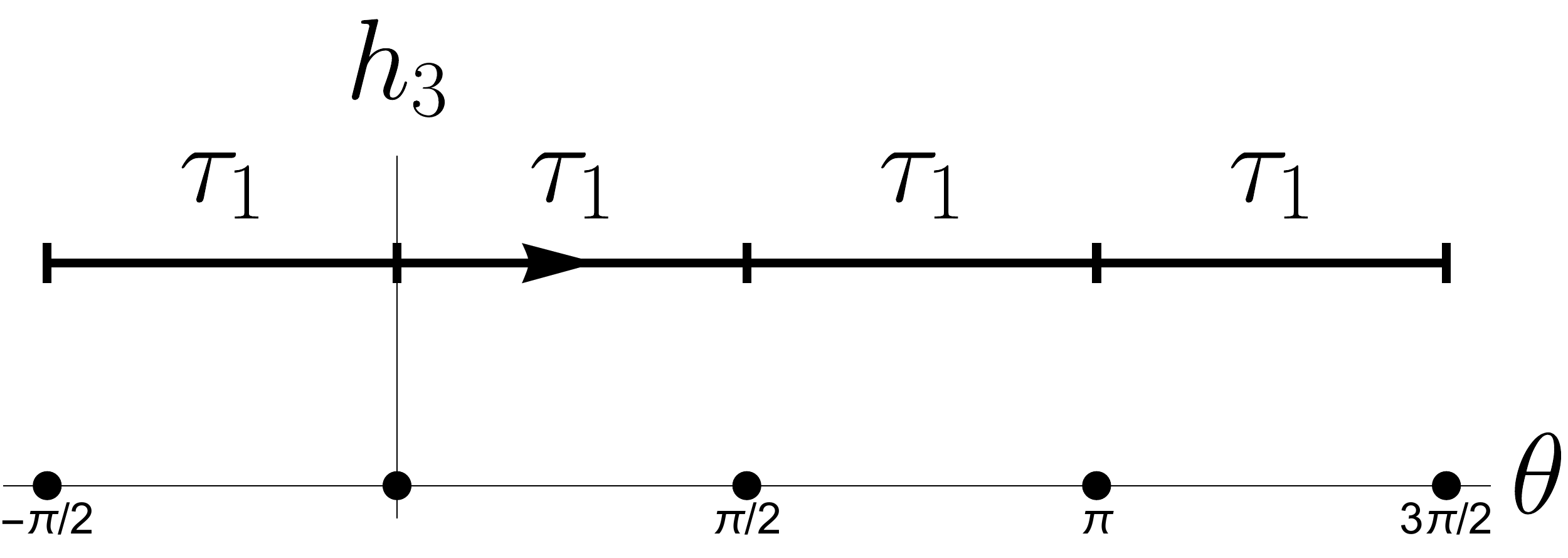}{$(\theta(t), h_3(t))$: Case 4), domain $C_2$}{fig:4)N}{0.6}
{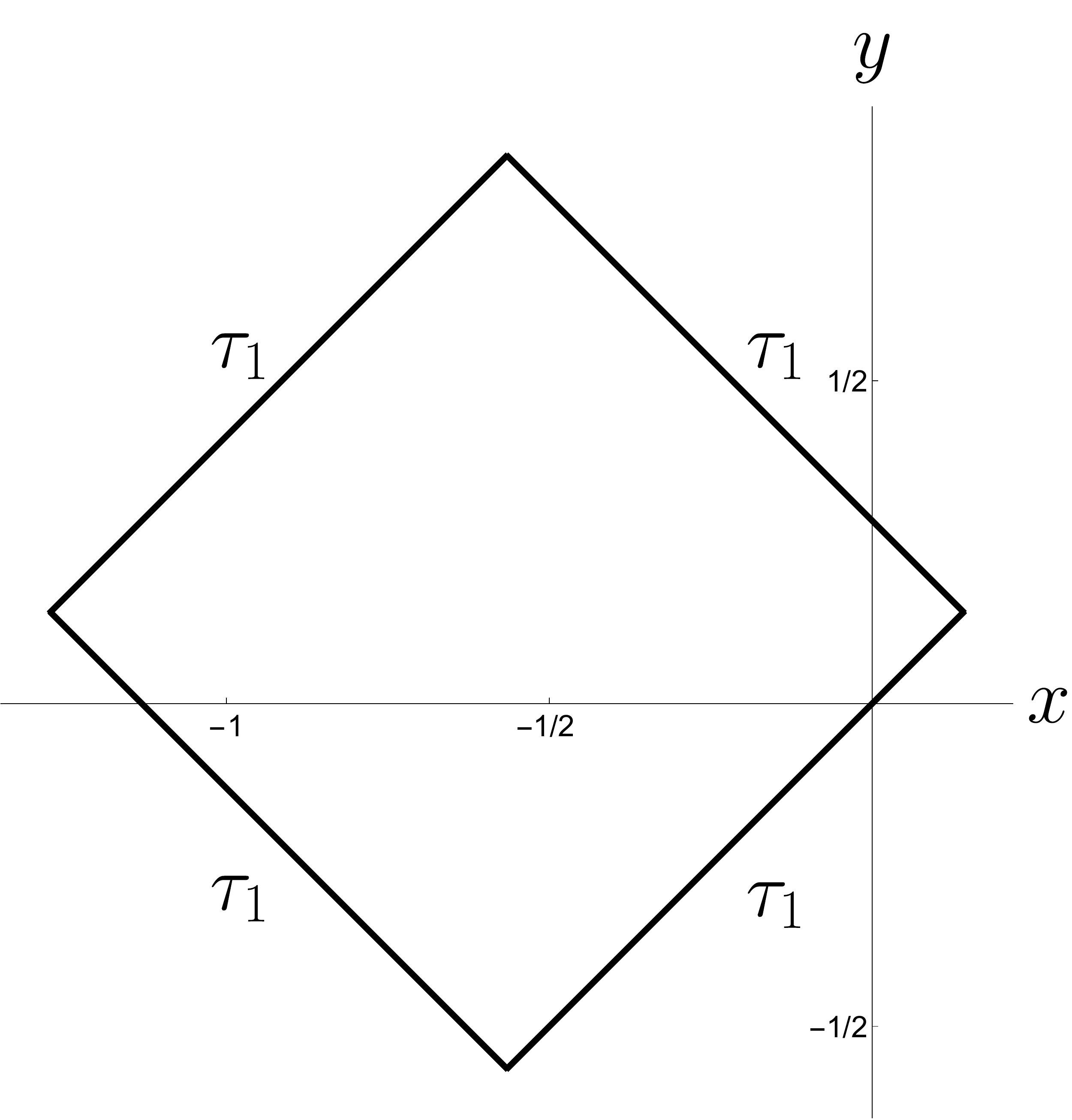}{$(x(t),y(t))$: Case 4), domain $C_2$}{fig:xy4)C2}{0.6}

If $h_3 > 0$, then the control $u$ is given by:
\begin{table}[H]
\begin{center}
\begin{tabular}{ccccccccc}
$(u_1, u_2):$ & $\quad$ & $\dots$ & $(+, +)$ & $(-, +)$ & $(-, -)$ & $(+, -)$ & $(+, +)$  &\dots \\
& & & $\tau_1$ & $\tau_1$ & $\tau_1$ & $\tau_1$ & $\tau_1$ &
\end{tabular}
\end{center}
\end{table}
If $h_3 < 0$, then the order of switchings is opposite. We have
\begin{gather*}
\tau_1 = \frac{1}{\sqrt{2E}}.
\end{gather*}

On the basis of the results obtained in this section we obtain the following statement.

\begin{corollary}\label{cor:tau}
For all bang-bang trajectories, duration of bang arcs is a function of Casimirs: $\tau_i = \tau_i(h_4, h_5, E)$, except the first and the last arcs.
\end{corollary}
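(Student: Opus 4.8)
The plan is to prove the statement by direct inspection of the case analysis carried out in Section~\ref{sec:structure}, which is exhaustive: by the symmetry reduction described after system~\eqref{Hamtheta}, every bang-bang extremal is, up to a symmetry of the square $\{\,|h_1|+|h_2|=1\,\}$, governed by~\eqref{Hamtheta} with $h_4 \ge h_5 \ge 0$, and this splits into the four sub-cases 1)--4). Within each sub-case the cylinder $C$ was further decomposed according to the value of the energy integral $E$, and for each piece of the decomposition the Propositions and the control tables of Section~\ref{sec:structure} exhibit the bang-bang control as a finite segment of an explicit (two-sided infinite, possibly branching) periodic sequence of constant-control arcs, with durations $\tau_1,\tau_2,\tau_3,\tau_4$ computed in closed form. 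So the first step is simply to go through these formulas one by one and observe that every right-hand side --- from $\tau_1=\frac{2\sqrt{2(E+h_4)}}{h_4+h_5}$ in case~1), $C_2$, down to $\tau_1=\frac{1}{\sqrt{2E}}$ in case~4), $C_2$ --- depends only on $h_4$, $h_5$, $E$, and neither on the choice of the finite segment nor on the position along the arc. Hence each interior arc (every arc of the chosen finite segment except the first and the last) has duration a fixed function $\tau_i(h_4,h_5,E)$, while the Propositions already record that the first and last arcs may take arbitrary values in the half-open intervals $(0,\tau_i]$ attached to the neighbouring interior arc; those are genuinely free parameters and are exactly the ones excluded from the claim.

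The second step is to transport this from the reduced region $h_4\ge h_5\ge 0$ to the whole of $C$. A symmetry $g$ of the square acts on $(h_1,h_2,h_3,h_4,h_5)$ by a linear map permuting and/or changing signs of the coordinates; under $g$ the Casimirs $h_4,h_5,E$ are carried to Casimirs $h_4',h_5',E'$ of the image extremal, and the bang durations are invariant under $g$, being intrinsic time-lengths of arcs. Since $g$ also sends the first and last arc of a control segment to the first and last arc of the image segment, the clause ``except the first and the last arcs'' is preserved, and the closed-form expressions obtained in $h_4\ge h_5\ge 0$ yield $\tau_i$ as a function of the Casimirs in general. Finally, one notes that the level sets consisting of equilibria of~\eqref{Hamtheta} --- the sets $C_1$ in cases 1)--4), and the corner regimes with $\theta\equiv\mathrm{const}$ --- give rise to singular or abnormal trajectories rather than bang-bang ones, so they do not enter the statement.

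This argument is essentially bookkeeping, collecting the computations already performed in Section~\ref{sec:structure}; the only point that needs a moment's care is the symmetry-transport step, where one must check that the ``first/last arc'' exception commutes with the action of the symmetry group of the square, which it does for the reason just given. I therefore expect no real obstacle, only the need to present the verification uniformly across all the sub-cases.
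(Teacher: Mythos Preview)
Your proposal is correct and matches the paper's approach: the corollary is stated immediately after the case analysis of Section~\ref{sec:structure} with the remark that it follows ``on the basis of the results obtained in this section,'' i.e., by inspection of the explicit formulas for the $\tau_i$ in each sub-case. Your write-up is simply a more explicit version of this same bookkeeping, including the symmetry-transport step that the paper leaves implicit.
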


\section{Optimality of bang-bang trajectories}
In this section we obtain upper bounds on the number of switchings on bang-bang minimizers.

\subsection{Bang-bang trajectories with low energy $E$}
\begin{theorem}
\label{th:opt_low_en}
If a bang-bang extremal $\lambda_t$, $t \in [0, +\infty)$, satisfies the inequality 
\begin{gather}
\min (-|h_4|, -|h_5|) < E \le \max (-|h_4|, -|h_5|) \label{minEmax}
\end{gather}
then it is optimal, i.e., $t_{\cut} (\lambda_0) = +\infty$.
\end{theorem}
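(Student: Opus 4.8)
The condition \eqref{minEmax} singles out, in each of the four cases, the lowest non-trivial energy stratum: in case~1) it is $C_2 = E^{-1}(-h_4,-h_5)$ (together with possibly $C_1$ or $C_3$, which are singular and excluded), in case~2) it is $C_2 = E^{-1}(-h_4,0)$, and in cases~3),4) the hypothesis is vacuous or forces a singular/abnormal arc. So the substantive content is: a bang-bang extremal lying in the stratum $C_2$ of case~1) or case~2) is globally optimal. The plan is to use the explicit structure of these trajectories from Section~3 together with a direct comparison argument. In case~1)$C_2$ the control word is the periodic sequence $\dots(+,+)_{\tau_1}(-,+)_{\tau_2}(+,+)_{\tau_1}(-,+)_{\tau_2}\dots$ and in case~2)$C_2$ it is $\dots(+,+)_{\tau_1}(-,+)_{\tau_1}(+,+)_{\tau_1}\dots$; in both, the second control $u_2$ is \emph{constantly $+1$}. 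Thus these trajectories are simultaneously of type~(i) — one component of the control is constant — and hence are singular/bang, so optimality is already known: a trajectory with $u_2\equiv 1$ cannot be shortened, because any admissible curve reaching $q_1$ from $q_0$ needs time at least $\int |u_2|\,dt \ge$ the total increment of the $y$-coordinate divided by~$1$, which is attained precisely when $u_2\equiv 1$.

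So the first step I would carry out is to go case by case through \eqref{minEmax} and verify that the hypothesis forces $\lambda_t$ to lie in a stratum on which one of $h_1,h_2$ (equivalently one of the controls $u_1,u_2$) never changes sign; from Section~3 this is immediate for case~1)$C_2$ (here $\theta(t)\in(0,\pi/2)$ throughout, so $h_2>0$ and $h_1$ alternates sign), case~2)$C_2$ (same picture), and the boundary strata $C_1,C_3$ are singular and thus excluded from the bang-bang hypothesis or again have a constant control. The second step is the optimality argument itself: if $u_2(t)\equiv\sgn h_2$ is constant on $[0,T]$, then since $\dot y = u_2$ along the flow and $|u_2|\le 1$ for any admissible control, any competitor $\tilde q(\cdot)$ from $q_0$ to $q_1$ has $T=\int_0^{\tilde T}u_2\,dt \le \int_0^{\tilde T}|\tilde u_2|\,dt\le \tilde T$; hence $\lambda_t$ is optimal and $t_{\cut}(\lambda_0)=+\infty$. (The same one-line argument works with $u_1$ and the $x$-coordinate in the symmetric strata.)

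The only real subtlety — and the step I expect to need the most care — is bookkeeping the symmetry reduction: the normalization $h_4\ge h_5\ge 0$ was used in Section~3, but the theorem is stated for all signs of $h_4,h_5$, so one must check that the group of symmetries of the square (which acts on $(h_1,h_2)$ and permutes/reflects the roles of $X_1,X_2$) carries the generic stratum back to case~1)$C_2$ or case~2)$C_2$ and in particular carries ``$u_2$ constant'' to ``one of $u_1,u_2$ constant''. This is routine but must be stated. A secondary point is to make sure the inequality \eqref{minEmax} really does not meet any stratum with genuine switching in both controls: one checks $\max(-|h_4|,-|h_5|) = -\min(|h_4|,|h_5|) = -h_5$ under the normalization, so $E\le -h_5$, which is exactly $C_1\cup C_2$ in case~1) and $C_1\cup C_2$ in case~2) (after identifying $h_5=0$), and on $C_2$ the second control is constant as noted. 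Once these identifications are in place the proof is complete.
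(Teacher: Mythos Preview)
Your approach matches the paper's: reduce by the symmetry group of the square to the fundamental domain $h_4\ge h_5\ge 0$, observe that on the resulting strata $\theta(t)\in[0,\pi]$ so $h_2>0$ a.e.\ and $u_2\equiv 1$, and conclude optimality via the $\dot y=u_2$ comparison (the paper invokes this as Lemma~2 of~\cite{SFCartan1}). Two small corrections you should make: the boundary level $C_3=\{E=-h_5\}$ in case~1) (and $C_3=\{E=0\}$ in case~2)) is bang-bang, not singular, and is included by the non-strict inequality $E\le -h_5$ --- your argument still covers it since $u_2\equiv 1$ there too; and on $C_2$ in case~1) the angle ranges over $(0,\pi)$, not $(0,\pi/2)$, which is exactly what allows $h_1$ to alternate sign while $h_2$ stays positive.
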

\begin{proof}
All three functions $\min (-|h_4|, -|h_5|)$, $E$, $\max (-|h_4|, -|h_5|)$ are invariant w.r.t. the group of symmetries 
of the square $\{|h_1|+|h_2| = 1\}$ (see Sec.~7.1~\cite{SFCartan1}),
thus it suffices to prove this theorem for the fundamental domain of this group $\{ h_4 \ge h_5 \ge 0 \}$. On this domain inequality \eq{minEmax} turns into the inequality $-h_4 < E \le -h_5 \le 0$, which
is equivalent to the union of the following inequalities:
\begin{enumerate}[label=(\alph*)]
\item $-h_4 < E \le -h_5 \le 0$,
\item $-h_4 < E \le -h_5 = 0$.
\end{enumerate}
Case (a) is exactly Case 1), domain $C_2$ (Subsec. \ref{subsec:1)I1}) and Case 1), level line $C_3$ (Subsec. \ref{subsec:1)C2}). And case (b) is exactly Case 2), domain $C_2$  (Subsec. \ref{subsec:2)I1}) and Case 2), level line $C_3$ (Subsec. \ref{subsec:2)C2}). In all these cases $\theta (t) \in [0, \pi]$, and $\theta (t)$ takes extreme values $0, \pi$ at isolated instants of time $t$. Thus $h_2 (\lambda_t) > 0$ for almost all $t$, whence $u_2 (t) \equiv 1$ for almost all $t$. By Lemma~2~\cite{SFCartan1}, the control $u(t)$ is optimal.
\end{proof}

The domain in the phase cylinder of system~\eq{Hamtheta} corresponding to inequalities~\eq{minEmax} is shown in Fig.~\ref{fig:minEmax}.
\begin{figure}[htbp]
\begin{center}
\includegraphics[width=2.in]{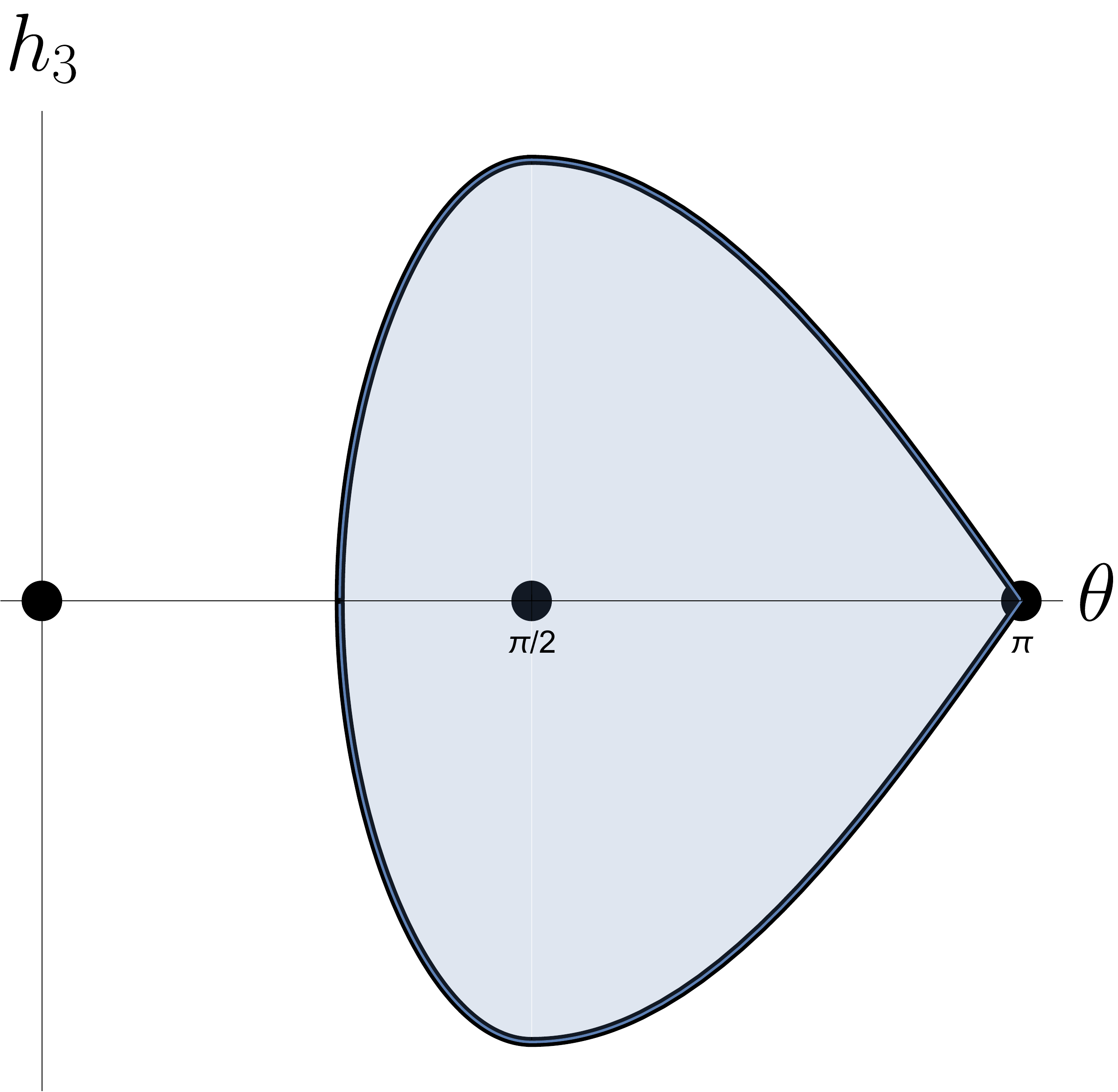}
\caption{Domain~\eq{minEmax}}
\label{fig:minEmax}
\end{center}
\end{figure}

\begin{remark}
It is easy to see that under condition \eq{minEmax} a bang-bang trajectory
is simultaneously a singular trajectory, i.e., $q(t) = \pi(\tilde{\lambda}_t)$, while $\tilde{\lambda}_t$ is an $h_1$-singular extremal linearly independent of $\lambda_t$. One can show  that there exists also a bang-bang extremal $\bar{\lambda}_t$, linearly independent of $\lambda_t$ and $\tilde{\lambda}_t$, such that $q(t) = \pi (\bar{\lambda}_t)$. Thus the trajectory $q(t)$ is a projection of at least three linearly independent extremals (in this case the extremal trajectory is said to have corank not less than $3$~\cite{notes}). Thus the below necessary optimality condition (Th. $\ref{th:agr_gam}$) is not applicable to a trajectory $q(t)$ under condition \eq{minEmax}.
\end{remark}

\subsection{Bound of the number of switchings on bang-bang trajectories \\with high energy $E$}
\subsubsection{Some necessary results}
We obtain an upper bound on the number of switchings on optimal bang-bang trajectories via the following theorem due to A. Agrachev and R. Gamkrelidze.
\begin{theorem}[\cite{agr_gam, bbds}]
\label{th:agr_gam}
Let $(q(\cdot), u(\cdot))$ be an extremal pair for problem \eq{sys}--\eq{T} and let $\lambda_{\cdot}$ be an extremal lift of $q(\cdot)$. Assume that $\lambda_{\cdot}$ is the unique extremal lift of $q(\cdot)$, up to multiplication by a positive scalar. Assume that there exist $0 = t_0 < t_1 < t_2 < \dots < t_k < \tau_{k+1} = T$ and $u^0, \dots, u^k \in U$ such that $u(\cdot)$ is constantly equal to $u^j$ on $(\tau_j, \tau_{j+1})$ for $j = 0, \dots, k$.

Fix $j = 1, \dots, k$. For $i = 0, \dots, k$ let $Y_i = u_1^i X_1 + u_2^i X_2$ and define recursively the operators 
\begin{align*}
&P_j = P_{j-1} = \Id_{\Vec(M)}, \\
&P_i = P_{i-1} \circ e^{(t_i - t_{i-1}) \ad Y_{i-1}}, \qquad i = j +1, \dots, k,\\
&P_i = P_{i+1} \circ e^{-(t_{i+2} - t_{i+1}) \ad Y_{i+1}}, \qquad i = 0, \dots, j-2.\\
\end{align*}
Define the vector fields 
$$ Z_i = P_i (Y_i), \quad i = 0, \dots, k.$$
Let $Q$ be the quadratic form
$$ Q(\alpha) = \sum_{0 \le i < l \le k} \alpha_i \alpha_l \langle \lambda_{t_j}, [Z_i, Z_l] (q(t_j)) \rangle,$$
defined on the space 
$$ W = \left\{ \alpha = (\alpha_0, \dots, \alpha_k) \in \mathbb{R}^{k+1} \mid \sum_{i=0}^k \alpha_i = 0, \quad \sum_{i=0}^k \alpha_i Z_i (q(t_j)) = 0 \right\}. $$
If $Q$ is not negative-semidefinite, then $q(\cdot)$ is not optimal.
\end{theorem}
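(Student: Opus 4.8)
This is a second-order necessary condition for the time-optimal problem \eq{sys}--\eq{T}, and the plan is to reproduce the argument of \cite{agr_gam} (see also \cite{bbds}), which I now outline. Fix the reference extremal pair $(q(\cdot),u(\cdot))$, its unique lift $\lambda_\cdot$, the switching times $0=t_0<\dots<t_k<t_{k+1}=T$, the control values $u^0,\dots,u^k$ and the pivot index $j$. For $\alpha=(\alpha_0,\dots,\alpha_k)\in\R^{k+1}$ and small $\varepsilon\ge0$ let $u_\varepsilon^\alpha$ be the bang-bang control that repeats the values $u^i$ of $u(\cdot)$ but on arcs of lengths $(t_{i+1}-t_i)+\varepsilon\alpha_i$. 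It is admissible, its total time is $T+\varepsilon\sum_i\alpha_i$, and its trajectory from $q_0$ ends at a point $\Gamma_\varepsilon^\alpha$ with $\Gamma_0^\alpha=q_1$; in particular, when $\sum_i\alpha_i=0$ the point $\Gamma_\varepsilon^\alpha$ lies in the set $\mathcal{A}$ of points attainable by \eq{sys} in time at most $T$.

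Next I would compute the $\varepsilon$-jet of $\Gamma_\varepsilon^\alpha$ based at the reference point $q(t_j)$ by means of the chronological (variation-of-constants) calculus. Writing the flow of $u_\varepsilon^\alpha$ as the ordered product of the exponentials $e^{((t_{i+1}-t_i)+\varepsilon\alpha_i)Y_i}$ with $Y_i=u_1^iX_1+u_2^iX_2$, conjugating each factor into the arc containing $t_j$ by the operators $P_i$ of the statement, and applying the Baker--Campbell--Hausdorff expansion, one obtains, with $Z_i=P_i(Y_i)$,
\[
\Gamma_\varepsilon^\alpha=\Psi\!\left(\exp\!\Big(\varepsilon\sum_i\alpha_i Z_i+\varepsilon^2\!\!\sum_{i<l}\alpha_i\alpha_l[Z_i,Z_l]+o(\varepsilon^2)\Big)(q(t_j))\right),
\]
where $\Psi$ is the (fixed) flow of the reference control from time $t_j$ to time $T$. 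Thus the first-order displacement of the endpoint is the image under $d\Psi$ of $\sum_i\alpha_i Z_i(q(t_j))$ and, \emph{precisely when this vanishes}, the second-order displacement is the image of a positive multiple of $\sum_{i<l}\alpha_i\alpha_l[Z_i,Z_l](q(t_j))$ --- which is where the form $Q$ enters. The only genuine bookkeeping here is to check that the BCH reordering contributes exactly the bracket terms, and nothing else, to the second-order jet once $\sum_i\alpha_i Z_i(q(t_j))=0$.

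Then I would invoke the Pontryagin Maximum Principle in attainable-set form. Time-optimality of $q(\cdot)$ puts $q_1$ on the boundary of $\mathcal{A}$, and $\lambda_T$ --- unique up to positive scaling, by hypothesis --- is, up to sign, a supporting covector of $\mathcal{A}$ at $q_1$, so that $\langle\lambda_T,v\rangle\le0$ for every admissible second-order displacement $v$ of $q_1$ inside $\mathcal{A}$. For $\alpha\in W$ the constraint $\sum_i\alpha_i=0$ keeps the total time equal to $T$, the constraint $\sum_i\alpha_i Z_i(q(t_j))=0$ kills the first-order displacement, and pairing the second-order displacement above with $\lambda_T$ --- equivalently with its pull-back $\lambda_{t_j}$ by $\Psi$ --- gives, up to a positive factor, $Q(\alpha)$. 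Hence $Q(\alpha)\le0$ for all $\alpha\in W$; contrapositively, if $Q$ is not negative-semidefinite, then $q(\cdot)$ is not optimal.

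The step I expect to be the main obstacle is making this last implication rigorous. Because the first-order endpoint map degenerates exactly along $W$, the mere existence of $\bar\alpha\in W$ with $Q(\bar\alpha)>0$ does not at once produce a strictly better competitor: one must combine the second-order move along $\bar\alpha$ with genuine first-order moves in a complement of $W$ (and, if needed, a small decrease of the total time via directions with $\sum_i\alpha_i<0$) and then run a Brouwer-degree / open-mapping argument, in the spirit of the high-order optimality theory of \cite{notes}, to force points of $\mathcal{A}$ strictly across the supporting hyperplane of $\lambda_T$ at $q_1$, contradicting PMP. The remaining ingredients --- the chronological expansion and its identification with $Q$ --- are routine.
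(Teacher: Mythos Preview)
The paper does not prove this theorem: it is quoted verbatim from \cite{agr_gam, bbds} as a black-box necessary optimality condition and then applied in Theorem~\ref{th:1)I2} and its analogues. There is therefore no ``paper's own proof'' to compare against; your outline is precisely the Agrachev--Gamkrelidze argument that the citation points to, so in that sense your approach coincides with the intended one.

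Your sketch is faithful to the original: the needle-like variation of arc lengths, the chronological/BCH expansion centered at the pivot switching time $t_j$ producing the fields $Z_i=P_i(Y_i)$, the identification of the second-order endpoint variation with $Q(\alpha)$ on the kernel $W$ of the first variation, and the final topological open-mapping step. You are also right that the last step is where the real work lies; in \cite{agr_gam} and in the treatment of \cite{notes} this is handled by a generalized index/open-mapping argument rather than a naive ``$Q(\bar\alpha)>0$ gives a better competitor'' reasoning. One small clarification: the uniqueness-of-lift hypothesis is used not merely to fix the sign of the supporting covector but to guarantee that the first-order attainable directions already span the full annihilator of $\lambda_{t_j}$ (corank one), which is what makes the second-order argument on $W$ sufficient; you should make that role explicit when you write the details.
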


We will check the sign of the quadratic form $Q|_W$ via the following test.

Consider a quadratic form 
$$ A(x) = \sum_{i, j = 1}^n a_{ij} x_i x_j, \qquad a_{ij} = a_{ji}, \quad x_i \in \mathbb{R}.$$
Denote a minor
\begin{gather*}
A
\begin{pmatrix}
i_1& i_2& \dots& i_p\\
i_1& i_2& \dots& i_p
\end{pmatrix}
=
\begin{vmatrix}
a_{i_1 i_1}& a_{i_1 i_2}& \dots& a_{i_1 i_p}\\
a_{i_2 i_1}& a_{i_2 i_2}& \dots& a_{i_2 i_p}\\
\hdotsfor{4} \\
a_{i_p i_1}& a_{i_p i_2}& \dots& a_{i_p i_p}\\
\end{vmatrix}.
\end{gather*}
\begin{theorem}[\cite{gant}]
\label{th:gant}
A quadratic form $A(x)$ is negative-semidefinite iff the following inequalities hold:
\begin{gather*}
(-1)^p A
\begin{pmatrix}
i_1& i_2& \dots& i_p\\
i_1& i_2& \dots& i_p
\end{pmatrix}
\ge 0, \qquad 1 \le i_1 < i_2 < \dots < i_p \le n, \quad p = 1, 2, \dots, n.
\end{gather*}
\end{theorem}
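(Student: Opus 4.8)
The plan is to reduce the assertion to the classical description of positive-semidefinite symmetric matrices and then close the substantive implication with a perturbation argument based on Sylvester's criterion. First I would pass from $A$ to the matrix $B=(b_{ij})$ with $b_{ij}=-a_{ij}$: then $A$ is negative-semidefinite precisely when the form $B(x)=\sum_{i,j} b_{ij}x_ix_j$ is positive-semidefinite, and for every multi-index $i_1<\dots<i_p$ the $p\times p$ principal minor of $B$ equals $(-1)^p$ times the corresponding principal minor of $A$. Hence the inequalities $(-1)^p A\begin{pmatrix}i_1&\dots&i_p\\ i_1&\dots&i_p\end{pmatrix}\ge 0$ say exactly that all principal minors of $B$ are nonnegative, and the theorem becomes the statement: a real symmetric matrix $B$ is positive-semidefinite if and only if all its principal minors are $\ge 0$.

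The "only if" direction is immediate: if $B$ is positive-semidefinite, then for any index set $I=\{i_1,\dots,i_p\}$ the principal submatrix $B_I$ represents the restriction of the quadratic form to the coordinate subspace $\{x:\ x_j=0 \text{ for } j\notin I\}$, so $B_I$ is positive-semidefinite, hence has nonnegative eigenvalues, hence $\det B_I\ge 0$; this yields all the required inequalities.

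For the "if" direction — the real content — I would fix $\varepsilon>0$ and set $B_\varepsilon=B+\varepsilon I$. For each $k$, the $k$-th leading principal minor of $B_\varepsilon$ is $\det(B_{[k]}+\varepsilon I_k)$, where $B_{[k]}$ denotes the top-left $k\times k$ block of $B$. Expanding in powers of $\varepsilon$ gives $\varepsilon^{k}+\sum_{j=1}^{k}\varepsilon^{\,k-j}\sigma_j(B_{[k]})$, where $\sigma_j(B_{[k]})$ is the sum of all $j\times j$ principal minors of $B_{[k]}$; each such minor is also a principal minor of $B$, hence $\ge 0$ by hypothesis, so $\det(B_{[k]}+\varepsilon I_k)\ge\varepsilon^{k}>0$. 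Thus every leading principal minor of $B_\varepsilon$ is strictly positive, so $B_\varepsilon$ is positive-definite by Sylvester's criterion. Letting $\varepsilon\to 0^{+}$ and using that the form $B(x)$ is the pointwise limit of the positive forms $B_\varepsilon(x)$, we conclude that $B$ is positive-semidefinite.

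The main obstacle — and the only place where anything beyond bookkeeping happens — is this "if" direction: one has to obtain positivity of all leading minors of the regularized matrix, not merely of its determinant, and that is precisely what the elementary-symmetric expansion of $\det(B_{[k]}+\varepsilon I_k)$ delivers (I would here invoke the standard identity that the coefficients of $\det(M+tI)$ are the $\sigma_j(M)$, i.e. the sums of principal minors of $M$). The remaining ingredients — Sylvester's criterion for positive-definiteness, the fact that a principal submatrix of a semidefinite form is semidefinite, and tracking the sign $(-1)^p$ when replacing $A$ by $-A$ — are routine, so I would cite Sylvester's criterion and do the sign accounting explicitly; an induction on $n$ would be an alternative, but the perturbation route is shorter.
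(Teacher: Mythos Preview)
Your proof is correct and is in fact the standard perturbation argument for this characterization of semidefinite forms. Note, however, that the paper does not prove this statement at all: Theorem~\ref{th:gant} is quoted from Gantmacher's \emph{The Theory of Matrices}~\cite{gant} as a known result and is used only as a tool (in the proof of Theorem~\ref{th:1)I2} and its analogues) to detect when the quadratic form $Q|_W$ fails to be negative-semidefinite. So there is no ``paper's own proof'' to compare against; you have supplied a self-contained argument where the authors simply cite the literature. Your reduction to the positive-semidefinite case via $B=-A$, the expansion $\det(B_{[k]}+\varepsilon I_k)=\sum_{j=0}^{k}\varepsilon^{k-j}\sigma_j(B_{[k]})$ with $\sigma_j$ the sum of $j\times j$ principal minors, and the appeal to Sylvester's criterion for the strictly perturbed matrix are all sound; the limiting step $\varepsilon\to 0^+$ is unproblematic since positive-semidefiniteness is a closed condition.
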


\subsubsection{Case $1)$, domain $C_4$}
\begin{theorem}
\label{th:1)I2}
Let $h_4 > h_5 > 0$, $E \in (-h_5, h_5)$. Then any bang-bang control with $8$ switchings is not optimal.
\end{theorem}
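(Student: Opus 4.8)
The plan is to apply Theorem~\ref{th:agr_gam} to a bang-bang control in Case 1), domain $C_4$ having $8$ switchings, and to exhibit an explicit vector $\alpha\in W$ on which the quadratic form $Q$ is positive. First I would fix the structure of such a control: by the classification in Subsubsec.~\ref{subsec:1)I2}, a bang-bang trajectory with $8$ switchings consists of $9$ consecutive arcs whose control values run through the periodic pattern $(+,+),(-,+),(-,-),(-,+),(+,+),(+,-)$ and so on, with the interior arc durations equal to $\tau_1,\tau_2,\tau_3,\tau_2,\tau_1,\dots$ as given there (the first and last durations being free in $(0,\tau_i]$). I would also record that the extremal lift is unique up to positive scaling: this needs the observation that in domain $C_4$ we have $\theta(t)\in(0,\pi)$ sweeping through both $\theta=\pi/2$-type and $\theta=0,\pi$-type switching points, and the corank-$1$ property is inherited from $\lambda_t$ not being simultaneously singular (contrast with the Remark after Theorem~\ref{th:opt_low_en}, which is exactly the regime that is excluded here since $E\in(-h_5,h_5)$ means $h_2$ changes sign, so $u_2$ is genuinely bang-bang).

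Next I would carry out the computation of the vector fields $Z_i=P_i(Y_i)$ at the chosen midpoint $q(t_j)$. Here $Y_i=u_1^iX_1+u_2^iX_2$ takes one of the four values $\pm X_1\pm X_2$, and the operators $P_i$ are compositions of exponentials $e^{\pm(\Delta t)\ad Y}$ with the arc durations $\tau_1,\tau_2,\tau_3$. Since $\ad$ of any element of $\spann(X_1,X_2)$ is nilpotent of order $3$ on the Cartan algebra (step $3$), each $e^{(\Delta t)\ad Y}$ is a polynomial of degree $\le 2$ in $\Delta t$, so the $Z_i$ are explicit $L$-valued polynomials in $\tau_1,\tau_2,\tau_3$ with coefficients in the structure constants. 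I would then form the matrix of pairings $\langle\lambda_{t_j},[Z_i,Z_l](q(t_j))\rangle$, using the commutation relations $[X_1,X_2]=X_3$, $[X_1,X_3]=X_4$, $[X_2,X_3]=X_5$ and the values of $h_3(\lambda_{t_j}),h_4,h_5$ at the chosen time (picking $t_j$ to be, say, a $\theta=\pi/2$ switching instant so that $h_3=\pm\sqrt{2(E+h_4)}$ and $h_1=0$, $h_2=1$, to simplify the pairings). Finally I would determine the space $W$ (two linear constraints, so $\dim W = k+1-2 = 8$ with $k=8$) and either exhibit a specific $\alpha\in W$ with $Q(\alpha)>0$, or invoke Theorem~\ref{th:gant}: compute a suitable principal minor of the matrix of $Q|_W$ (after choosing a basis of $W$) and show the sign condition $(-1)^pA(\cdots)\ge 0$ fails. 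By the symmetry reductions ($h_4\ge h_5\ge 0$) it suffices to do this on the fundamental domain, and one expects the violating minor to be a polynomial in $\tau_1,\tau_2,\tau_3$ (equivalently in $E,h_4,h_5$) that is manifestly of one sign on the whole parameter range $E\in(-h_5,h_5)$.

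The main obstacle will be the algebraic bookkeeping: forming $Q|_W$ requires choosing a convenient basis of the $8$-dimensional constraint space $W$ and then showing that the resulting (small) determinant has a definite sign uniformly over the two-parameter family of admissible $(\tau_1,\tau_2,\tau_3)$. The step that genuinely requires care — rather than just patience — is the verification that the extremal lift is unique up to scaling, since Theorem~\ref{th:agr_gam} fails without it; I would handle this by the same corank argument used implicitly throughout Sec.~4, namely that a trajectory in domain $C_4$ projects from a unique extremal because $u_2$ is non-constant, so the trajectory is not singular and the only other candidate lifts would have to share the same switching data, which the phase portrait in Fig.~\ref{fig:1)I2} rules out. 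Once uniqueness is in hand and the sign of the chosen minor is pinned down, non-optimality follows immediately from Theorem~\ref{th:agr_gam}.
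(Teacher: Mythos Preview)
Your approach is the paper's: apply Theorem~\ref{th:agr_gam} at the first switching ($j=1$, a $\theta=\pi/2$ instant), compute the $Z_i$ explicitly using nilpotency of $\ad$, restrict $Q$ to $W$, and falsify negative-semidefiniteness via a principal minor (Theorem~\ref{th:gant}). Two points need correction before the computation will actually run.

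First, the switching pattern in domain $C_4$ has period four: $(+,+),(-,+),(-,-),(-,+)$ with durations $\tau_1,\tau_2,\tau_3,\tau_2$ (Subsubsec.~\ref{subsec:1)I2}); there is no $(+,-)$ arc in this domain. With eight switchings the nine arcs are $(+,+),(-,+),(-,-),(-,+),(+,+),(-,+),(-,-),(-,+),(+,+)$, so $Y_0=-Y_2=Y_4=-Y_6=Y_8=X_1+X_2$ and $Y_1=Y_3=Y_5=Y_7=-(X_1-X_2)$. Second, and more substantively, your dimension count for $W$ is wrong: the constraint $\sum_i\alpha_iZ_i(q(t_j))=0$ is a \emph{vector} equation in $T_{q(t_j)}M\cong\mathbb{R}^5$, not a single scalar relation. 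Together with $\sum_i\alpha_i=0$ this cuts $\mathbb{R}^9$ down to $\dim W=4$ (the paper parametrizes $W$ by $\alpha_0,\alpha_1,\alpha_2,\alpha_4$), not $8$. The sign test is then carried out on the resulting $4\times4$ matrix of $Q|_W$: the paper exhibits the $2\times2$ principal minor in rows/columns $0,1$ and shows it is strictly negative on the whole range $h_4>h_5>0$, $E\in(-h_5,h_5)$, violating Theorem~\ref{th:gant}. Your concern about the uniqueness-of-lift hypothesis of Theorem~\ref{th:agr_gam} is legitimate --- the paper's proof does not verify it explicitly either --- and your instinct (non-constant $u_2$ rules out singular covers, in contrast to the Remark after Theorem~\ref{th:opt_low_en}) is in the right direction, but the sentence about other lifts ``sharing the same switching data'' is not yet a proof.
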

\begin{proof}
Consider a control starting from $(1, -1)$ and having  $k = 8$ switchings (controls starting from other values are considered similarly). We apply Th.~\ref{th:agr_gam} and show that such control is not optimal. We have $0 = t_0 < t_1 < \dots < t_9 = T$, where
\begin{align*}
&t_1 \in (0, \tau_1], \quad t_2 - t_1 = t_4 - t_3 = t_6 - t_5 = t_8 - t_7 = \tau_2,\\
&t_3 - t_2 = t_7 - t_6 = \tau_3, \quad t_5 - t_4 = \tau_1, \quad t_9 - t_8 \in (0, \tau_1],
\end{align*}
and the values $\tau_1, \tau_2, \tau_3$ are defined in Subsec.~\ref{subsec:1)I2}. Further, we have 
\begin{align*}
&u|_{(t_0, t_1)} = u|_{(t_4, t_5)} = u|_{(t_8, t_9)} = (1, 1),\\
&u|_{(t_1, t_2)} = u|_{(t_3, t_4)} = u|_{(t_5, t_6)} = u|_{(t_7, t_8)} = (-1, 1), \\
&u|_{(t_2, t_3)} = u|_{(t_6, t_7)} = (-1, -1),
\end{align*}
see Fig.~\ref{fig:1)I2}.
We apply Th.~\ref{th:agr_gam} in the case $k = 8$, $j = 1$. We use the basis $(X_+, X_-, X_3, X_{++}, X_{--})$ in the Lie algebra~$L$, where $X_+ = X_1 + X_2$, $X_- = X_1 - X_2$, $X_{++} = X_4 + X_5$, $X_{--} = X_4 - X_5$.
Then 
$$ Y_0 = - Y_2 = Y_4 = -Y_6 = Y_8 = X_+, \quad Y_1 = Y_3 = Y_5 = Y_7 = -X_-. $$
Further,
\begin{align*}
&P_1 = P_0 = \Id, \qquad &&P_5 = P_4 \circ e^{\tau_1 \ad X_+},\\
&P_2 = e^{-\tau_2 \ad X_-}, &&P_6 = P_5 \circ e^{-\tau_2 \ad X_-},\\
&P_3 = P_2 \circ e^{-\tau_3 \ad X_+}, &&P_7 = P_6 \circ e^{-\tau_3 \ad X_+},\\
&P_4 = P_3 \circ e^{-\tau_2 \ad X_-}, &&P_8 = P_7 \circ e^{-\tau_2 \ad X_-}.
\end{align*}
Thus
\begin{align*}
&Z_0 = X_+,\\
&Z_1 = -X_-,\\
&Z_2 = -X_+ + 2\tau_2 X_3 - \tau_2^2 X_{--},\\
&Z_3 = -X_- - 2\tau_3 X_3 + \tau_3^2 X_{++} + 2\tau_2 \tau_3 X_{--},\\
&Z_4 = X_+ - 4 \tau_2 X_3 + 2 \tau_2 \tau_3 X_{++} + 4 \tau_2^2 X_{--},\\
&Z_5 = -X_- + (2\tau_1 - 2\tau_3) X_3 + (\tau_1^2 - 2\tau_1 \tau_3 + \tau_3^2) X_{++} + (2 \tau_2 \tau_3 - 4 \tau_1 \tau_2) X_{--},\\
&Z_6 = -X_+ + 6\tau_2 X_3 + (2 \tau_1 \tau_2 - 4\tau_2 \tau_3) X_{++} - 9\tau_2^2 X_{--},\\
&Z_7 = -X_- + (2\tau_1 - 4\tau_3) X_3 + (4\tau_3^2 - 4\tau_1 \tau_3 + \tau_1^2) X_{++} + (8\tau_2 \tau_3 - 4 \tau_1 \tau_2) X_{--},\\
&Z_8 = X_+ - 8 \tau_2 X_3 + (8 \tau_2 \tau_3 - 4 \tau_2 \tau_1) X_{++} + 16 \tau_2^2 X_{--}.
\end{align*}
Introduce the notation:
$$ c = h_3, \quad a = h_4 + h_5, \quad b = h_4 - h_5.$$
Then $Q(\alpha) = \sum_{i, l = 0}^8 \sigma_{il} \alpha_i \alpha_l$, where
\begin{align*}
&\sigma_{01} = 2c,\qquad  &&\sigma_{16} = 2c - 6\tau_2 b,\qquad &&\sigma_{37} = (2\tau_3 - 2\tau_1) b,\\
&\sigma_{02} = 2\tau_2 a, &&\sigma_{17} = (4\tau_3 - 2\tau_1) b, &&\sigma_{38} = -2c + 2\tau_3 a + 8\tau_2 b,\\
&\sigma_{03} = 2c - 2\tau_3 a, &&\sigma_{18} = -2c + 8\tau_2 b, &&\sigma_{45} = 2c + (2\tau_1 - 2\tau_3)a - 4\tau_2 b,\\
&\sigma_{04} = -4\tau_2 a, &&\sigma_{23} = -2c + 2\tau_3 a + 2\tau_2 b, &&\sigma_{46} = 2\tau_2 a,\\
&\sigma_{05} = 2c + (2\tau_1 - 2\tau_3) a, &&\sigma_{24} = 2\tau_2 a, &&\sigma_{47} = 2c - 4\tau_2 b + (2\tau_1 - 4\tau_3) a,\\
&\sigma_{06} = 6\tau_2 a, &&\sigma_{25} = -2c - (2\tau_1 -2\tau_3) a + 2\tau_2 b, &&\sigma_{48} = -4\tau_2 a,\\
&\sigma_{07} = 2c + (2\tau_1 - 4\tau_3) a, &&\sigma_{26} = -4\tau_2 a, &&\sigma_{56} = 2c+ (2\tau_1 - 2\tau_3)a - 6\tau_2 b,\\
&\sigma_{08} = -8\tau_2 a, &&\sigma_{27} = -2c - (2\tau_1 - 4\tau_3) a +2\tau_2 b, &&\sigma_{57} = 2\tau_3b,\\
&\sigma_{12} = 2c - 2\tau_2 b, &&\sigma_{28} = 6\tau_2 a, &&\sigma_{58} = -2c + (2\tau_3 - 2\tau_1)a + 8\tau_2 b,\\
&\sigma_{13} = 2\tau_3 b, &&\sigma_{34} = -2c + 2\tau_3 a + 4\tau_2 b, &&\sigma_{67} = -2c + (4\tau_3 - 2\tau_1)a + 6\tau_2 b,\\
&\sigma_{14} = -2c + 4\tau_2 b, &&\sigma_{35} = -2\tau_1b, &&\sigma_{68} = 2\tau_2a,\\
&\sigma_{15} = (2\tau_3 - 2\tau_1) b, &&\sigma_{36} = 2c - 2\tau_3a - 6\tau_2 b, &&\sigma_{78} = -2c + (4\tau_3 - 2\tau_1)a + 8\tau_2 b.
\end{align*}
Further,
\begin{align*}
W &= \left\{ (\alpha_0, \dots, \alpha_8) \in \mathbb{R}^9  \mid  \sum_{i=0}^8 \alpha_i = 0, \quad \sum_{i=0}^8 \alpha_i Z_i (q(t_1)) = 0 \right\} \\
&= \{ (\alpha_0, \dots, \alpha_8) \in \mathbb{R}^9 \quad  \mid \quad  \alpha_3 = (4\tau_2 \alpha_0 + (2 \tau_3 - \tau_1) \alpha_1 - 2\tau_2 \alpha_2) / \tau_1, \\
&\qquad \alpha_5 = (-4 \tau_2 \alpha_0 + (\tau_1 - 2\tau_3) \alpha_1 + 2\tau_2 \alpha_2) / \tau_1 - 2\tau_2 \alpha_4 / \tau_3,\\
&\qquad \alpha_6 = -\alpha_2, \quad \alpha_7 = -\alpha_1 + 2 \tau_2 \alpha_4 / \tau_3, \quad \alpha_8 = -\alpha_0 - \alpha_4 \},\\
Q|_W &= \frac{4}{\tau_1 \tau_3} \sum_{i, j = 0,1,2,4} a_{ij} \alpha_i \alpha_j,
\end{align*}
\begin{align*}
&a_{00} = 2\tau_2 \tau_3 (-a \tau_1 + 4b\tau_2),\qquad &&a_{02} = 2\tau_2 \tau_3 (a \tau_1 - 2b\tau_2),\\
&a_{11} = b \tau_3^2 (-\tau_1 + 2\tau_3), &&a_{04} = 2\tau_2 \tau_3 (-c + 2b\tau_2 + a(\tau_3 - \tau_1)),\\
&a_{22} = \tau_2 \tau_3 (-a \tau_1 + 2b \tau_2), &&a_{12} = -\tau_3 (\tau_1 - 2\tau_3) (a\tau_1 - 2b\tau_2)/2,\\
&a_{44} = -\tau_1 \tau_2 (2b\tau_2 + a\tau_3), &&a_{14} = \tau_3 (2b\tau_2 (\tau_1 +2\tau_3) + \tau_3 (-2c - a\tau_1 + 2a\tau_3)) / 2,\\
&a_{01} = 4b\tau_2 \tau_3^2, &&a_{24} = -\tau_2 (-2a \tau_1^2 +6b\tau_1 \tau_2 + a\tau_1 \tau_3 + 4b\tau_2 \tau_3 + 2a \tau_3^2 - 2c(\tau_1 + \tau_3))/2.
\end{align*}
Then
\begin{gather*}
A
\begin{pmatrix}
0& 1\\
0& 1
\end{pmatrix}
 = 
\begin{vmatrix}
a_{00}& a_{01}\\
a_{01}& a_{11}\\
\end{vmatrix}
= \frac{512(1+Y)(X+Y)\left(X+Y-\sqrt{(1+Y)(X+Y)}\right)}{h_4 (1 + X)^4}, \\
X = \frac{h_5}{h_4} \in (0, 1), \quad Y = \frac{E}{h_4} \in (-X, X).
\end{gather*}
Thus $A
\begin{pmatrix}
0& 1\\
0& 1
\end{pmatrix} < 0$, i.e., the quadratic form $Q|_W$ is not negative semidefinite. By Th.~\ref{th:agr_gam}, the control $u$ is not optimal.
\end{proof}
\subsubsection{Bound of the number of switchings in the general case of high energy $E$}
The rest cases are considered similarly to Th.~\ref{th:1)I2}:
\begin{itemize}
\item Case 1), $\cup_{i=5}^8 C_i,$
\item Case 2), $\cup_{i=4}^6 C_i$,
\item Case 3),
\item Case 4).
\end{itemize}
In all these cases Th.~\ref{th:agr_gam} and Th.~\ref{th:gant} imply that $k = 12$ switchings are not optimal.

Passing from the fundamental domain $\{ h_4 \ge h_5 \ge 0 \}$ of the group $G$ to the whole plane $(h_4, h_5)$, we get the following general bound of the number of switchings.
\begin{theorem}\label{th:gen_bound}
If $E > \max(-|h_4|, -|h_5|)$, then optimal bang-bang trajectories have no more than 11 switchings.

In particular, in this case $t_{\cut} (\lambda) < +\infty$.
\end{theorem}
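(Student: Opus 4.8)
The plan is to reduce the general bound to the case-by-case computations already set up in the preceding subsections, exactly in the spirit of Theorem~\ref{th:1)I2}. First I would invoke the symmetry group $G$ of the square $\{|h_1|+|h_2|=1\}$: since the conditions $E>\max(-|h_4|,-|h_5|)$ and the switching count are $G$-invariant, it suffices to work in the fundamental domain $\{h_4\ge h_5\ge 0\}$. There the hypothesis $E>\max(-|h_4|,-|h_5|)=-h_5$ singles out precisely the domains and level lines listed just before the theorem: Case~1) on $C_4$ through $C_8$, Case~2) on $C_4$ through $C_6$, and all of Cases~3) and 4) with $E$ above the relevant threshold. (The excluded low-energy regime $-h_4<E\le-h_5$ is Theorem~\ref{th:opt_low_en}, and the degenerate levels $E=-h_4$ give singular, not bang-bang, extremals.)

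Next, for each of these regimes I would carry out the Agrachev--Gamkrelidze test (Theorem~\ref{th:agr_gam}) with $k=12$ switchings, following the template of Theorem~\ref{th:1)I2} verbatim: pick a starting control value (others being symmetric), read off the switching times $t_0<\dots<t_{13}$ from the periodic control pattern with the durations $\tau_i=\tau_i(h_4,h_5,E)$ from Corollary~\ref{cor:tau}, compute the transported vector fields $Z_0,\dots,Z_{12}$ in the adapted basis $(X_+,X_-,X_3,X_{++},X_{--})$, form the quadratic form $Q$ and restrict it to the subspace $W$ defined by the two linear constraints, and then exhibit a single principal minor whose sign violates the negative-semidefiniteness criterion of Theorem~\ref{th:gant}. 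The uniqueness of the extremal lift needed to apply Theorem~\ref{th:agr_gam} holds here because, unlike the low-energy regime discussed in the Remark, these bang-bang trajectories are not simultaneously singular, so the corank is $1$. Concluding: a bang-bang minimizer in this regime has at most $11$ switchings; since the durations of all interior bang arcs are bounded below by a positive function of the (fixed) Casimirs and there are boundedly many of them, the total time is bounded, hence $t_{\cut}(\lambda)<+\infty$.

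The main obstacle is purely computational: producing, for each of the seven or eight sub-cases, an explicit principal minor of $Q|_W$ together with a closed-form expression in the reduced parameters $X=h_5/h_4$ and $Y=E/h_4$ (or their analogues when $h_5=0$ or $h_4=h_5$) that is manifestly of the wrong sign on the whole parameter range. As in Theorem~\ref{th:1)I2}, where the $2\times2$ minor $A\binom{0\ 1}{0\ 1}$ factors as $512(1+Y)(X+Y)\bigl(X+Y-\sqrt{(1+Y)(X+Y)}\bigr)/\bigl(h_4(1+X)^4\bigr)<0$, one expects each sub-case to admit such a miraculous factorization after substituting the $\tau_i$, but verifying it requires careful symbolic algebra (and possibly a computer algebra system). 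The graph-structured controls on the ``figure-eight'' level lines $C_7$ (Case~1)) and $C_5$ (Case~2)) need slightly more care, since the periodic pattern branches at the corner point; one must check that any length-$12$-switching walk through the graph — not merely one period of one branch — is ruled out, which may mean handling two or three representative walks per such level line. Apart from that bookkeeping, the argument is a routine, if lengthy, iteration of the method already demonstrated.
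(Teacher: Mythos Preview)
Your proposal is correct and follows essentially the same approach as the paper: reduce by the symmetry group $G$ to the fundamental domain $\{h_4\ge h_5\ge 0\}$, then in each remaining high-energy sub-case apply the Agrachev--Gamkrelidze test (Theorem~\ref{th:agr_gam}) with $k=12$ switchings and exhibit a minor violating Theorem~\ref{th:gant}. The paper treats Case~1), $C_4$, separately with $k=8$ (Theorem~\ref{th:1)I2}) and then simply asserts that the same method with $k=12$ disposes of the remaining sub-cases; your more explicit bookkeeping of the branching level lines and the corank-$1$ hypothesis is, if anything, more careful than the paper's own argument.
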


\section{General form of normal extremals}
Now we prove that the list of types of normal extremals given in Sec.~\ref{sec:problem} is complete.

\begin{theorem}
\label{th:gen_controls}
If $\lambda_t, t \in [0, T]$, is a normal extremal, then there exist $0 \le t_1 < t_2 < \dots < t_n \le T$, for which the following conditions hold:
\begin{itemize}
\item $h_1 h_2 (\lambda_{t_i}) = 0, \quad i = 1, \dots, n,$
\item for any $i = 1, \dots, n-1,$ one of the following conditions is satisfied:
\begin{align*}
&h_1 h_2 (\lambda_t)|_{(t_i, t_{i+1})} \ne 0 \textrm{ or}\\
&h_1 (\lambda_t)|_{[t_i, t_{i+1}]} \equiv 0, \quad h_2 (\lambda_t)|_{(t_i, t_{i+1})} \ne 0 \textrm{ or}\\
&h_2 (\lambda_t)|_{[t_i, t_{i+1}]} \equiv 0, \quad h_1 (\lambda_t)|_{(t_i, t_{i+1})} \ne 0.
\end{align*}
\end{itemize}
\end{theorem}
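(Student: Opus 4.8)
The plan is to control separately the zero sets of $h_1$ and of $h_2$ along the extremal and then read off the required partition from them. As in the proof of Theorem~\ref{th:opt_low_en}, by the symmetry group of the square $\{|h_1|+|h_2|=1\}$ we may assume $h_4\ge h_5\ge 0$. Recall that $h_4$, $h_5$ and $E=\frac{h_3^2}{2}+h_1h_5-h_2h_4$ are constant along $\lambda_\cdot$, and normalize $H=|h_1|+|h_2|\equiv 1$; in particular $h_1(\lambda_t)$ and $h_2(\lambda_t)$ never vanish simultaneously. Put $N_1=\{t\in[0,T]:h_1(\lambda_t)=0\}$ and $N_2=\{t\in[0,T]:h_2(\lambda_t)=0\}$; these are disjoint closed sets. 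On each connected component of $[0,T]\setminus(N_1\cup N_2)$ one has $h_1h_2\neq 0$, so $u_i=\sgn h_i$ there (a bang arc); a maximal interval contained in $N_1$ carries an $h_1$-singular arc (there $h_2\neq 0$, since $H\equiv 1$), and symmetrically for $N_2$; and no interval can lie in both $N_1$ and $N_2$.

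The crux is the claim that \emph{$N_1$ and $N_2$ each have finitely many connected components}. I would prove it by a local analysis at a time $t_0$ with $h_1(\lambda_{t_0})=0$. Since $h_2(\lambda_{t_0})\neq 0$, the sign $\varepsilon:=\sgn h_2$ is constant on a neighbourhood of $t_0$ and $h_2=\varepsilon(1-|h_1|)$ there; hence near $t_0$ the extremal is governed by the planar system obtained from \eqref{Ham_bang} by setting $u_2=\varepsilon$, namely $\dot h_1=-\varepsilon h_3$, $\dot h_3=u_1h_4+\varepsilon h_5$ with $u_1=\sgn h_1$ where $h_1\neq 0$, and it stays on the curve $\frac{h_3^2}{2}+h_1h_5-\varepsilon(1-|h_1|)h_4=E$, which near $h_1=0$ is a piece of a union of two parabolas meeting on the line $h_1=0$ at height $h_3^2=\beta:=2(E+\varepsilon h_4)$. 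If $\beta>0$ then $h_3(\lambda_{t_0})\neq 0$, so $\dot h_1=-\varepsilon h_3$ keeps a constant sign near $t_0$ and $t_0$ is an isolated zero of $h_1$. If $\beta=0$ then $h_3(\lambda_{t_0})=0$ and $E=-\varepsilon h_4$; I would run the four sub-cases $h_4>h_5>0$, $h_4>h_5=0$, $h_4=h_5>0$, $h_4=h_5=0$ of Section~\ref{sec:structure}. For $\varepsilon=+1$ (so $E=-h_4$; the level sets with $E=-h_4$) the identity $\frac{h_3^2}{2}=-|h_1|h_4-h_1h_5$ together with $h_3^2\ge 0$ and $h_4\ge h_5\ge 0$ forces $h_1\equiv 0$ and $h_3\equiv 0$ on a one-sided neighbourhood of $t_0$ (an $h_1$-singular arc), except in the degenerate case $h_4=h_5$, where instead $h_1$ is a non-positive constant and the arc is bang, $u\equiv(-1,+1)$, or again singular. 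For $\varepsilon=-1$ (so $E=h_4$; the level sets with $E=h_4$, including the corner points of $C_7$, $C_5$, $C_3$) the curve near the origin is the union of $h_3^2=2h_1(h_4-h_5)$, $h_1\ge 0$, and $h_3^2=-2h_1(h_4+h_5)$, $h_1\le 0$, along which $h_1\sim c\,(t-t_0)^2$ with $c\neq 0$ (unless $h_4=h_5=0$, in which case $h_3\equiv 0$ and $h_1\equiv 0$): so $h_1$ reaches $0$ at $t_0$ quadratically and leaves with a definite sign on each side, and $t_0$ is again isolated. In all cases every zero of $h_1$ is either isolated or has $h_1\equiv 0$ on a one-sided neighbourhood; since $N_1$ is closed this prevents its components from accumulating and proves the claim, and the same argument with $h_1$ and $h_2$ interchanged handles $N_2$.

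Granting the claim, $N_1\cup N_2$ has finitely many components, hence so does $[0,T]\setminus(N_1\cup N_2)$. Let $0\le t_1<\dots<t_n\le T$ be all endpoints of all components of $N_1$ and of $N_2$; then $h_1h_2(\lambda_{t_i})=0$ by construction. For consecutive $t_i,t_{i+1}$ the interval $(t_i,t_{i+1})$ contains no such endpoint, so either it is disjoint from $N_1\cup N_2$, giving $h_1h_2\neq 0$ on it, or it meets, say, $N_1$; in that case the component $K$ of $N_1$ it meets is a closed interval with both endpoints outside $(t_i,t_{i+1})$, whence $[t_i,t_{i+1}]\subseteq K\subseteq N_1$, so $h_1\equiv 0$ on $[t_i,t_{i+1}]$ and $h_2\neq 0$ on it (as $N_1\cap N_2=\emptyset$); the case of $N_2$ is symmetric. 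The three possibilities are mutually exclusive, the last two because $H\equiv 1$. This is exactly the assertion of the theorem.

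The only real difficulty is the local analysis at a zero of $h_1$ where also $h_3$ vanishes, i.e.\ $E=\pm h_4$: this is precisely where transitions between bang and singular arcs occur and where the corner points of the level sets $C_7$, $C_5$, $C_3$ of Section~\ref{sec:structure} sit, so it must be settled case by case using those explicit descriptions (and, to exclude bang arcs of vanishing duration wedged between singular arcs, Corollary~\ref{cor:tau}). Away from such points the crossing $h_1=0$ is transversal and the statement is immediate.
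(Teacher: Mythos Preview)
Your argument is essentially correct, but it takes a much longer route than the paper's, and the tool you invoke only in your last paragraph is in fact the entire engine of the paper's proof.

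The paper does no local phase-plane analysis at zeros of $h_1$ or $h_2$. It works directly with $N=\{t\in[0,T]:h_1h_2(\lambda_t)\neq 0\}$ and argues by contradiction: if $N$ had infinitely many components, choose $\tau_n$ in distinct components with $\tau_n\to\bar t$, and interlace points $s_n$ of $Z=[0,T]\setminus N$ between them. Each $\tau_n$ lies on a maximal bang arc $(\alpha_n,\beta_n)$, and since the Casimirs $h_4,h_5,E$ are constant along the whole extremal, Corollary~\ref{cor:tau} gives a uniform lower bound $\beta_n-\alpha_n\ge C(h_4,h_5,E)>0$ for all interior bang arcs; hence $s_n-s_{n-1}\ge C$, contradicting $s_n\to\bar t$. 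The second half of the paper's proof---that on a maximal subinterval where $h_1h_2\equiv 0$ exactly one of $h_1,h_2$ vanishes identically---is the same open-and-closed connectedness argument you give.

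Your approach instead re-derives, case by case, the local structure near the points $(\theta,h_3)=(\tfrac{3\pi}{2},0)$ and $(0,0)$ already catalogued in Section~\ref{sec:structure}, which is correct but laborious. One imprecision worth fixing: the dichotomy ``$t_0$ is isolated in $N_1$ or $h_1\equiv 0$ on a one-sided neighbourhood'' is not by itself enough to rule out accumulation of components of $N_1$ from the other side. What your local analysis actually yields (and what you should state) is the two-sided version: on \emph{each} side of $t_0$, either $h_1\neq 0$ on a punctured one-sided neighbourhood or $h_1\equiv 0$ on a one-sided neighbourhood. With that said, the finiteness of components follows cleanly.
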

\begin{proof}
Introduce the sets
$$ Z = \{ t \in [0, T] \mid h_1 h_2 (\lambda_t) = 0 \}, 
\qquad N = [0, T] \backslash Z.$$
The set $N$ is open in $[0, T]$, thus it consists of a finite or countable number of open intervals (and, may be, two half-open intervals near the endpoints of $[0, T]$). We prove that $N$ consists of a finite number of intervals. By contradiction, suppose that $N$ is a countable union of non-intersecting intervals. Choosing a point in each interval, construct a sequence $\{ \tau_n \mid n \in \mathbb{N} \} \subset N$. Passing to a subsequence, we can assume that $\exists \lim_{n \to \infty}\tau_n = \bar{t} \in [0, T]$, and $\tau_n < \bar{t}$ for all $n \in \mathbb{N}$ (or $\tau_n > \bar{t}$ for all $n \in \mathbb{N}$, which is considered similarly). Since the points $\tau_n$ belong to different connected components of $N$, there exists a sequence $\{ s_n \}, n \in \mathbb{N}$, such that $s_n \in Z, \quad \tau_n < s_n < \tau_{n+1} < s_{n+1} < \dots < \bar{t}, \quad n \in \mathbb{N}$.
Thus $\lim_{n \to \infty} s_n = \bar{t}$. Since $h_1 h_2 (\lambda_{\tau_n}) \ne 0$, there exists an interval $(\alpha_n, \beta_n) \ni \tau_n$ such that $h_1 h_2 (\lambda_t)|_{(\alpha_n, \beta_n)} \ne 0$, $n \in \mathbb{N}$. Each extremal arc $\lambda_t |_{(\alpha_n, \beta_n)}$, $n \in \mathbb{N}$, is a bang arc, and the Casimirs $h_4$, $h_5$, $E$ take the same value on each of these arcs. Thus duration of all bang arcs is separated from zero: $\beta_n - \alpha_n \ge C > 0$, $C = C (h_4, h_5, E)$, see Cor.~\ref{cor:tau}. Then $s_{n-1} - s_n \ge \beta_n - \alpha_n \ge C > 0$, which contradicts the equality $\lim_{n \to \infty} s_n = \bar{t}$. Thus $N$ consists of a finite number of intervals.

Let $h_1 h_2 (\lambda_t)|_{[t_i, t_{i+1}]} \equiv 0$. Notice that if $h_j (\lambda_t) = 0$, then $h_{3-j} (\lambda_t) \ne 0, j = 1,2$, since $H(\lambda_t) = (|h_1|+|h_2|)(\lambda_t) > 0$ for a normal extremal $\lambda_t$.
Take any $\bar{t} \in [t_i, t_{i+1}]$, then we can assume that $h_1 (\lambda_{\bar{t}}) = 0, h_2 (\lambda_{\bar{t}}) \ne 0$ (the case $h_2  (\lambda_{\bar{t}}) = 0, h_1  (\lambda_{\bar{t}}) \ne 0$ is considered similarly). Then there exists a neighborhood $O(\bar{t}) \subset [t_1, t_2]$ such that $h_2 (\lambda_t)|_{O(\bar{t})} \ne 0$, thus $h_1 (\lambda_t)|_{O(\bar{t})} \equiv 0$. Thus the set $\{ t \in [t_i, t_{i+1}] \mid h_1(\lambda_t) = 0 \}$ is open and closed in $[t_i, t_{i+1}]$, so it coincides with $[t_i, t_{i+1}]$. In other words,
$$ h_1 (\lambda_t)|_{[t_i, t_{i+1}]} \equiv 0, \qquad h_2 (\lambda_t)|_{(t_i, t_{i+1})} \ne 0.$$
Summing up, any normal extremal is either bang-bang, or singular, or mixed.
\end{proof}

\section{Mixed extremals}
Consider an extremal $\lambda_t, t \in [0, T]$, and let $0 \le \alpha < \beta < \gamma \le T$. Let the
arc $\lambda_t |_{[\alpha, \beta]}$ be bang-bang, and let the arc $\lambda_t |_{[\beta, \gamma]}$ be singular. Then we say that the bang-bang arc $\lambda_t |_{[\alpha, \beta]}$  adjoins the singular arc $\lambda_t |_{[\beta, \gamma]}$ at the point $\lambda_\beta$. Similarly in the case when a singular arc precedes a bang-bang arc.

Notice that singular arcs of types (a), (b) were described in Theorems~3, 4~\cite{SFCartan1}.

\begin{prop}
Let $h_4 \ge h_5 \ge 0$.

A singular arc can adjoin a bang-bang arc only at  points $\lambda_{\bar{t}}$ that satisfy the following conditions:
\begin{itemize}
\item
$\theta = \frac{3\pi}{2}$, $h_3 = 0$, $0\le  h_5 \le h_4$\\
($h_1$-singular arc of type $(b)$ adjoins a bang-bang arc),
\item
$\theta = 0$, $h_3 = 0$, $0< h_5 = h_4$\\
($h_2$-singular arc of type $(b)$ adjoins a bang-bang arc).
\end{itemize}
\end{prop}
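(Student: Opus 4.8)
The plan is to analyze the point $\lambda_{\bar t}$ where a bang-bang arc meets a singular arc by combining continuity of the extremal lift with the explicit description of singular arcs from \cite{SFCartan1} (Theorems 3, 4) and the phase portraits of system \eqref{Hamtheta} worked out case by case in Section~\ref{sec:structure}. First I would observe that along an $h_1$-singular arc we have $h_1(\lambda_t)\equiv 0$, hence (differentiating $\dot h_1 = -s_2 h_3$ along the arc, with $s_2$ of constant sign since $h_2\neq 0$) also $h_3\equiv 0$ on the singular arc; similarly $h_2$-singular forces $h_3\equiv 0$. By continuity of $\lambda_t$, at the junction point $\bar t$ one must have $h_3(\lambda_{\bar t})=0$ together with $h_1(\lambda_{\bar t})=0$ (resp. $h_2(\lambda_{\bar t})=0$). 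In terms of the coordinate $\theta$, $h_1=0$ means $\theta=\frac{\pi}{2}$ (mod $\pi$) and $h_2=0$ means $\theta\in\{0,\pi\}$ (mod $2\pi$), and among these we must pick the ones that are \emph{corner points} of a level set $C_i$ — i.e. points where a bang-bang arc of the phase portrait actually terminates at $h_3=0$ rather than passing through transversally.

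Next I would go through the list of cases 1)--4) and, within each, the level sets $C_i$, and simply read off from the phase portraits (Figs.~\ref{fig:1)h3th}--\ref{fig:4)h3th}) and from the explicit switching tables which points $(\theta,h_3)$ with $h_3=0$, $h_1h_2=0$ are limit points of bang-bang trajectories. The Remark after Theorem~\ref{th:opt_low_en} already identifies $\theta=\frac{3\pi}{2}$, $h_3=0$ as the self-intersection/corner point of the figure-eight level sets $C_7$ (case 1) and $C_5$ (case 2), and also $\theta=0$, $h_3=0$ appears as a corner point of $C_3$ in case 3) ($h_4=h_5$). The key consistency check is whether the singular arc emanating from such a point is of the type claimed: at $\theta=\frac{3\pi}{2}$, i.e. $h_1=0$, $h_2=-1$, the singular continuation is $h_1$-singular, and one must verify it is of type (b) rather than (a) by comparing with the constraints in Theorems~3, 4~\cite{SFCartan1} (type (b) is the one compatible with the full range $0\le h_5\le h_4$). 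At $\theta=0$, i.e. $h_1=1$, $h_2=0$, the continuation is $h_2$-singular, and the phase portraits show this corner point exists only when $h_4=h_5$ (in cases 1) and 2) with $h_4>h_5$ the point $\theta=0$, $h_3=0$ is crossed transversally, as explicitly noted in Subsubsec.~``Case 1), level line $C_5$'' — ``The control $u_1(t)$ does not switch when $\theta=0$''), which forces $0<h_5=h_4$.

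Then I would rule out the other candidate points: $\theta=\frac{\pi}{2}$ with $h_3=0$ corresponds to $C_1$ in cases 1), 2), 3), which is a pure $h_1$-singular equilibrium and carries no adjoining bang-bang arc (the extremal there is not bang-bang); and transversal crossings of $\{h_3=0, h_1h_2=0\}$ by a bang-bang trajectory (as at $\theta=\pi$ in case 1), $C_3$, or $\theta=0$ in case 1), $C_5$) are not junction points at all, since the control does not switch there and the trajectory remains bang-bang on both sides. Assembling these observations case by case yields exactly the two listed possibilities.

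The main obstacle I expect is the bookkeeping: one must be careful to use the restriction to the fundamental domain $h_4\ge h_5\ge 0$ legitimately (the statement is phrased under this hypothesis, so this is fine), and one must correctly match each corner point of each level set to the \emph{type} (a) or (b) of the singular arc that continues from it, which requires invoking the precise parametrizations of the singular arcs in \cite{SFCartan1} rather than just the phase portraits of \eqref{Hamtheta}. In particular, distinguishing type (a) from type (b) at $\theta=\frac{3\pi}{2}$ for the full range $0\le h_5\le h_4$, and confirming that no $h_2$-singular arc of type (a) can adjoin a bang-bang arc, is the delicate point; everything else is a finite inspection of the figures and switching tables of Section~\ref{sec:structure}.
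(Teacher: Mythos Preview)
Your approach is correct and will yield the result, but it takes a longer route than the paper's. You reconstruct the junction conditions by an exhaustive sweep through cases 1)--4) and all level sets $C_i$, looking for corner points of bang-bang trajectories at $\{h_3=0,\ h_1h_2=0\}$. The paper instead argues directly from the classification of singular arcs in Theorems~3, 4 of \cite{SFCartan1}: type~(a) singular arcs satisfy $h_3=h_4=h_5=0$, and a glance at Subsection~\ref{subsec:4)} shows that on the set $\{h_4=h_5=0,\ h_3=0\}$ system~\eqref{Hamtheta} has only fixed points, so no bang-bang arc can reach such a point---type~(a) is eliminated in one stroke. For type~(b), the singular conditions already force $\theta\in\{\tfrac{\pi}{2},\tfrac{3\pi}{2}\}$ (resp.\ $\theta\in\{0,\pi\}$) with $h_3=0$; of each pair, one value is an equilibrium of the phase portrait and hence unreachable by a bang-bang arc, leaving exactly $\theta=\tfrac{3\pi}{2}$ and $\theta=0$.

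In short, what you flag as ``the delicate point''---separating type~(a) from type~(b)---is in fact the easy step once you recall that type~(a) forces $h_4=h_5=0$; this makes the paper's argument a few lines long. Your case-by-case inspection buys a self-contained argument that does not lean as heavily on the precise statements of Theorems~3, 4~\cite{SFCartan1}, at the cost of more bookkeeping. Two minor points: your reference to ``the Remark after Theorem~\ref{th:opt_low_en}'' is off---the corner points are identified in the descriptions of $C_7$ (case~1), $C_5$ (case~2), and $C_3$ (case~3), not in that Remark; and your derivation of $h_3\equiv 0$ on the singular arc from $\dot h_1=-s_2 h_3$ is perfectly valid and in fact more explicit than the paper, which simply cites the previous work.
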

\begin{proof}
Singular arcs of type (a) cannot adjoin bang-bang arcs since these singular arcs satisfy the equalities $h_3 = h_4 = h_5 = 0$ (see Th.~3, 4~\cite{SFCartan1}), but these equalities cannot hold on bang-bang extremals (see Subsec.~\ref{subsec:4)}).

$h_1$-singular arcs of type (b) satisfy the conditions:
$$ \theta = \frac{\pi}{2} + \pi n, \quad h_3 = 0, \quad 0< h_5 \le h_4.$$
The point $\theta = \frac{\pi}{2}, h_3 = 0$ is an equilibrium point of the phase portrait of the reduced Hamiltonian system of PMP, thus the equality $(\theta, h_3) = (\frac{\pi}{2}, 0)$ cannot hold on a bang-bang extremal.
Similarly, $h_2$-singular arc of type  (b) satisfies the conditions
$$ \theta = \pi n, \quad h_3 = 0, \quad h_4 = h_5 = 0,$$
and the equality $(\theta, h_3) = (\pi, 0)$ cannot hold on a bang-bang extremal.
\end{proof}

Notice that singular controls that adjoin bang-bang controls are constant. Thus all mixed controls are piecewise constant, and Th.~\ref{th:agr_gam} can be used for bounding the number of switchings on optimal mixed trajectories.

Mixed extremals are schematically shown in Figs.~\ref{fig:mix1)}-\ref{fig:mix3)}. Small dashed circles near the points $(\theta, h_3) = (\frac{3\pi}{2}, 0)$ and $(\theta, h_3) = (0, 0)$ denote singular arcs that adjoin
bang-bang arcs. Singular arcs are shown by dashed segments.

\twofiglabelsize
{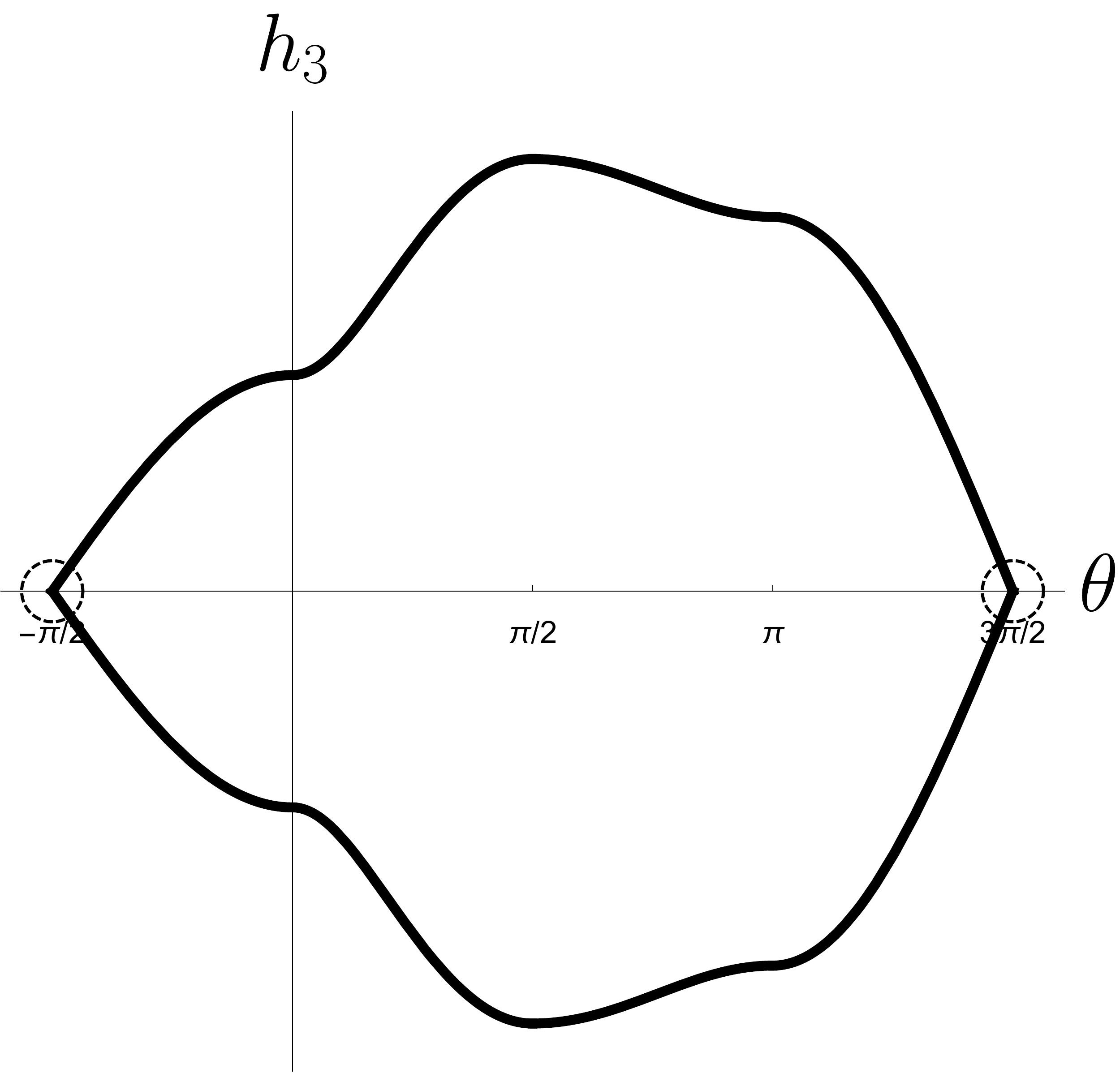}{Mixed extremals $(\theta(t), h_3(t))$ with $h_4 > h_5 > 0$}{fig:mix1)}{0.6}
{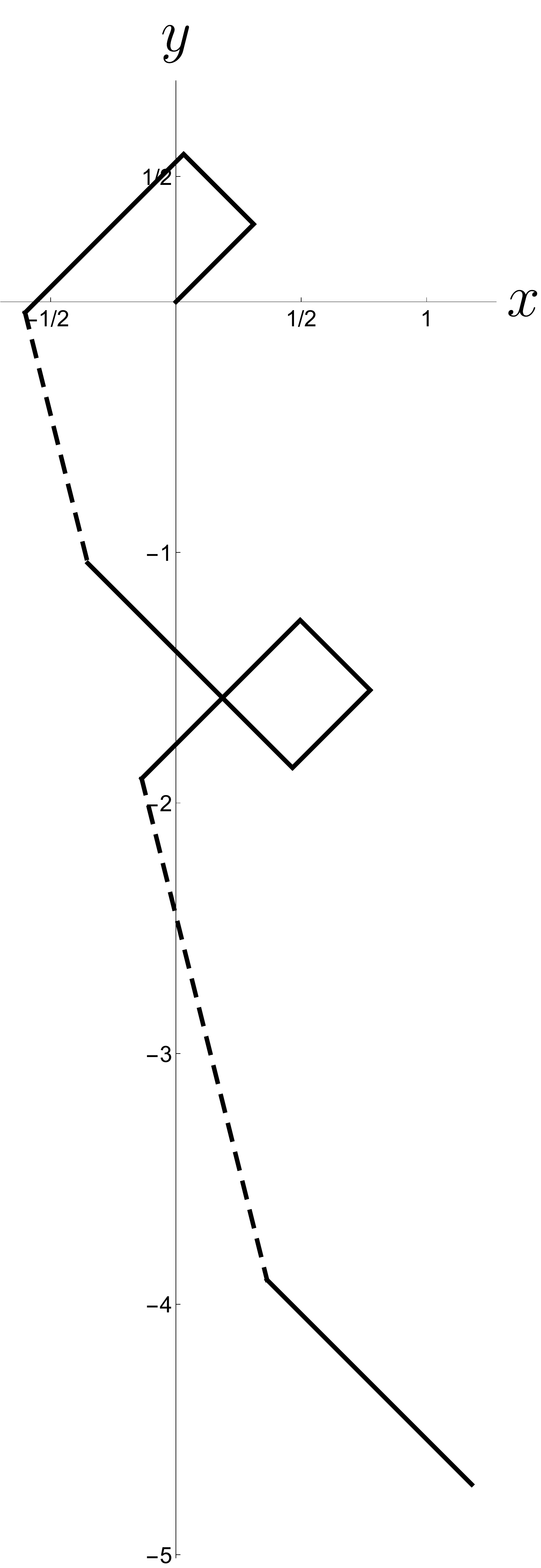}{Example of mixed trajectory $(x(t), y(t))$ with $h_4 > h_5 > 0$}{fig:xym1)}{0.4}

\twofiglabelsize
{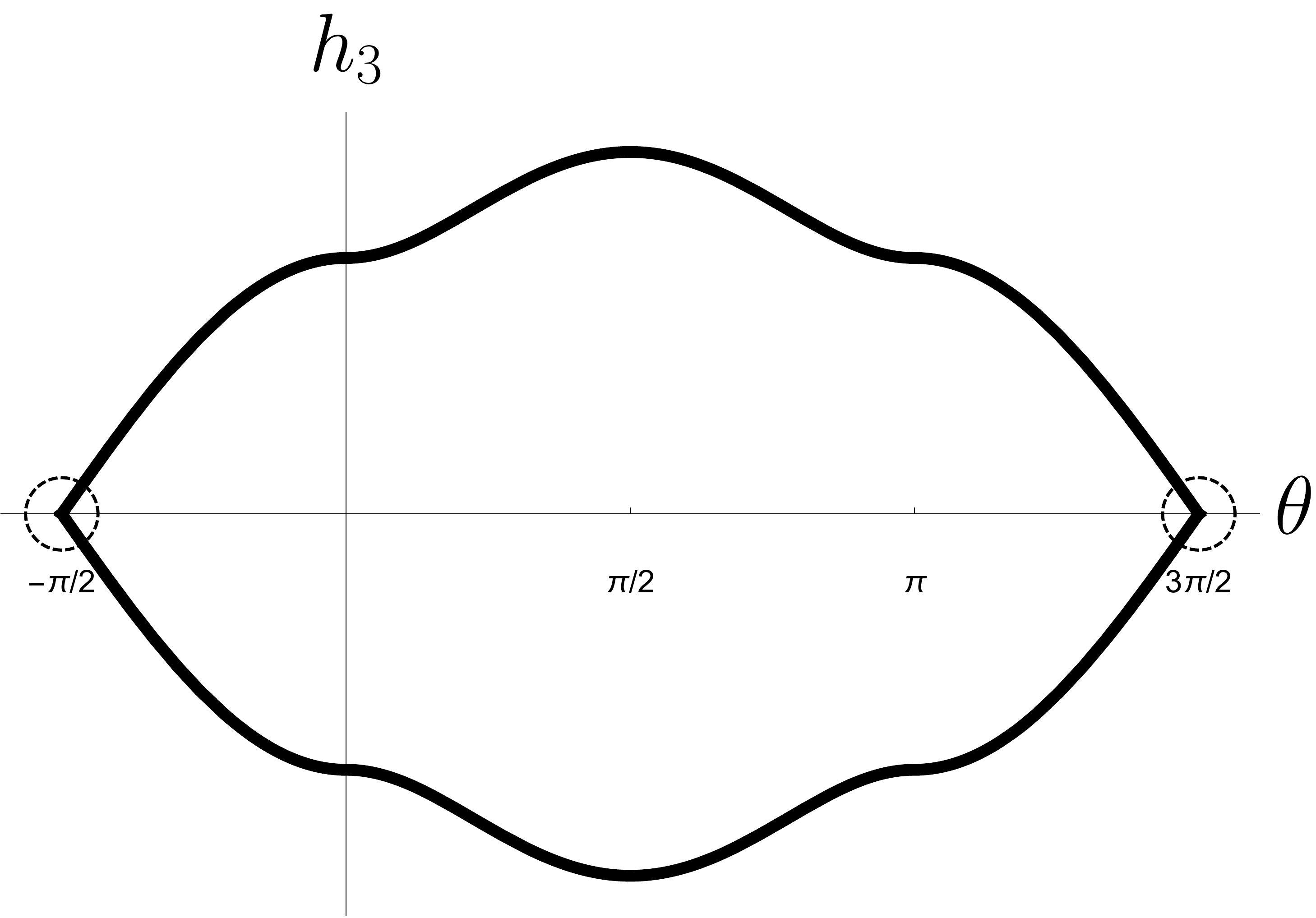}{Mixed extremals $(\theta(t), h_3(t))$  with $h_4 > h_5 = 0$}{fig:mix2)}{0.7}
{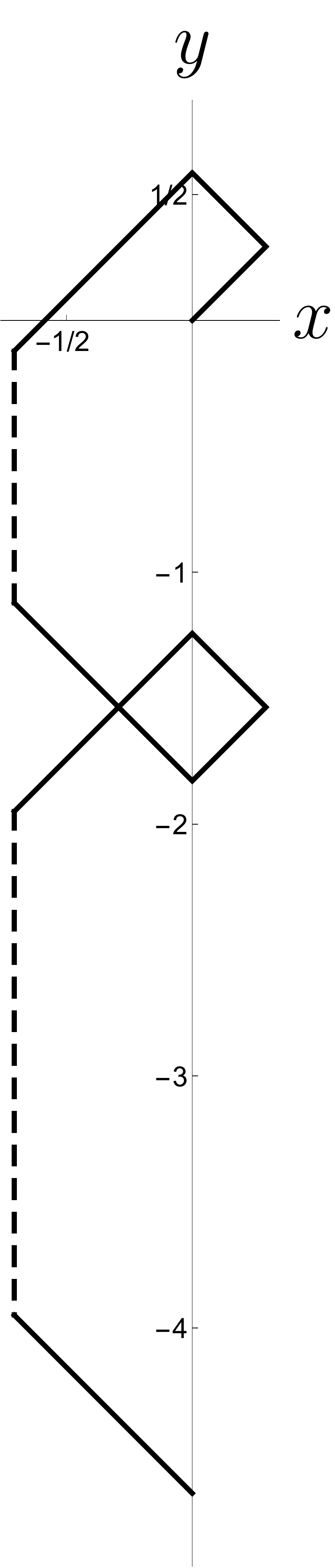}{Example of mixed trajectory $(x(t), y(t))$ with $h_4 > h_5 = 0$}{fig:xym2)}{0.2}

\twofiglabelsize
{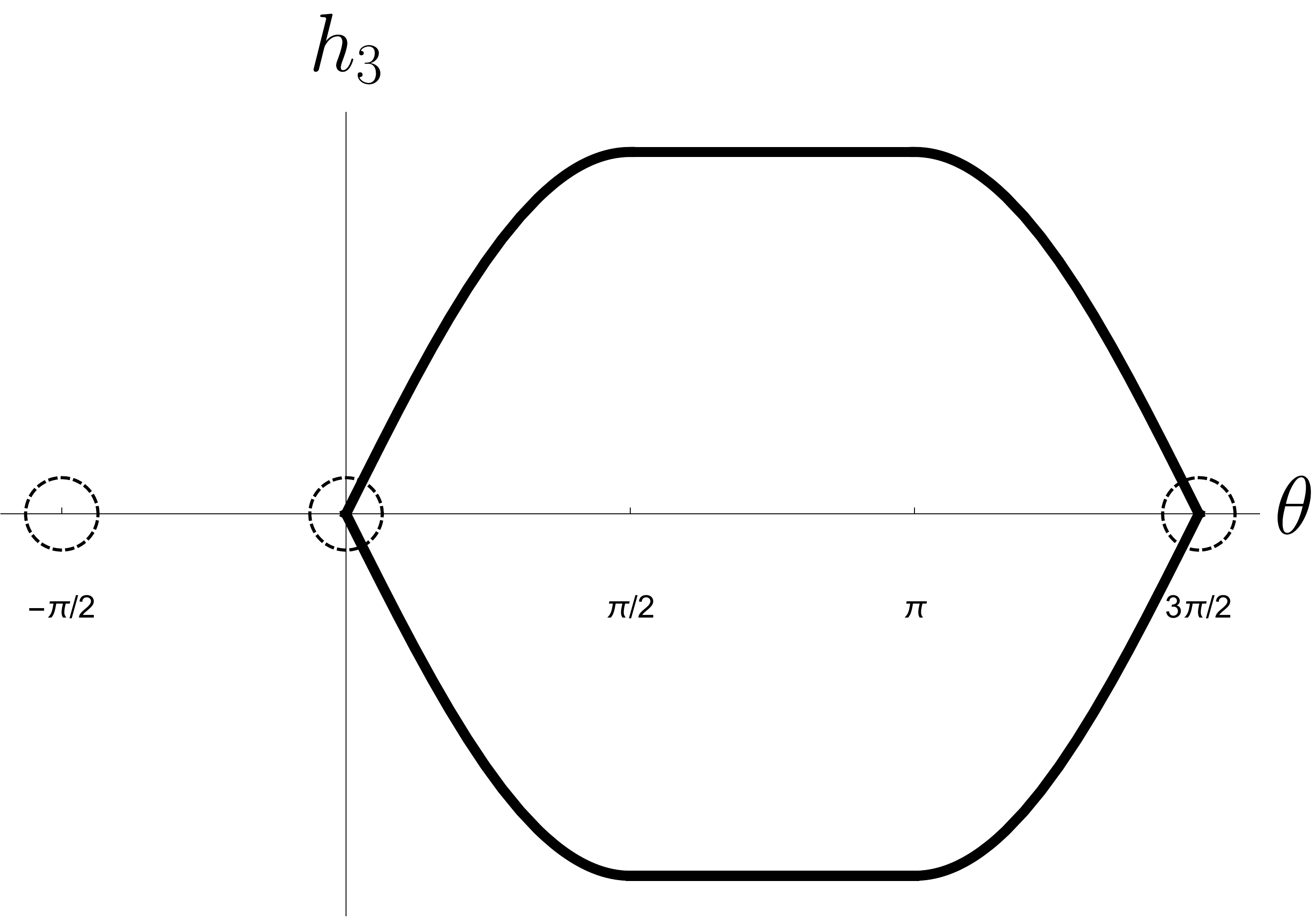}{Mixed extremals $(\theta(t), h_3(t))$  with $h_4 = h_5 > 0$}{fig:mix3)}{0.7}
{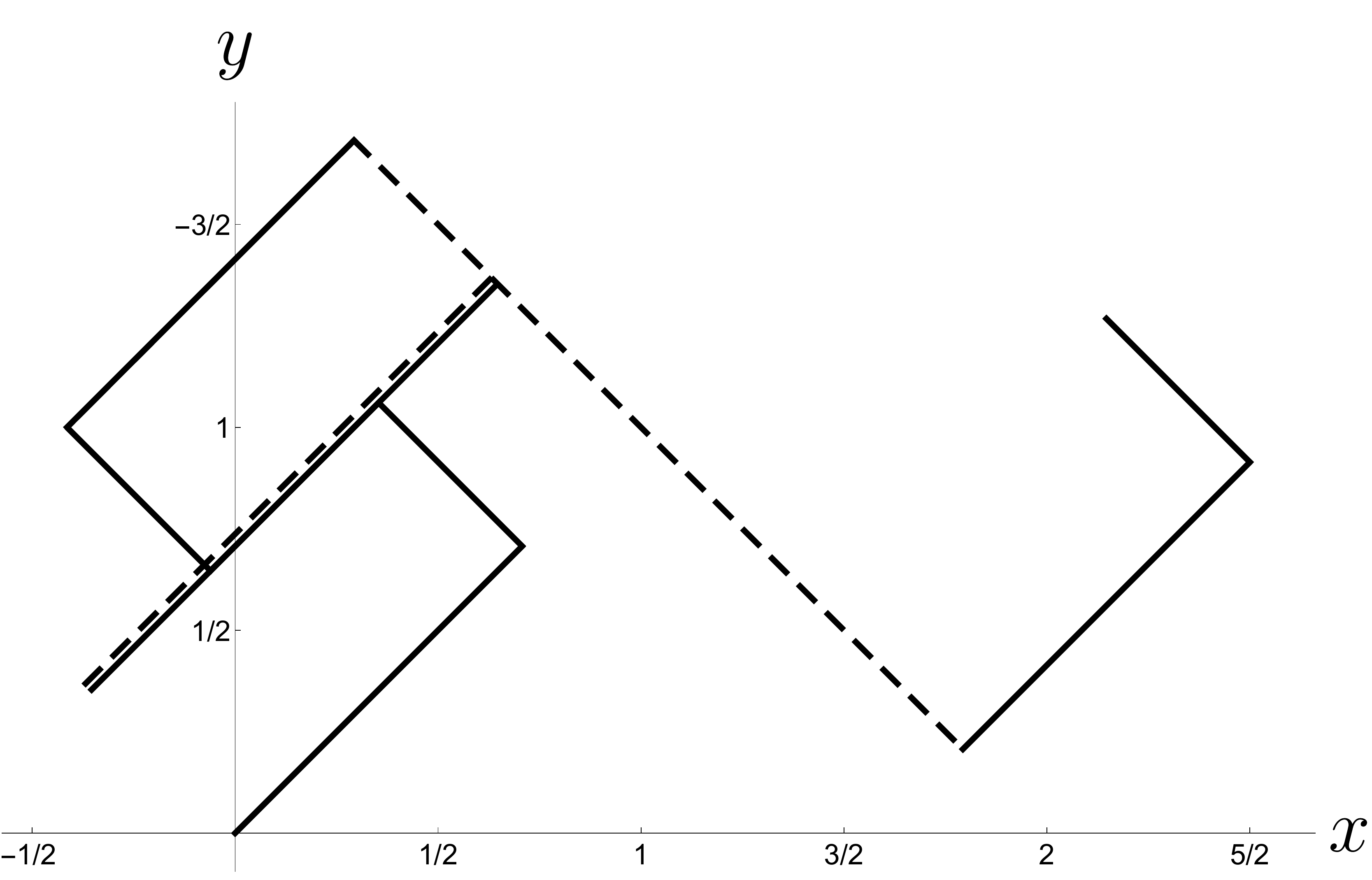}{Example of mixed trajectory $(x(t), y(t))$ with $h_4 = h_5 > 0$}{fig:xym3)}{0.8}


Notice that singular arcs, unlike bang-bang ones, may have arbitrary durations.

Mixed extremals for $h_4 \ge h_5 \ge 0$ arise in the following cases:
\begin{itemize}
\item Case 1): $h_4 > h_5 > 0$, $\theta = \frac{3\pi}{2}$, $h_3 = 0$, level line $C_6$, see Figs.~\ref{fig:mix1)}, \ref{fig:xym1)},
\item Case 2): $h_4 > h_5 = 0$, $\theta = \frac{3\pi}{2}$, $h_3 = 0$, level line $C_5$, see Figs.~\ref{fig:mix2)}, \ref{fig:xym2)},
\item Case 3): $h_4 = h_5 > 0$, $\theta \in \{ 0,\frac{3\pi}{2}\}$, $h_3 = 0$, level line $C_2$, see Figs.~\ref{fig:mix3)}, \ref{fig:xym3)}.
\end{itemize}

Theorem \ref{th:agr_gam} yields the following bound.
\begin{theorem}
\label{th:mix_bound}
Optimal mixed controls have not more that $13$ switchings.
\end{theorem}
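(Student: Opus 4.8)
The plan is to enumerate the possible shapes of a mixed extremal using the preceding analysis, and in each case apply the Agrachev–Gamkrelidze second-order test (Th.~\ref{th:agr_gam}) with a carefully chosen number $k$ of switchings, checking non-negative-semidefiniteness of $Q|_W$ via the minor test (Th.~\ref{th:gant}). By the Proposition above, a singular arc can only adjoin a bang-bang arc at one of two distinguished points of the phase cylinder, so a mixed extremal is a concatenation of (i) one or more bang-bang blocks whose switching structure is governed by the sequences listed in Section~\ref{sec:structure} (only the relevant level lines $C_6$ in Case~1, $C_5$ in Case~2, $C_2$ in Case~3 can occur, as recalled just before the theorem), and (ii) singular arcs of type (b), each of which is run with a constant control $(+,+)$ or $(-,-)$ (the direction matching the adjoining bang sequence) for an \emph{arbitrary} duration. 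In particular the whole control $u(\cdot)$ is piecewise constant, so Th.~\ref{th:agr_gam} is applicable verbatim.

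First I would fix, in the fundamental domain $h_4 \ge h_5 \ge 0$, the three admissible cases and within each the possible ``words'' in the bang controls that can be glued through a singular insertion at $\theta = \tfrac{3\pi}{2}$ (and, in Case~3, also at $\theta = 0$); the schematic Figures~\ref{fig:mix1)}--\ref{fig:mix3)} already organize this bookkeeping. The key observation that makes a \emph{uniform} bound possible is that a singular arc, being run with a control equal to one of the extreme vertices of $U$, acts on the Agrachev–Gamkrelidze construction exactly like an ordinary bang arc of the same (variable) length: it contributes one more index $i$, one more vector field $Y_i = \pm X_+$ (or $\pm X_-$), and the corresponding factor $e^{\pm(t_{i+1}-t_i)\ad Y_i}$ in the operators $P_i$. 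So a mixed control with $k$ switchings is handled by the same machinery as a bang-bang control with $k$ switchings, except that some of the interswitching durations are free parameters rather than the fixed $\tau_i(h_4,h_5,E)$.

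Next I would run the computation that proves $k = 13$ switchings is non-optimal, choosing $j$ (the base index at which $Q$ is evaluated) so that the singular block lies \emph{between} $Z_0$ and $Z_k$ and the resulting $2\times 2$ (or $3\times 3$) principal minor of the reduced form $Q|_W$ can be shown to have the ``wrong'' sign for all admissible values of the Casimirs and of the free singular durations. Concretely, as in the proof of Th.~\ref{th:1)I2}, one computes the $Z_i$ in the basis $(X_+,X_-,X_3,X_{++},X_{--})$, writes $\sigma_{il} = \langle \lambda_{t_j}, [Z_i,Z_l](q(t_j))\rangle$, restricts to $W$, and exhibits one principal minor whose sign violates Th.~\ref{th:gant}; the free singular durations enter polynomially and should only help (they can be sent to $0$ to recover a limiting bang-bang configuration whose $13$-switching non-optimality is already covered by Th.~\ref{th:gen_bound}, or chosen generically). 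Since there are only finitely many mixed ``words'' up to the symmetry group of the square, finitely many such minor computations suffice, and each reduces—after elimination of the $W$-constraints—to checking the sign of an explicit rational function of $h_4, h_5, E$ and the singular-arc lengths. Passing from the fundamental domain back to the whole $(h_4,h_5)$-plane via the symmetry group then yields the bound of $13$ switchings for all optimal mixed controls.

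The main obstacle I anticipate is purely computational rather than conceptual: unlike the single tidy case $C_2$ treated in Th.~\ref{th:1)I2}, a mixed extremal can string together bang blocks from \emph{different} portions of the phase portrait (e.g. a $C_6$-type block, then a singular insertion, then another $C_6$-type block traversed in the reversed order, as in Fig.~\ref{fig:mix1)}), so the list of words to check is longer and the $Z_i$'s are messier; keeping track of which principal minor to test, and verifying that its sign is robust as the free singular durations range over $(0,+\infty)$, is where the real work lies. A secondary point that needs care is the hypothesis of Th.~\ref{th:agr_gam} that $\lambda_\cdot$ be the \emph{unique} extremal lift up to scaling: on the mixed part this must be argued (the Remark after Th.~\ref{th:opt_low_en} shows corank can jump precisely on the low-energy singular locus, but the mixed extremals here live on $C_6$, $C_5$, $C_2$ with $E$ in the high range, where uniqueness of the lift should hold), and I would include a short lemma to that effect before invoking the quadratic-form test.
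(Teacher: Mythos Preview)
Your approach is exactly the paper's: the paper offers no detailed proof beyond the sentence ``Theorem~\ref{th:agr_gam} yields the following bound,'' so invoking the Agrachev--Gamkrelidze test case by case on the finitely many admissible mixed ``words'' is precisely what is intended, and your outline of how the singular arc slots into the $P_i$--$Z_i$ machinery is the right picture.

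Two small corrections. First, the constant control on a type-(b) singular arc is \emph{not} in general a vertex $\pm X_+$ or $\pm X_-$. At $(\theta,h_3)=(\tfrac{3\pi}{2},0)$ one has $u_2=-1$ and the singular condition $\dot h_3 = u_1 h_4 - h_5 = 0$ forces $u_1 = h_5/h_4$; thus in Case~1) the singular vector field is $Y_i = \tfrac{h_5}{h_4}X_1 - X_2$, in Case~2) it is $-X_2$, and only in Case~3) does it reduce to $X_-$. This does not affect the applicability of Th.~\ref{th:agr_gam} (which only requires $u^i\in U$ constant), but it does change the concrete $Z_i$'s and hence the minors you must check. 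Second, to conclude ``not more than $13$ switchings'' you must exhibit non-optimality at $k=14$, not $k=13$; your parenthetical appeal to Th.~\ref{th:gen_bound} as a limiting case is also off by one level line, since the mixed extremals live on $C_7$ (Case~1), $C_5$ (Case~2), $C_3$ (Case~3) --- the separatrix level sets --- rather than the open domains adjacent to them.
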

Notice that mixed extremals $\lambda_t$ are 
not uniquely determined by the initial covector $\lambda_0$ and time $t$, because of arbitrary duration of singular arcs.
 Thus exponential mapping cannot be defined for mixed extremals, as it was defined for bang-bang ones.

\section{Bound on the number of arcs of minimizers}
Important questions for applications of sub-Finsler geometry in metric group theory are the following: 
\begin{itemize}
\item
given any pair of points in a sub-Finsler manifold, does there exist a piecewise-smooth minimizer that connects these points?
\item
is there a uniform bound on the number of smooth arcs for piecewise smooth minimizers that connect arbitrary points in the manifold?
\end{itemize}

On the basis of our results we can provide affirmative answer for the both questions for the $\ell_{\infty}$ sub-Finsler problem studied in this paper.

\begin{corollary}
Any two points in the Cartan group can be connected by a piecewise smooth minimizer with not more than $14$ smooth arcs.
\end{corollary}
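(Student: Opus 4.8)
The plan is to deduce the Corollary from Theorem~\ref{th:result} together with the switching bounds established in the previous sections, after a left-invariance reduction. First, since $X_1,X_2$ and the norm $\|\cdot\|_\infty$ are left-invariant, it suffices to connect $\Id$ to an arbitrary $q_1\in M$: any minimizer from $q_0$ to $q_1$ is the left translate of a minimizer from $\Id$ to $q_0^{-1}q_1$, and left translation preserves piecewise smoothness and the number of smooth arcs. The Rashevsky-Chow and Filippov theorems (see Section~\ref{sec:problem}) provide a length-minimizer $q(\cdot)$ from $\Id$ to $q_1$. If $q(\cdot)$ admits only an abnormal lift, its control is constant, so $q(\cdot)$ is a single smooth arc and there is nothing to prove. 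Otherwise $q(\cdot)$ has a normal lift, and by Theorem~\ref{th:gen_controls} it is of type (i), (ii), or (iii) in the sense of Theorem~\ref{th:result}.

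The types (ii) and (iii) are settled directly by the quantitative theorems. If $q(\cdot)$ is bang-bang but not of type (i), then the energy must satisfy $E>\max(-|h_4|,-|h_5|)$: indeed, by the computation inside the proof of Theorem~\ref{th:opt_low_en}, the complementary range of $E$ forces one control component to be identically $\pm1$, which is exactly type (i). Hence Theorem~\ref{th:gen_bound} gives at most $11$ switchings, i.e., at most $12\le14$ smooth arcs. If $q(\cdot)$ is mixed, its control is piecewise constant and Theorem~\ref{th:mix_bound} gives at most $13$ switchings, i.e., at most $14$ smooth arcs.

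The remaining, and genuinely delicate, case is type (i): one component of the control, say $u_2$, is identically $\pm1$. One cannot bound the number of arcs of $q(\cdot)$ itself, because such trajectories are optimal for all time ($t_{\cut}=+\infty$ in the low-energy range), carry arbitrarily many switchings of $u_1$, and their endpoint is in general reached by several linearly independent extremals (cf.\ the Remark after Theorem~\ref{th:opt_low_en}). Instead one must exhibit a \emph{different} minimizer between $\Id$ and $q_1$ with few arcs. The approach I would take is to freeze $u_2\equiv1$, express the endpoint of $\dot q=u_1X_1+X_2$ explicitly through the Cartan group law as a function of the sign pattern and the switching times of $u_1$, and then invoke the explicit semi-algebraic description of the fixed-time attainable set along type-(i) (singular) trajectories from \cite{SFCartan1} to show that the controls with $u_2\equiv1$ and at most four switchings of $u_1$ --- that is, at most five smooth arcs --- already exhaust that attainable set at every fixed time. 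This is precisely the last assertion of Theorem~\ref{th:result}, and it yields a piecewise-smooth minimizer from $\Id$ to $q_1$ with at most $5\le14$ smooth arcs.

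Combining the cases, every pair of points of the Cartan group is joined by a piecewise-smooth minimizer with at most $14$ smooth arcs, which is the assertion. I expect the type-(i) step to be the main obstacle: for bang-bang and mixed minimizers the arc bound is immediate from the switching theorems, whereas for type-(i) trajectories the number of arcs is genuinely unbounded, so one must \emph{replace} the given minimizer by a combinatorially short one --- which requires a surjectivity (moment-problem / Chebyshev-system) argument for the endpoint map with one control frozen, effectively a reduction to a low-dimensional problem in the $(x,y)$-plane.
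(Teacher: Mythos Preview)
Your argument is essentially the paper's own proof: split into abnormal, singular (type~(i)), bang-bang, and mixed, then invoke Theorems~\ref{th:gen_bound} and~\ref{th:mix_bound} for the last two cases. The only difference is that for type~(i) you propose to re-derive the $5$-arc surjectivity statement from scratch, whereas the paper simply cites it as Prop.~1 of~\cite{SFCartan1}; since that proposition is already available (and is exactly the ``last assertion of Theorem~\ref{th:result}'' you point to), there is no need for the moment-problem argument you sketch, and the perceived ``main obstacle'' disappears.
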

\begin{proof}
By Th.~\ref{th:gen_controls},
any two points in the Cartan group can be connected by a minimizer that belongs to the following (mutually not excluding) classes:
\begin{enumerate}
\item
abnormal,
\item
singular,
\item
bang-bang,
\item
mixed.
\end{enumerate}

Abnormal trajectories are smooth (see Th.~2~\cite{SFCartan1}).
If two points can be connected by a singular trajectory, then they can be connected by a piecewise smooth singular trajectory with not more than 5 smooth arcs (see Prop.~1~\cite{SFCartan1}).
Bang-bang minimizers are piecewise smooth with up to 12 smooth arcs (see Th.~\ref{th:gen_bound}).
Finally, mixed minimizers are piecewise smooth with up to 14 smooth arcs (see Th.~\ref{th:mix_bound}).
\end{proof}

Moreover, we can now prove Th.~\ref{th:result} as a corollary of previously obtained results.
\begin{proof}
Classification of minimizers into types $(i)$--$(iii)$ follows from Th.~\ref{th:gen_controls}.
The bound on the number of switchings on minimizers of type $(ii)$ and not of type $(i)$ is given by Th.~\ref{th:gen_bound};
a similar bound for type $(iii)$ is obtained by Th.~\ref{th:mix_bound}. The length-minimizing property of trajectories of type $(i)$ follows from Lemma~2~\cite{SFCartan1}. Existence of a piecewise smooth minimizer with up to 5 smooth arcs for every trajectory of type $(i)$ follows from Prop.~1~\cite{SFCartan1}.
\end{proof}

\section{Conclusion}
In this paper we continued a study of the $\ell_{\infty}$ sub-Finsler problem on the Cartan group. Many questions remain unsolved, e.g.:
\begin{itemize}
\item
cut time along  bang-bang and mixed trajectories,
\item
cut locus,
\item
regularity of sub-Finsler distance and sphere.
\end{itemize}
We postpone study of these questions to forthcoming papers.

\section*{Acknowledgments}

The authors thank Prof. Andrei Agrachev and Prof. Lev Lokutsievsky for fruitful discussions of sub-Finsler geometry.

\end{document}